\setlist[enumerate]{label=\alph*), itemsep=.5em, topsep=2pt}
\numberwithin{equation}{section}
\numberwithin{figure}{section}
\theoremstyle{definition}
    \newtheorem{definition}{Definition}[section]
    \newtheorem*{definition*}{Definition}
\theoremstyle{plain}
    \newtheorem{maintheorem}{Theorem}
    \newaliascnt{theorem}{definition}
    \newtheorem{theorem}[theorem]{Theorem}
    \newtheorem*{theorem*}{Theorem}
    \newaliascnt{proposition}{definition}
    \newtheorem{proposition}[proposition]{Proposition}
    \newtheorem*{proposition*}{Proposition}
    \newaliascnt{corollary}{definition}
    \newtheorem{corollary}[corollary]{Corollary}
    \newtheorem*{corollary*}{Corollary}
    \newaliascnt{lemma}{definition}
    \newtheorem{lemma}[lemma]{Lemma}
    \newtheorem*{lemma*}{Lemma}
\theoremstyle{remark}
\renewenvironment{proof}[1][\proofname]{\par
  \pushQED{\qed}%
  \normalfont 
  \topsep0pt \partopsep0pt 
  \vspace{-0.5em}
  \trivlist
  \item[\hskip\labelsep
        \itshape
    #1\@addpunct{.}]\ignorespaces
}{%
  \popQED\endtrivlist\@endpefalse
}
\newcommand{\IR}{\mathbbm{R}}
\newcommand{\IN}{\mathbbm{N}}
\renewcommand{\AA}{\mathfrak{A}}    
\renewcommand{\O}{\mathcal{O}}      
\newcommand{\m}{\mathfrak{m}}       
\renewcommand{\i}{\mathbf{i}}       
\renewcommand{\div}{\operatorname{div}}
\newcommand{\Per}{\operatorname{Per}}
\DeclareMathOperator{\ind}{\varchi}
\DeclareRobustCommand{\varchi}{{\mathpalette\ivarchi\relax}}
\newcommand{\ivarchi}[2]{\raisebox{\depth}{$#1\chi$}}
\DeclareMathOperator{\vol}{vol}
\newcommand{\lowoverline}[1]{\overline{\smash{#1}\raisebox{0.44\ht\strutbox}{}}\vphantom{#1}}
\newcommand{\voln}{\lowoverline{\mathrm{vol}}}
\newcommand{\di}{\mathop{}\mathopen{}\mathrm{d}}
\newcommand{\eps}{\varepsilon}
\newcommand{\filll}{\ \cdot\ }                  
\newcommand{\ubr}[3]{\underbrace{#1}_{#2 \mathrlap{#3}}}
\newcommand{\crit}{{\mathrm{crit}}}             
\DeclarePairedDelimiter{\norm}{\lVert}{\rVert}
\newcommand{\qand}{\quad\text{and}\quad}
\newcommand{\gd}[1]{G_{#1}}           
\newcommand{\bessel}{\mathcal{K}}     
\title[Riemannian Autocorrelation and Non-Local Isoperimetric Energies]{A Riemannian Autocorrelation Function and its Application to Non-Local Isoperimetric Energies}
\date{\today}
\author[M. Bleher]{Michael Bleher}
\address[M. Bleher]{Department of Mathematics,
Heidelberg University,
Im Neuenheimer Feld 205,
D-69120 Heidelberg, Germany}
\email{mbleher@mathi.uni-heidelberg.de}
\author[D. Brazke]{Denis Brazke}
\address[D. Brazke]{Okinawa Institute of Science and Technology, Analysis and Partial Differential Equations Unit, 1919-1 Tancha, Onna-son, 904-0495, Okinawa, Japan.}
\email{denis.brazke@oist.jp}
\author[S. Nill]{Sebastian Nill}
\address[S. Nill]{Department of Mathematics,
Heidelberg University,
Im Neuenheimer Feld 205,
D-69120 Heidelberg, Germany}
\email{snill@mathi.uni-heidelberg.de}
\begin{document}

\markboth{\textsc{M. Bleher, D. Brazke, and S. Nill}}{\textsc{M. Bleher, D. Brazke, and S. Nill}}

\begin{abstract}
We study a family of non-local isoperimetric energies $E_{\gamma,\eps}$ on the round sphere $M = S^n$, where the non-local interaction kernel $K_\eps$ is the fundamental solution of the Helmholtz operator $1 - \eps^2 \Delta$.
To analyse these energies, we introduce a Riemannian autocorrelation function $c_\Omega$ associated to a measurable set $\Omega\subset M$, defined on any compact, connected, oriented Riemannian manifold without boundary $(M^n,g)$ of dimension $n\ge2$.
This function is intimately linked to Matheron's set covariogram from convex geometry.
By establishing a characterisation of functions of bounded variation $BV(M)$ in terms of geodesic difference quotients, we show that $\Omega$ has finite perimeter if and only if $c_\Omega$ is Lipschitz, and we relate the Lipschitz constant to the perimeter of $\Omega$.
We show that on the round sphere $E_{\gamma,\eps}$ admits a reformulation in terms of $c_\Omega$, which allows us to compute the limit as $\eps \to 0$ in a variational sense, that is, in the framework of $\Gamma$-convergence.
\end{abstract}

\maketitle

\section{Introduction}

Let $(M^n,g)$ be a compact, connected, oriented Riemannian manifold without boundary of dimension~$n\ge2$ and $\Omega\subset M$ a measurable subset.
In this article, we are interested in non-local isoperimetric energies of the form
\begin{align}\label{eq:energy_intro}
    E_{\gamma,\eps}(\Omega) \coloneqq \Per(\Omega) - \frac{\gamma}{\eps} \int_{M \times M} K_\eps(x,y) \, |\ind_\Omega(x) - \ind_\Omega(y)| \, \di x \, \di y,
\end{align}
where $\Per(\Omega)$ denotes the variational perimeter (see \Cref{def:total_variation}), $\gamma, \eps > 0$, and $K_\eps$ is a suitable interaction kernel.
This energy provides a sharp interface model for pattern formation in biological membranes and has previously been studied in the flat case \cite{BrKnMC:2023}.
However, many pattern formation phenomena occur not on flat domains, but on curved surfaces (see e.g. the survey \cite{BowGio:2009}).
This motivates generalising the analytical tools developed in these works to study non-local isoperimetric problems on curved geometries.

In the Euclidean case, the non-local term in~\eqref{eq:energy_intro} can be reformulated in terms of the \emph{autocorrelation function} \cite{KnuShi:2023}, defined by
\begin{align}\label{eq:eucl_autocorrelation}
    c_\Omega^{\mathrm{Eucl}}(r) = \frac1{\sigma_{n-1}} \int_{\substack{ w \in \IR^n \\ \norm{w}=1}} \int_{\IR^n} \ind_\Omega(x) \ind_\Omega(x+rw) \, \di x \, \di w ,
\end{align}
where $\sigma_{n-1}$ is the volume of the sphere $S^{n-1} = \{\norm{w}=1\}$.
This function measures the average overlap between $\Omega$ and its translates of distance $r$.
The autocorrelation function is the average of the \emph{covariogram}\footnote{Throughout, we reserve the term autocorrelation for the symmetrised version and covariogram for Matheron's classical object. In the literature, these terms are often used interchangeably.} $C_\Omega^{\text{Eucl}}: h \longmapsto \vol(\Omega \cap (\Omega + h))$ over all translations $h\in\IR^n$ with fixed length $r$.
The covariogram was introduced by Matheron~\cite{Matheron:1986} and is a classical object in stochastic geometry.
For a detailed discussion of the covariogram, we refer to the overview \cite{Bianch:2023}.
Our key insight is that the autocorrelation function admits a generalisation to Riemannian manifolds by replacing translations with the geodesic flow, and that the non-local term in \eqref{eq:energy_intro} can then still be reformulated in terms of this Riemannian autocorrelation function.

Let $\Phi_r$ denote the geodesic flow on the unit tangent bundle $SM$ and write $\widehat{\Omega} \subset SM$ for the preimage of $\Omega \subset M$ under the bundle projection $\pi:SM\to M$.
We define the \emph{Riemannian autocorrelation function} $c_\Omega \colon \IR \longrightarrow \IR$ by
\begin{align}
    c_\Omega(r)
    \coloneqq \frac1{\sigma_{n-1}}\int_{SM} \ind_{\widehat{\Omega}}(\theta) \, \ind_{\widehat{\Omega}}(\Phi_r(\theta)) \, \di\theta,
\end{align}
where the integral is with respect to the Liouville measure.
Equivalently, the autocorrelation function can be written as
\begin{align}
    c_\Omega(r) = \frac1{\sigma_{n-1}}\int_M \int_{\substack{ w \in T_x M \\ \norm{w}_{g_x} = 1 }} \ind_\Omega(x) \ind_\Omega(\exp_x(rw)) \, \di w \, \di x,
\end{align}
which directly generalises the Euclidean formula~\eqref{eq:eucl_autocorrelation}, except that the order of integration has been reversed.

In the Euclidean case, a result due to Galerne \cite{Galerne:2011} establishes that the autocorrelation function encodes the perimeter:
If $\Omega \subset \IR^n$ has finite perimeter, then $c_\Omega^{\mathrm{Eucl}}$ is Lipschitz and its right-sided derivative at $r=0$ is proportional to $\Per(\Omega)$.
Our first main result, which relies on a result of Kreuml and Mordhorst \cite{KreMor:2019}, generalises this characterisation to Riemannian manifolds (this is \Cref{thm:autocorrelation_function_Lipschitz} below).

\begin{maintheorem}\label{mainthm:A}
    Let $(M^n,g)$ be a compact, connected, oriented Riemannian manifold without boundary of dimension $n\ge2$ and $\Omega \subset M$ measurable.
    Then the following are equivalent:
    \begin{enumerate}
        \item $\Omega$ is a set of finite perimeter;
        \item The autocorrelation function $c_\Omega$ is Lipschitz continuous.
    \end{enumerate}
    In that case, the right-sided derivative at $r=0$ is
    \begin{align}
        c'_\Omega(0) = - \frac{k_n}2 \Per(\Omega),
    \end{align}
    and coincides with the Lipschitz constant, i.e. $\|c'_\Omega\|_{L^\infty(\IR_{\ge0})} = \frac{k_n}2 \Per(\Omega)$.
\end{maintheorem}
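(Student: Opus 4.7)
The plan is to reduce the statement to the paper's $BV(M)$--characterisation by geodesic difference quotients, which itself builds on Kreuml--Mordhorst \cite{KreMor:2019}. The first step is an identity for $c_\Omega(0) - c_\Omega(r)$. Since the Liouville measure on $SM$ is invariant under the geodesic flow $\Phi_r$ and the integrand $\ind_{\widehat\Omega}$ is $\{0,1\}$--valued, a polarisation computation gives
\begin{align*}
    c_\Omega(0) - c_\Omega(r) = \tfrac{1}{2\sigma_{n-1}} \int_{SM} \bigl|\ind_{\widehat\Omega}(\Phi_r\theta) - \ind_{\widehat\Omega}(\theta)\bigr|^2 \di\theta = \tfrac{1}{2\sigma_{n-1}} \int_{SM} \bigl|\ind_{\widehat\Omega}(\Phi_r\theta) - \ind_{\widehat\Omega}(\theta)\bigr| \di\theta,
\end{align*}
where the second equality uses $|a-b|^2 = |a-b|$ for $\{0,1\}$--valued functions. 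Disintegrating along the bundle projection $\pi\colon SM\to M$ and rewriting $\Phi_r$ via the exponential map, this equals $\tfrac{1}{2\sigma_{n-1}} I(r)$ where
\begin{align*}
    I(r) \coloneqq \int_M \int_{w\in S_xM} \bigl|\ind_\Omega(\exp_x(rw)) - \ind_\Omega(x)\bigr| \di w \di x.
\end{align*}

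The second step is a Lipschitz-type estimate. Using the semigroup property $\Phi_r = \Phi_{r-s}\circ\Phi_s$ together with $\Phi_s$--invariance of the Liouville measure, the same manipulation yields, for all $r,s\ge 0$,
\begin{align*}
    |c_\Omega(r) - c_\Omega(s)| \le \tfrac{1}{\sigma_{n-1}} I(|r-s|).
\end{align*}
Hence the Lipschitz modulus of $c_\Omega$ on $\IR_{\ge 0}$ is controlled by $\sup_{h>0} I(h)/h$, while the one--sided difference quotient at $0$ (from the first step) bounds this supremum from below. The entire claim therefore reduces to the behaviour of $I(r)/r$.

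The third step invokes the $BV$--characterisation advertised in the abstract: a measurable $\Omega\subset M$ has finite perimeter if and only if $\limsup_{r\to0^+} I(r)/r < \infty$, in which case $\lim_{r\to 0^+} I(r)/r = k_n\,\Per(\Omega)$. This instantly gives the equivalence of (a) and (b); the right-sided derivative $c'_\Omega(0) = -\tfrac{k_n}{2}\Per(\Omega)$; and, combined with the Lipschitz estimate, $\|c'_\Omega\|_{L^\infty(\IR_{\ge0})} = \tfrac{k_n}{2}\Per(\Omega)$, the upper bound coming from the Lipschitz estimate and the matching equality being attained in the limit at $0$.

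The hard part of the overall argument lies not in this reduction but in the underlying $BV$--characterisation. In Euclidean space, the analogous statement reduces by convexity and translation--invariance to a one-dimensional slicing; on a manifold, one must control the Jacobian of $\exp_x$, deal with the cut locus, and verify that the leading constant $k_n$ is purely dimensional with curvature entering only in lower-order corrections as $r\to 0^+$. Once that foundation is in place, the proof of \Cref{mainthm:A} collapses to the polarisation identity and semigroup estimate above.
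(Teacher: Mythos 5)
Your reduction is in essence the paper's own: your polarisation identity is exactly Matheron's formula (\Cref{lemma:matheron}), and the ``$BV$--characterisation'' you invoke is \Cref{thm:variation_as_limit_of_difference_quotient}, i.e.\ the Kreuml--Mordhorst-based statement $\lim_{r\to0^+}G_{\ind_\Omega}(r)/r=k_n\Per(\Omega)$, which the paper also treats as the real work. However, the way you bridge from that limit statement to the conclusions of the theorem has concrete gaps. First, your Step 2 estimate $|c_\Omega(r)-c_\Omega(s)|\le\tfrac1{\sigma_{n-1}}I(|r-s|)=G_{\ind_\Omega}(|r-s|)$ is off by a factor $2$ from what is needed: the sharp statement $\|c'_\Omega\|_{L^\infty(\IR_{\ge0})}=\tfrac{k_n}2\Per(\Omega)$ requires the bound $|c_\Omega(r)-c_\Omega(s)|\le c_\Omega(0)-c_\Omega(r-s)=\tfrac12 G_{\ind_\Omega}(|r-s|)$ (\Cref{prop:autocorrelation_function}d, obtained from Matheron plus subadditivity of $G_{\ind_\Omega}$, or directly by noting that the positive and negative parts of $\ind_{\widehat\Omega}(\theta^r)-\ind_{\widehat\Omega}(\theta^s)$ have equal Liouville mass). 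With your estimate you can only conclude $\tfrac{k_n}2\Per(\Omega)\le\|c'_\Omega\|_{L^\infty}\le k_n\Per(\Omega)$, so the claimed equality is not proved. Second, you control the Lipschitz modulus by $\sup_{h>0}I(h)/h$ but then only use information about the limit as $h\to0^+$; the remark that the difference quotient at $0$ ``bounds this supremum from below'' is the trivial direction and useless for the upper bound you need. The missing ingredient is either $\sup_{h>0}I(h)/h=\lim_{h\to0^+}I(h)/h$ (the paper's \Cref{prop:geodesic_quotient_limit}, proved via integer contraction coming from subadditivity and continuity of $G_{\ind_\Omega}$), or the pointwise bound $Q_f(r)\le k_nV[f]$ for all $r>0$ (\Cref{prop:upper_bound_of_upper_limit_of_Q}, via smooth approximation), or alternatively a Rademacher-type argument bounding $|c'_\Omega(r)|$ a.e.\ by $\liminf_{h\to0^+}\tfrac12 G_{\ind_\Omega}(h)/h$ as in the proof of \Cref{thm:geodesic_variation_Lipschitz}; none of these appears in your proposal.

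Two smaller points: with your unnormalised inner integral, $I(r)=\sigma_{n-1}G_{\ind_\Omega}(r)$, so the correct limit is $\lim_{r\to0^+}I(r)/r=\sigma_{n-1}k_n\Per(\Omega)$; as you state it ($k_n\Per(\Omega)$), your own first step would give $c'_\Omega(0)=-\tfrac{k_n}{2\sigma_{n-1}}\Per(\Omega)$, so the normalisation needs fixing. Also, the implication that finite perimeter yields \emph{global} Lipschitz continuity of $c_\Omega$ is asserted as ``instant'', but from the limit at $0$ alone you need either the uniform bound $c_\Omega(0)-c_\Omega(r)\le|\Omega|$ away from $0$ together with the local estimate, or again the sup-equals-limit fact; this is easy but should be said.
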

The proportionality constant $k_n$ is exactly the same as in the flat case and depends only on dimension; the curved geometry enters only through the perimeter $\Per(\Omega)$, which depends on the Riemannian volume form.

As an application, we specialise to the round sphere $M = S^n$ and take $K_\eps$ to be the Helmholtz kernel, i.e. the fundamental solution of the Helmholtz operator $1 - \eps^2\Delta$ (cf. \cite{FHLZ:2016}).
The parameter $\eps$ can be viewed as a screening length controlling the interaction range. 
We study the short-range regime $\eps\to 0$, in which $K_\eps$ concentrates near the diagonal. 
In this regime, the non-local term in \eqref{eq:energy_intro} can be expressed in terms of the Riemannian autocorrelation function $c_\Omega$. 
Combined with \Cref{mainthm:A}, we obtain a uniform Lipschitz control that identifies the leading-order effect of the interaction term.
This leads to a critical interaction strength $\gamma_{\crit}$ and, in particular, to a localisation result in the subcritical regime.
Our second main result makes this localisation precise by identifying the $\Gamma$-limit of $E_{\gamma,\eps}$ as $\eps \to 0$ (this is \Cref{thm:gamma_convergence} below).
\begin{maintheorem}
    Let $M=S^n$ be the round sphere and $0 < \gamma < \gamma_{\crit}$.
    Then $E_{\gamma,\eps}$ $\Gamma$-converges in the $L^1$-topology to the functional
    \begin{align}
        E_{\gamma,0}(\Omega) = \left\{ \begin{array}{lll}
            (1 - \frac{\gamma}{\gamma_{\crit}}) \Per(\Omega)  && \quad \text{if } \Omega \text{ has finite perimeter}, \\[6pt]
            +\infty  && \quad \text{otherwise.}
        \end{array}\right.
    \end{align}
\end{maintheorem}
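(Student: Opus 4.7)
The plan is to exploit the radial structure of the Helmholtz kernel on the round sphere to rewrite the non-local term of $E_{\gamma,\eps}$ entirely in terms of the Riemannian autocorrelation function, and then feed the sharp Lipschitz control from \Cref{mainthm:A} into both halves of the $\Gamma$-convergence. Because $K_\eps(x,y) = k_\eps(d(x,y))$ depends only on geodesic distance and $S^n$ has injectivity radius equal to its diameter $\pi$, Fubini in polar coordinates (with volume element $\sin^{n-1}(r)\,\di r\,\di w$) together with the identity $|\ind_\Omega(x) - \ind_\Omega(y)| = \ind_\Omega(x) + \ind_\Omega(y) - 2\ind_\Omega(x)\ind_\Omega(y)$ and the normalisation $\int_M K_\eps(x,y)\,\di y = 1$ of the fundamental solution produces the key reformulation
\begin{align}
    \frac{1}{\eps} \int_{M\times M} K_\eps(x,y)\, |\ind_\Omega(x) - \ind_\Omega(y)| \,\di x\,\di y = \frac{2\sigma_{n-1}}{\eps} \int_0^\pi k_\eps(r)\, \sin^{n-1}(r)\, \bigl(c_\Omega(0) - c_\Omega(r)\bigr)\,\di r .
\end{align}
Setting $\mu_\eps := \tfrac{k_n \sigma_{n-1}}{\eps} \int_0^\pi k_\eps(r) \sin^{n-1}(r)\, r\, \di r$, the critical parameter is $\gamma_{\crit} := \bigl(\lim_{\eps\to 0}\mu_\eps\bigr)^{-1}$, with the limit finite and positive by the concentration of the Helmholtz kernel near the diagonal.

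For the $\Gamma$-liminf inequality, fix $\Omega_\eps \to \Omega$ in $L^1$ with $\liminf_\eps E_{\gamma,\eps}(\Omega_\eps) < \infty$. The Lipschitz estimate $c_{\Omega_\eps}(0) - c_{\Omega_\eps}(r) \le \tfrac{k_n}{2}\Per(\Omega_\eps)\, r$ from \Cref{mainthm:A} inserted into the reformulation immediately gives
\begin{align}
    E_{\gamma,\eps}(\Omega_\eps) \ge (1 - \gamma\, \mu_\eps)\,\Per(\Omega_\eps).
\end{align}
Since $\gamma < \gamma_{\crit}$, the prefactor $(1-\gamma\mu_\eps)$ is eventually bounded away from zero, so $\Per(\Omega_\eps)$ stays bounded along the sequence; $L^1$-lower semicontinuity of the perimeter then yields that $\Omega$ has finite perimeter and $\liminf_{\eps\to 0} E_{\gamma,\eps}(\Omega_\eps) \ge (1-\gamma/\gamma_{\crit})\Per(\Omega)$. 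If instead $\Omega$ has infinite perimeter, the same bound forces $E_{\gamma,\eps}(\Omega_\eps) \to +\infty$, matching the value of $E_{\gamma,0}$.

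For the $\Gamma$-limsup inequality, the constant recovery sequence $\Omega_\eps \equiv \Omega$ is the natural candidate. When $\Omega$ has finite perimeter, the one-sided Taylor expansion $c_\Omega(0) - c_\Omega(r) = \tfrac{k_n}{2}\Per(\Omega)\,r + o(r)$ at $r=0$ from \Cref{mainthm:A}, together with the fact that $\sigma_{n-1}\, k_\eps(r)\,\sin^{n-1}(r)\,\di r$ is a probability measure concentrating at the origin on scale $r \sim \eps$, should give
\begin{align}
    \frac{1}{\eps}\int_{M\times M} K_\eps(x,y)\,|\ind_\Omega(x) - \ind_\Omega(y)|\,\di x\,\di y \longrightarrow \frac{\Per(\Omega)}{\gamma_{\crit}},
\end{align}
whence $E_{\gamma,\eps}(\Omega) \to (1-\gamma/\gamma_{\crit})\Per(\Omega) = E_{\gamma,0}(\Omega)$; the case of infinite perimeter is trivial since both sides are $+\infty$.

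The main obstacle sits in this last step: one needs quantitative control of the concentration of $\tfrac{1}{\eps}\,k_\eps(r)\sin^{n-1}(r)$ near $r=0$, strong enough both to make sense of $\gamma_{\crit} = (\lim_\eps\mu_\eps)^{-1}$ and, more delicately, to show that the error contribution $\tfrac{\sigma_{n-1}}{\eps}\int_0^\pi k_\eps(r)\sin^{n-1}(r)\, o(r)\,\di r$ really is $o(1)$. Since \Cref{mainthm:A} only provides a one-sided derivative at $r=0$ rather than a uniform modulus, the natural route is to split the $r$-integral at a scale $\eps \ll \delta \ll 1$: on $r \in [0,\delta]$ the Taylor expansion combined with the uniform Lipschitz bound from \Cref{mainthm:A} controls the integrand, while on $r \in [\delta,\pi]$ the trivial estimate $|c_\Omega(0)-c_\Omega(r)| \le \vol(\Omega)$ together with the tail decay of the Helmholtz kernel (accessible via its explicit ultraspherical/Bessel expansion, cf.\ \cite{FHLZ:2016}) makes the contribution negligible after division by $\eps$.
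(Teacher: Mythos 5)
Your overall architecture coincides with the paper's: your reformulation of the non-local term as $2\sigma_{n-1}\int_0^\pi K_\eps(r)\sin(r)^{n-1}\bigl(c_\Omega(0)-c_\Omega(r)\bigr)\di r$ is exactly \Cref{lemma:reformulation_energy_to_autocorrelation1}, your $\mu_\eps$ is $1/\gamma_\eps$ from \eqref{eq:defi_gamma_crit}, your liminf bound follows from the sharp Lipschitz estimate of \Cref{mainthm:A} (the paper packages this via integration by parts in \Cref{lemma:reformulation_energy_to_autocorrelation2} and \Cref{cor:pointwise_lower_bound}, using $c_\Omega'(r)\ge c_\Omega'(0)$ a.e.; your direct insertion of $c_\Omega(0)-c_\Omega(r)\le\tfrac{k_n}{2}\Per(\Omega)\,r$ yields the identical bound $(1-\gamma/\gamma_\eps)\Per(\Omega_\eps)$ and is fine), and your limsup uses the constant recovery sequence with a splitting of the radial integral, just as in \Cref{prop:energy_pointwise_convergence}.

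The genuine gap is that everything you need about the kernel itself is asserted rather than proved, and this is where most of the work in the paper's Section 5 lies. First, your liminf inequality silently requires $K_\eps\ge 0$: without positivity, inserting an upper bound on $c_\Omega(0)-c_\Omega(r)$ does not produce a lower bound on the energy; the paper proves this via the strong maximum principle (\Cref{prop:helmholtz_kernel_properties}c). Second, the claim that $\mu_\eps$ converges to a finite positive limit — i.e.\ that $\gamma_\crit$ is even well defined, let alone equal to $1$ — does not follow from a vague appeal to concentration near the diagonal; the paper establishes it in \Cref{prop:convergence_gamma_eps} through the explicit Ferrers/Bessel representation of $K_\eps$ and a dominated-convergence evaluation of the resulting Bessel integral, which is precisely what identifies the prefactor $1-\gamma/\gamma_\crit$ in the limit functional. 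Third, your $\delta$-splitting for the limsup needs a uniform-in-$\eps$ bound on the first moment on $[0,\delta]$ and decay of $\tfrac1\eps\int_\delta^\pi K_\eps(r)\sin(r)^{n-1}\,r\di r$ for fixed $\delta$; the paper obtains both by testing the Helmholtz equation with $d(\cdot,y)^2$ and using $\Delta_x d(x,y)^2 = 2+2(n-1)\,d(x,y)\cot\bigl(d(x,y)\bigr)$ (\Cref{prop:helmholtz_kernel_properties}d). With these three kernel inputs supplied, your argument closes and is essentially the paper's proof; without them, the definition of $\gamma_\crit$, the sign of your lower bound, and the error estimate in the limsup are all unsupported.
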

For the round sphere, we find $\gamma_\crit = 1$, independent of the sphere's size and matching the flat case \cite{BrKnMC:2023}.
In the subcritical regime, the energy localises to a constant multiple of the perimeter functional.
Since minimisers of the perimeter on the round sphere are geodesic balls \cite{Ros:2005}, fine-scale patterns do not form in that case.

\subsection*{Related Literature}
The literature regarding non-local isoperimetric problems is vast and the following list is by no means comprehensive.
For flat underlying geometries, various scenarios of asymptotic expansions in the framework of $\Gamma$--convergence are studied e.g. in \cite{ChoPel:2010, GoMuSe:2013, GoMuSe:2014}. In the context of large mass minimisers of Gamow's liquid drop model, existence of minimisers, stability of minimisers, as well as $\Gamma$--convergence of the non-local isoperimetric energy in the subcritical regime is established in \cite{Pegon:2021} for a large class of interaction kernels $K_\eps$ (see also \cite{MerPeg:2022}). A similar $\Gamma$--convergence result with focus on pattern formation in biological membranes is proved in \cite{BrKnMC:2023} for the flat torus instead of $\mathbbm R^n$. The $\Gamma$--convergence of $E_{\gamma,\eps}$ in the case when the interaction kernel is the solution to fractional Helmholtz equations on open domains with several boundary conditions is considered in \cite{MelWu:2022}. Higher order $\Gamma$--convergence was shown in the recent paper \cite{MuNoSi:2025} where the interaction kernels are given as Yukawa potentials. Further studies of higher order expansions for various types of interaction kernels $K_\eps$ include \cite{MurSim:2019, CesNov:2020, KnuShi:2023}. 

For curved spaces, the literature is much sparser. For the round $2$-sphere, axisymmetrical critical points are studied in \cite{ChToTs:2015} where the interaction kernel is the solution to the Poisson equation. For sufficiently small relative interaction strength $\gamma$, it is shown in \cite{Topalo:2013} that geodesic balls (up to rotation) minimise the non-local isoperimetric energy.

\subsection*{Organisation}
\Cref{sec:riemannian_manifolds_geodesic_flow} collects the necessary preliminaries on the geodesic flow and the invariance of the Liouville measure.
\Cref{sec:bv_on_manifolds} provides relevant results from the theory of functions of bounded variation on Riemannian manifolds and establishes a key relation between the total variation and the mean geodesic difference quotient (\Cref{thm:variation_as_limit_of_difference_quotient}, \Cref{thm:geodesic_variation_Lipschitz}).
\Cref{sec:riemannian_autocorrelation_function} introduces the Riemannian autocorrelation function, its fundamental properties (\Cref{prop:autocorrelation_function}), and the characterisation of sets of finite perimeter  (\Cref{thm:autocorrelation_function_Lipschitz}).
Finally, \cref{sec:application_pattern_formation} presents the application to non-local isoperimetric energies on the sphere and proves the $\Gamma$-convergence result (\Cref{thm:gamma_convergence}).

\subsection*{Acknowledgments} 
We thank Steffen Schmidt and Gabriel Paternain for valuable discussions.
A major part of this work was carried out while DB was affiliated with Heidelberg University, and while MB and DB were funded by the Deutsche Forschungsgemeinschaft (DFG, German Research Foundation, Germany) under Germany’s Excellence Strategy EXC-2181/1-39090098 (the Heidelberg STRUCTURES Cluster of Excellence).
During finalisation of this work, MB was funded by the European Research Council (ERC) under the European Union’s Horizon 2020 research and innovation programme (project PEPS, no. 101071786).

\section{Riemannian manifolds and the geodesic flow}
\label{sec:riemannian_manifolds_geodesic_flow}

Throughout this section $(M^n,g)$ denotes a connected, oriented, complete Riemannian manifold without boundary of dimension $n \geq 2$.
Completeness ensures that geodesics are globally defined, which suffices for the geometric machinery presented in this section.
Later, when discussing functions of bounded variation and studying pattern formation on manifolds, we will additionally require compactness to obtain uniform bounds on various geometric quantities.

Our approach to generalizing the autocorrelation function relies on replacing averaged Euclidean translations with the geodesic flow on the unit tangent bundle $SM$.
More specifically, on Euclidean space, the autocorrelation function relies fundamentally on translation invariance of the Lebesgue measure.
On a general Riemannian manifold, translations do not exist.
Instead, the geodesic flow on the unit tangent bundle, together with the invariance of the Liouville measure under this flow (Proposition~\ref{prop:geo_flow_volume}), provides the correct substitute for translation invariance.
It allows us to define averaged quantities that generalise the Euclidean autocorrelation function to arbitrary manifolds.
This section collects the necessary geometric preliminaries.

\subsection{The unit tangent bundle and averaged integrals}
\label{subsec:SM_and_averages}

For $x \in M$ we write $T_xM$ for the tangent space at $x$ equipped with the inner product $g_x$ and the associated norm $\|v\|_{g_x} = \sqrt{g_x(v,v)}$.
The unit sphere in $T_xM$ is denoted by $S_xM \coloneqq \bigl\{ w \in T_xM : \|w\|_{g_x} = 1 \bigr\}.$

The unit tangent bundle is
\begin{align}
    SM \coloneqq \bigl\{ (x,w) : x \in M,\ w \in S_xM \bigr\} \subset TM,
\end{align}
equipped with the canonical projection
\begin{align}
    \pi \colon SM \longrightarrow M,
    \qquad
    \pi(x,w) = x.
\end{align}
We shall often write $\theta = (x,w) \in SM$.

The Riemannian metric $g$ induces a Riemannian volume form $\vol_M$ on $M$, as well as a canonical volume form $\vol_{SM}$ on $SM$, usually called the Liouville measure.
We work with its fibrewise normalisation
\begin{align}
    \voln_{SM} \coloneqq \frac{1}{\sigma_{n-1}} \, \vol_{SM},
\end{align}
where as before, we write $\sigma_k$ for the $k$--dimensional volume of the Euclidean unit sphere $S^k \subset \IR^{k+1}$.
Integrals with respect to $\voln_{SM}$ are written as
\begin{align}
    \fint_{SM} F(\theta) \,\di \theta
    \coloneqq \frac{1}{\sigma_{n-1}} \int_{SM} F(\theta) \,\di \vol_{SM}(\theta)
\end{align}
for integrable $F \colon SM \longrightarrow \IR$.

For $x \in M$ and an integrable function $\varphi \colon S_xM \longrightarrow \IR$ we use the normalised spherical average
\begin{align}
    \oint_x \varphi(w) \,\di w
    \coloneqq \frac{1}{\sigma_{n-1}} \int_{S_xM} \varphi(w) \,\di S_x(w),
\end{align}
where $\mathrm d S_x$ is the $(n-1)$--dimensional surface measure on $S_xM$ induced by the inner product $g_x$ on $T_xM$.
The next proposition records the disintegration of the Liouville measure along the fibres of~$\pi$.
\begin{proposition}
\label{prop:liouville_disintegration}
    For every $F \in L^1(SM)$,
    \begin{align}
        \fint_{SM} F(\theta) \,\di \theta
        = \int_M \oint_x F(x,w) \,\di w \,\di x.
    \end{align}
\end{proposition}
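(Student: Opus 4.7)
The plan is to prove the disintegration locally in an orthonormal trivialisation and then glue the local pieces together via a partition of unity on $M$. By linearity in $F$, it suffices to treat the case where $F$ is supported in $\pi^{-1}(U)$ for an open set $U \subset M$ sufficiently small to admit a smooth orthonormal frame $\{e_1, \ldots, e_n\}$ of $TM|_U$. Such $U$ exist about every point of any Riemannian manifold. The frame yields a smooth diffeomorphism
\begin{align*}
    \Psi_U \colon U \times S^{n-1} \longrightarrow SM|_U,
    \quad (x,\xi) \longmapsto \Bigl(x,\ \sum_{i=1}^n \xi^i e_i(x)\Bigr).
\end{align*}

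The crux is to verify that $\Psi_U$ pulls back the Liouville form to a product,
\begin{align*}
    \Psi_U^{*}\,\vol_{SM} \;=\; \vol_M \otimes \vol_{S^{n-1}},
\end{align*}
where $\vol_{S^{n-1}}$ denotes the standard round volume on the Euclidean unit sphere. This is the classical decomposition of the canonical (Sasaki) volume on $SM$ in an orthonormal frame. To see it, fix $x_0 \in U$ and pass to Riemannian normal coordinates centred at $x_0$. In these coordinates the metric $g$ is Euclidean at $x_0$, and the induced Sasaki metric on $TM$ agrees at the fibre over $x_0$ with the Euclidean product metric on $\IR^n \times \IR^n$. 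Restricting to the unit sphere bundle, the induced top-degree form at that fibre is therefore precisely $\vol_{g_{x_0}} \otimes \vol_{S^{n-1}}$. Since $x_0 \in U$ was arbitrary and both sides of the claimed identity are smooth top-degree forms on $SM|_U$, they agree throughout $U$. Independence of the chosen frame is automatic, since any two orthonormal frames on $U$ differ by a smooth $O(n)$-valued gauge transformation and $\vol_{S^{n-1}}$ is $O(n)$-invariant.

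With the product decomposition in hand, Fubini yields
\begin{align*}
    \int_{SM|_U} F \,\di\vol_{SM}
    \;=\; \int_U \int_{S^{n-1}} F\bigl(\Psi_U(x,\xi)\bigr) \,\di\vol_{S^{n-1}}(\xi) \,\di\vol_M(x).
\end{align*}
The inner integral coincides with $\int_{S_xM} F(x,w) \,\di S_x(w)$, frame-independent by $O(n)$-invariance of the spherical surface measure. Dividing both sides by $\sigma_{n-1}$ and summing over a partition of unity subordinate to a cover by such trivialising neighbourhoods yields the stated identity. The main obstacle is precisely the product decomposition of the Liouville form in a local orthonormal trivialisation; this is really the heart of the statement, and once it is in place, everything else reduces to a routine application of Fubini and standard bookkeeping with a partition of unity.
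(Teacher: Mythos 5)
Your proposal is correct in substance, but it takes a genuinely different route from the paper: the paper's entire proof is the citation to \cite[Theorem VII.1.3]{Chavel:1995}, which \emph{is} this disintegration statement, whereas you reprove it from scratch via a local orthonormal trivialisation $\Psi_U$, the identification of the Liouville measure with the Sasaki-induced volume on $SM$, Fubini, and a partition of unity. That buys a self-contained argument, at the cost of the one computation you largely suppress, which is exactly the heart of the matter: your pointwise step in normal coordinates controls the Sasaki volume of $SM$ along the fibre over $x_0$, but the identity you need concerns the pullback $\Psi_U^{*}\vol_{SM}$, and $d\Psi_U$ at $(x_0,\xi)$ contains derivative-of-frame terms — its vertical component on a horizontal input $X$ is $\sum_i \xi^i \nabla_X e_i$, which need not vanish unless the frame is chosen parallel at $x_0$. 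The rescue is that these terms are purely vertical: with respect to the horizontal--vertical splitting, $d\Psi_U$ is block lower-triangular with diagonal blocks the identity on $T_{x_0}M$ and the linear isometry $\dot\xi \longmapsto \sum_i \dot\xi^i e_i(x_0)$ of $T_\xi S^{n-1}$ onto the orthogonal complement of $w$ in $T_{x_0}M$, hence unimodular regardless of how the frame twists. The same observation is what actually justifies your frame-independence remark, since the gauge transformation $R(x)$ is $x$-dependent and contributes an analogous off-diagonal term $(d_X R)\xi$, not just a fibrewise rotation. With that one line added, the local product decomposition, Fubini, and the partition-of-unity extension to general $F \in L^1(SM)$ go through, and the normalisation by $\sigma_{n-1}$ on both sides matches the paper's $\fint_{SM}$ and $\oint_x$.
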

\begin{proof}
    See e.g. \cite[Theorem VII.1.3.]{Chavel:1995}.
\end{proof}

We will later repeatedly need the average of the absolute value of the Riemannian metric over the unit sphere.
The next proposition shows that this average is proportional to the norm of the underlying vector and identifies the proportionality constant.

\begin{proposition} \label{prop:sphere_integration}
    For $x \in M$ and $u \in T_x M$ we have
    \begin{align}
        \oint_x |g_x(u,w)| \di w = k_n \norm{u}_{g_x},
    \end{align}
    where $k_n$ is the \emph{sphere volume ratio}, given by
    \begin{align}
        k_n
        \coloneqq \frac{2}{n-1} \frac{\sigma_{n-2}}{\sigma_{n-1}}
        = \frac{1}{\pi} \frac{\sigma_n}{\sigma_{n-1}}
        = \frac{2}{\sigma_{n - 1}}\frac{\pi^{\frac{n-1}2}}{\Gamma(\tfrac{n+1}2)}.
    \end{align}
\end{proposition}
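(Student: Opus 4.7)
The plan is to reduce the statement to a classical integral on the Euclidean unit sphere. Since $(T_xM, g_x)$ is an $n$-dimensional inner product space, any linear isometry $\iota\colon (T_xM, g_x) \to (\IR^n, \langle\cdot,\cdot\rangle)$ identifies $S_xM$ with $S^{n-1}$ and pushes the induced surface measure $\di S_x$ to the standard surface measure $\di S$ on $S^{n-1}$. Under this identification $g_x(u,w) = \langle \iota(u), \iota(w)\rangle$ and $\norm{u}_{g_x} = |\iota(u)|$, so the asserted identity is preserved by the change of variables. If $u=0$ both sides vanish; for $u\neq 0$ we may further compose $\iota$ with an orthogonal map sending $u/\norm{u}_{g_x}$ to $e_1$, which reduces the claim to
\begin{align*}
    \frac{1}{\sigma_{n-1}} \int_{S^{n-1}} |w_1|\, \di S(w) = k_n.
\end{align*}

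Next, I would evaluate this integral by slicing the sphere along the $w_1$-axis. In spherical coordinates with polar angle $\phi \in [0,\pi]$ measured from $e_1$, each level set $\{w_1 = \cos\phi\}$ is an $(n-2)$-sphere of radius $\sin\phi$, and the surface measure on $S^{n-1}$ disintegrates as $\sigma_{n-2}\sin^{n-2}\phi\, \di\phi$ on functions depending only on $w_1$. Hence
\begin{align*}
    \int_{S^{n-1}} |w_1|\, \di S &= \sigma_{n-2}\int_0^\pi |\cos\phi|\sin^{n-2}\phi\, \di\phi \\
    &= 2\sigma_{n-2}\int_0^{\pi/2}\cos\phi\sin^{n-2}\phi\, \di\phi = \frac{2\sigma_{n-2}}{n-1},
\end{align*}
the last equality by the substitution $s=\sin\phi$. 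Dividing by $\sigma_{n-1}$ yields the first form $k_n = \frac{2}{n-1}\frac{\sigma_{n-2}}{\sigma_{n-1}}$.

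Finally, the equivalent expressions $\frac{1}{\pi}\frac{\sigma_n}{\sigma_{n-1}}$ and $\frac{2}{\sigma_{n-1}}\frac{\pi^{\frac{n-1}2}}{\Gamma(\frac{n+1}2)}$ are purely algebraic consequences of the closed form $\sigma_{k} = \frac{2\pi^{(k+1)/2}}{\Gamma((k+1)/2)}$ together with the identity $\Gamma(s+1) = s\Gamma(s)$ applied at $s=\frac{n-1}2$. There is no real obstacle: the only conceptual step is the reduction via a linear isometry, reflecting the rotational symmetry of $S_xM$, and the only computation is the elementary one-dimensional integral above (which may alternatively be evaluated via the Beta function).
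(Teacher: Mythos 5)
Your proposal is correct and follows essentially the same route as the paper: reduce via a linear isometry of $(T_xM,g_x)$ with $\IR^n$, use rotational symmetry to align $u$ with $e_1$, compute $\int_{S^{n-1}}|w_1|\,\di S = \tfrac{2}{n-1}\sigma_{n-2}$ by the same one-dimensional slicing integral, and deduce the alternative forms of $k_n$ from the standard sphere-volume formulas. The only cosmetic differences are your explicit treatment of the case $u=0$ and normalising $u$ before rotating, which the paper handles implicitly.
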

\begin{proof}
Every inner product space is isometric to the standard Euclidean space.
Indeed, the Gram-Schmidt process allows us to construct an orthogonal linear isomorphism
\begin{align}
    \psi: (T_x M, g_x)
    \xrightarrow[\mspace{40mu}]{\cong}
    (\IR^n,\langle \cdot, \cdot \rangle)
    \qquad\text{satisfying}\qquad
    g_x(v_1,v_2) = \langle \psi(v_1), \psi(v_2) \rangle
\end{align} 
for all $v_1,v_2 \in T_x M$.
We set $z \coloneqq \psi(u)$.
Since $\norm{w}_{g_x} = \norm{\psi(w)}$,
the transformation $w \longmapsto \psi(w) = y$ sends the unit sphere in $T_x M$ to the one in $\IR^n$.
Hence, the integral can be rewritten as
\begin{align}
    \int_{\substack{ w \in T_x M \\ \norm{w}_{g_x} = 1 }} |g_x(u,w)| \di w
    = \int_{\substack{ y \in \IR^n \\ \norm{y} = 1 }}
        |\langle y,z \rangle| \di y.
\end{align} 
As rotations act transitively on $S^{n-1}$,
we may choose $R \in SO(n)$ with $Rz = \norm{z} e_1$.
Then
\begin{align}
    \langle y,z \rangle
    = \langle Ry,Rz \rangle
    = \norm{z} \langle Ry,e_1 \rangle
    = \norm{u}_{g_x} \langle x,e_1 \rangle
    = \norm{u}_{g_x} x_1,
\end{align}
where we set $x \coloneqq Ry$ and use that $\norm{z} = \norm{\psi(u)} = \norm{u}_{g_x}$.
Thus,
\begin{align}
    \int_{\substack{ y \in \IR^n \\ \norm{y} = 1 }}
        |\langle y,z \rangle| \di y
    = \norm{u}_{g_x} \int_{\substack{ x \in \IR^n \\ \norm{x} = 1 }} |x_1| \di x.
\end{align}
Using poly-spherical coordinates to evaluate the remaining integral, we obtain
\begin{align}
    \int_{ S^{n-1} } |x_1| \di x
    &= \int_0^\pi \int_{ S^{n-2} }
        |\cos(\phi)| \sin^{n-2}(\phi) \di z \di \phi
\\
    &= 2 \, \sigma_{n-2} \int_0^{\frac{\pi}2}
        \sin^{n-2}(\phi) \ubr{\cos(\phi) \di \phi}{=}{\,\di \sin(\phi)}
\\[-2ex]
    &= 2 \, \sigma_{n-2} \int_0^1
        s^{n-2} \di s
\\
    &= \frac2{n-1} \, \sigma_{n-2}.
\end{align}
Normalising by the sphere volume $\sigma_{n-1}$ yields the claimed identity
\begin{align}
    \oint_x |g_x(u,w)| \di w
    = \frac{1}{\sigma_{n-1}} \cdot \frac{2}{n-1} \, \sigma_{n-2} \cdot \norm{u}_{g_x}
    = k_n \norm{u}_{g_x}.
\end{align}
The alternative expressions for $k_n$ follow from the well-known sphere volume formulas
\begin{align} \label{eq:sphere_volume_formula}
    \sigma_n = \frac{2\pi}{n-1} \sigma_{n-2}
    \qquad\text{and}\qquad
    \sigma_n = \frac{2\pi^\frac{n+1}2}{\Gamma(\frac{n+1}2)}.
\end{align}
\end{proof}

\subsection{The geodesic flow}
\label{subsec:geodesic_flow}

For $(x,v) \in TM$ we denote by $\gamma_{x,v} \colon \IR \longrightarrow M$ the unique geodesic with initial data
\begin{align}
    \gamma_{x,v}(0) = x,
    \qquad
    \dot\gamma_{x,v}(0) = v.
\end{align}
Completeness of $(M,g)$ ensures that $\gamma_{x,v}$ is defined for all $t \in \IR$.
The exponential map at $x$ is given by
\begin{align}
    \exp_x \colon T_xM \longrightarrow M,
    \qquad
    \exp_x(v) \coloneqq \gamma_{x,v}(1).
\end{align}
Equivalently, $\gamma_{x,v}(t) = \exp_x(tv)$.

The geodesic flow is the flow on the tangent bundle $TM$ obtained by following geodesics in phase space.
For $t \in \IR$ define
\begin{align}
    \Phi_t \colon TM \longrightarrow TM,
    \qquad
    \Phi_t(x,v)
    \coloneqq \bigl( \gamma_{x,v}(t), \dot\gamma_{x,v}(t) \bigr).
\end{align}
In terms of the exponential map this can be written as
\begin{align}
    \Phi_t(x,v)
    = \bigl( \exp_x(tv), \di_{tv}\exp_x(v) \bigr).
\end{align}
The flow property $\Phi_{s+t} = \Phi_s \circ \Phi_t$ and $\Phi_0 = \mathrm{id}_{TM}$ is immediate from the uniqueness of geodesics.

Since geodesics preserve the speed of their tangent vectors, the geodesic flow preserves the unit tangent bundle:
\begin{align}
    \Phi_t(SM) \subset SM
    \qquad\text{for all } t \in \IR.
\end{align}
We shall use the notation
\begin{align}
    \theta^t
    \coloneqq \Phi_t(\theta)
    \qquad\text{for } \theta \in SM.
\end{align}

The following invariance property is the cornerstone of our approach.
It plays exactly the role that translation invariance plays in Euclidean geometry, and will be used throughout the article.

\clearpage 

\begin{proposition}
\label{prop:geo_flow_volume}
    The normalised Liouville measure is invariant under the flow.
    In particular, the following statements hold:
    \begin{enumerate}
        \item 
        Let $G \in L^1(SM)$. Then for every $t \in \IR$,
        \begin{align}
            \fint_{SM} G(\theta) \di\theta
            = \fint_{SM} G(\theta^t) \di\theta.
        \end{align}
        \item Let $F \in L^1(SM \times SM)$.
        Then for all $r,s,t \in \IR$,
        \begin{align}
            \fint_{SM} F(\theta^r,\theta^s) \di\theta
            = \fint_{SM} F(\theta^{r+t},\theta^{s+t}) \di\theta.
        \end{align}
        \item Let $f \colon M \times M \longrightarrow \IR$ be measurable.
        For all $r,s,t \in \IR$,
        \begin{align} 
            \int_M \oint_x f\bigl( \exp_x(rw),\exp_x(sw) \bigr) \di w \di x
            &= \int_M \oint_x f\bigl( \exp_x((r+t)w),\exp_x((s+t)w) \bigr) \di w \di x.
        \end{align}
    \end{enumerate}
\end{proposition}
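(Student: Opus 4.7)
The plan is to treat part (a) as the core geometric input, from which parts (b) and (c) follow by formal manipulation. For (a), I would cite the classical Liouville theorem: the geodesic flow on $TM$ is the Hamiltonian flow of $H(x,v) = \tfrac12 g_x(v,v)$, so it preserves the symplectic volume on $TM$; restricting to the regular level set $SM = H^{-1}(\tfrac12)$, the induced measure (the Liouville measure $\vol_{SM}$) is invariant under $\Phi_t$. The normalisation by $\sigma_{n-1}$ does not affect invariance. Standard references are Paternain's monograph on geodesic flows or Chavel's book already cited for Proposition~\ref{prop:liouville_disintegration}. So (a) reduces to a citation.

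For (b), I would apply (a) to the function
\begin{align}
    G(\theta) \coloneqq F(\theta^r, \theta^s),
\end{align}
which is integrable by Fubini together with invariance of $\voln_{SM}$. Using the flow property $\Phi_t \circ \Phi_u = \Phi_{t+u}$ gives $G(\theta^t) = F((\theta^t)^r, (\theta^t)^s) = F(\theta^{r+t}, \theta^{s+t})$, so invoking (a) with $G$ produces exactly the claimed identity.

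For (c), I would first rewrite the iterated integral as an integral over $SM$. By Proposition~\ref{prop:liouville_disintegration},
\begin{align}
    \int_M \oint_x f\bigl(\exp_x(rw), \exp_x(sw)\bigr) \di w \di x
    = \fint_{SM} f\bigl(\pi(\theta^r), \pi(\theta^s)\bigr) \di\theta,
\end{align}
since $\pi(\Phi_t(x,w)) = \gamma_{x,w}(t) = \exp_x(tw)$ for unit $w$. Applying (b) to $F(\theta_1,\theta_2) \coloneqq f(\pi(\theta_1), \pi(\theta_2))$ and translating back via the disintegration formula yields the stated identity.

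The only conceptual obstacle is (a), which rests on the Hamiltonian structure of the geodesic flow and is standard; everything else is bookkeeping with the flow property and Fubini. A minor technical point to be careful about is the integrability of $G$ in (b), which follows since $\Phi_r$ and $\Phi_s$ are measure-preserving diffeomorphisms of $SM$ by (a), so the $L^1$ norm of $G$ is bounded by that of $F$ via Fubini on $SM \times SM$.
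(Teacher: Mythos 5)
Your proposal is correct and follows essentially the same route as the paper: part (a) is delegated to the classical Liouville invariance (the paper likewise cites Paternain, mentioning both the Hamiltonian and the Sasaki-metric arguments), part (b) is obtained by applying (a) to $G(\theta)=F(\theta^r,\theta^s)$ together with the flow property, and part (c) follows from (b) through the disintegration formula of Proposition~\ref{prop:liouville_disintegration} and the identity $\pi(\Phi_t(x,w))=\exp_x(tw)$. Your extra remark on the integrability of $G$ is a detail the paper's proof does not spell out, but it does not change the argument.
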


\begin{proof}
    The Liouville invariance in (a) is classical and can be proved either in local canonical coordinates on $T^*M$ using the Hamiltonian description of geodesic flow or directly in $(TM,g)$ using the Sasaki metric and divergence-free properties of the geodesic vector field.
    We refer to \cite[Chapter~1]{Paternain:1999} for details.

    Statement (b) is obtained from (a) by composing $G$ with $(\theta,\eta) \longmapsto F(\theta,\eta)$ and using the fact that $\Phi_t$ acts diagonally on the two arguments.
    Statement (c) follows from (b) by applying Lemma~\ref{prop:liouville_disintegration} with
    \begin{align}
        F(\theta^r,\theta^s)
        = f\bigl( \pi(\theta^r),\pi(\theta^s) \bigr)
        = f\bigl( \exp_x(rw),\exp_x(sw) \bigr),
    \end{align}
    where $\theta = (x,w) \in SM$.
\end{proof}

A frequently used special case of Proposition~\ref{prop:liouville_disintegration} is obtained by taking a function on $M$ and lifting it constantly along the fibres of $SM$.

\begin{proposition}
\label{prop:constant_lift}
    Let $f \in L^1(M)$ and $\hat f \colon SM \longrightarrow \IR$ be the constant lift $\hat f(x,w) \coloneqq f(x)$.
    Then
    \begin{align}
        \fint_{SM} \hat f(\theta) \di\theta
        = \int_M f(x) \di x,
    \end{align}
    and for every $r \in \IR$,
    \begin{align}
        \fint_{SM} \bigl| \hat f(\theta^r) - \hat f(\theta) \bigr| \di\theta
        = \int_M \oint_x \bigl| f(\exp_x(rw)) - f(x) \bigr| \di w \di x.
    \end{align}
\end{proposition}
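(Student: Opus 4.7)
My plan is to obtain both identities as near-immediate consequences of the fibrewise disintegration of the Liouville measure (\Cref{prop:liouville_disintegration}), together with the fact that the base-point projection of $\Phi_r(x,w)$ is $\exp_x(rw)$.

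First I would verify the identity $\fint_{SM} \hat f(\theta)\,\di\theta = \int_M f(x)\,\di x$. Since $\hat f(x,w) = f(x)$ does not depend on the fibre variable $w$, the inner spherical average in \Cref{prop:liouville_disintegration} collapses to $\oint_x \hat f(x,w)\,\di w = f(x)$, using that $\oint_x 1\,\di w = 1$ by the chosen normalisation of the spherical average. Applying the disintegration formula then gives the first identity. As a by-product, running the same argument with $|f|$ in place of $f$ shows that $\hat f \in L^1(SM)$; this legitimises the use of flow invariance (\Cref{prop:geo_flow_volume}(a)) to conclude $\hat f \circ \Phi_r \in L^1(SM)$, and hence that $|\hat f \circ \Phi_r - \hat f|$ is integrable on $SM$.

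For the second identity, I would use the explicit form $\Phi_r(x,w) = \bigl(\exp_x(rw),\,\di_{rw}\exp_x(w)\bigr)$ to read off $\pi(\theta^r) = \exp_x(rw)$. Since $\hat f$ depends only on the base point, this gives $\hat f(\theta^r) = f(\exp_x(rw))$ while $\hat f(\theta) = f(x)$, so
\begin{align}
    \bigl|\hat f(\theta^r) - \hat f(\theta)\bigr| = \bigl|f(\exp_x(rw)) - f(x)\bigr|,
\end{align}
which is an integrable function of $(x,w)$ on $SM$ by the preceding integrability discussion. A single application of \Cref{prop:liouville_disintegration} to this integrand then yields the claim.

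There is no real obstacle here: the statement is essentially a definition-chase connecting the normalised Liouville integral to its fibrewise description on $M$. The only subtlety worth flagging is the $L^1$-integrability of the two lifts, which is handled by applying the first identity to $|f|$ and, for the translated lift, combining this with the flow invariance supplied by \Cref{prop:geo_flow_volume}(a).
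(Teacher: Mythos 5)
Your proof is correct and follows the same route as the paper: both identities are obtained by applying the Liouville disintegration (\Cref{prop:liouville_disintegration}) to the constant lift and to $|\hat f\circ\Phi_r - \hat f|$, using that $\pi(\theta^r)=\exp_x(rw)$. Your extra care with $L^1$-integrability via flow invariance is a fine (if not strictly needed) addition, but the argument is essentially identical to the paper's.
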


\begin{proof}
    The first identity is \Cref{prop:liouville_disintegration} applied to the constant lift.
    For the second identity, apply \Cref{prop:liouville_disintegration} to $F(\theta) = |\hat f(\theta^r) - \hat f(\theta)|$ and use the definition of the geodesic flow.
\end{proof}

\subsection{The Jacobian of the exponential map}
\label{subsec:jacobian_of_exponential_map}

We will later need quantitative control of the Jacobian of the exponential map in Riemannian normal coordinates.
Fix $x\in M$ and consider the exponential map $\exp_x$ defined on a neighbourhood $U\subset T_xM$ of the origin.
The Jacobian determinant $J_x\colon U\to\IR$ is defined as the proportionality factor between the pullback of the Riemannian volume form on $M$ and the Euclidean volume form on $T_xM$, that is
\begin{align} \label{eq:jacobian_transformation_formula}
    (\exp_x^\ast \vol_M)(v)
    = J_x(v)\,\vol_{\mathrm{eucl}},
    \qquad v\in U,
\end{align}
where $\vol_{\mathrm{eucl}}$ is the Euclidean volume form on $T_xM\simeq \IR^n$ induced by $g_x$, which coincides with the usual Lebesgue measure on $\IR^n$.

Let $y\in M$ and let $0<R$ be smaller than the injectivity radius at $y$.
Then the change-of-variables formula to polar normal coordinates reads
\begin{align}\label{eq:polar-coordinates-1}
    \int_{B_R(y)} f(x) \di x = \sigma_{n-1} \int_0^R \oint_y f(\exp_y(rw)) J_y(rw) r^{n-1} \di w \di r .
\end{align}
In particular, for the round sphere $M=S^n$ of radius 1 the Jacobian determinant of the exponential map is given by $J_y(rw) = \big(\frac{\sin(r)}{r} \big)^{n - 1}$ and we obtain
\begin{align}\label{eq:polar-coordinates-2}
    \int_{S^n \setminus \{\pm y\}} f(x) \di x = \sigma_{n-1} \int_{0}^\pi \oint_y f(\exp_y(rw)) \di w\ \sin(r)^{n-1} \di r .
\end{align}

The following proposition records the asymptotic expansion we will use later in \autoref{prop:lower_bound_of_upper_limit_of_Q_Mordhorst}.
\begin{proposition}
\label{prop:jacobian_expansion}
    Let $x\in M$ and $w\in S_xM$.
    Then, as $r\to 0$, the Jacobian determinant of the exponential map satisfies
    \begin{align}
        J_x(rw)
        = 1 - \frac{1}{6}\,\operatorname{Ric}_x(w,w)\,r^2 + \mathcal O(r^3),
    \end{align}
    where $\operatorname{Ric}$ denotes the Ricci curvature.
\end{proposition}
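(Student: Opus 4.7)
The plan is to obtain the expansion via Jacobi fields along the radial geodesic $\gamma(t) \coloneqq \exp_x(tw)$. First, I complete $w$ to an orthonormal basis $(e_1,\dots,e_{n-1},w)$ of $T_xM$ and let $J_i$ be the Jacobi field along $\gamma$ with $J_i(0) = 0$ and $\nabla_t J_i(0) = e_i$. Varying $\exp_x$ in the direction $e_i$ yields the standard identification $d_{rw}\exp_x(e_i) = \tfrac{1}{r} J_i(r)$, while the Gauss lemma gives $d_{rw}\exp_x(w) = \dot\gamma(r)$, a unit vector orthogonal to the parallel transport of each $e_i$. Consequently
\begin{align*}
J_x(rw) = |\det d_{rw}\exp_x| = \frac{\sqrt{\det G(r)}}{r^{n-1}},
\qquad G(r) \coloneqq \bigl( g(J_i(r), J_j(r)) \bigr)_{i,j=1}^{n-1}.
\end{align*}

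Next I Taylor expand each $J_i$ from the Jacobi equation $\nabla_t^2 J_i = -R(J_i,\dot\gamma)\dot\gamma$. Since $J_i(0) = 0$, the equation forces $\nabla_t^2 J_i(0) = 0$; differentiating once more and using $\nabla_t \dot\gamma \equiv 0$ gives $\nabla_t^3 J_i(0) = -R(e_i,w)w$, so (parallel-transporting back to $T_xM$)
\begin{align*}
J_i(t) = t\,e_i - \tfrac{t^3}{6}\,R(e_i,w)w + \mathcal{O}(t^4).
\end{align*}
Taking inner products and invoking the pair symmetry $g(R(e_i,w)w,e_j) = g(e_i,R(e_j,w)w)$ to combine the two cross terms produces
\begin{align*}
g\bigl(J_i(r), J_j(r)\bigr) = r^2 \delta_{ij} - \tfrac{r^4}{3}\,R(e_i,w,w,e_j) + \mathcal{O}(r^5).
\end{align*}

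The rest is algebraic. Factoring $r^{2(n-1)}$ out of $\det G(r)$ and using $\det(I + A) = 1 + \operatorname{tr}(A) + \mathcal{O}(\|A\|^2)$ yields
\begin{align*}
\det G(r) = r^{2(n-1)}\Bigl(1 - \tfrac{r^2}{3}\sum_{i=1}^{n-1} R(e_i,w,w,e_i) + \mathcal{O}(r^3)\Bigr);
\end{align*}
the sum equals $\operatorname{Ric}_x(w,w)$, since extending the index set to include $e_n = w$ merely adds the vanishing term $R(w,w,w,w)$. Taking a square root and dividing by $r^{n-1}$ delivers the claim. The step most likely to require care is the bookkeeping of remainders: I need the $\mathcal{O}(t^4)$ error in $J_i$ to be smooth in $t$ and uniform over $w \in S_xM$, so that the final expansion has a genuine $\mathcal{O}(r^3)$ remainder. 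This follows from smooth dependence of Jacobi fields on their initial data together with compactness of $S_xM$, so the difficulty is essentially administrative. An equivalent route avoids Jacobi fields entirely: starting from the normal-coordinate expansion $g_{ij}(v) = \delta_{ij} - \tfrac{1}{3} R_{ikjl}\, v^k v^l + \mathcal{O}(|v|^3)$ and the identity $J_x(v) = \sqrt{\det g_{ij}(v)}$, the same trace calculation followed by a square-root expansion recovers the same answer.
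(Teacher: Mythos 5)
Your argument is correct and complete: the reduction of $J_x(rw)$ to the Gram determinant of the Jacobi fields $J_i$ via the Gauss lemma, the third-order Taylor expansion $J_i(t)=t\,e_i-\tfrac{t^3}{6}R(e_i,w)w+\mathcal O(t^4)$ forced by the Jacobi equation, and the trace identity $\sum_i R(e_i,w,w,e_i)=\operatorname{Ric}_x(w,w)$ are all handled properly, and the square-root expansion delivers exactly the stated coefficient $-\tfrac16$. The paper itself offers no argument here, only a citation to Lang (Chapter XV, Corollary 3.3); the Jacobi-field (equivalently, normal-coordinate metric expansion) computation you give is precisely the standard derivation behind that cited result, so you have in effect supplied the proof the paper outsources. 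One remark: for the proposition as stated the expansion is pointwise in $(x,w)$, so the uniformity you worry about is not needed here; it only becomes relevant for \Cref{cor:jacobian_exp_bound}, where your observation (smooth dependence of Jacobi fields on initial data plus compactness, now of all of $SM$) is indeed the right way to justify the uniform constant, and is slightly more careful than the paper's appeal to boundedness of the Ricci curvature alone.
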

\begin{proof}
    See, for example, \cite[Chapter~XV, Corollary~3.3]{Lang:1999}.
\end{proof}

\begin{corollary}
\label{cor:jacobian_exp_bound}
    Assume that $M$ is compact.
    Then there exist constants $r_0>0$ and $C>0$ such that for all $x\in M$, all $w\in S_xM$, and all $r\in[0,r_0]$,
    \begin{align}
        J_x(rw) \le 1 + C r^2.
    \end{align}
\end{corollary}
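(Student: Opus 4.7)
The plan is to upgrade the pointwise Taylor expansion from \Cref{prop:jacobian_expansion} to a uniform bound by exploiting compactness of the unit tangent bundle $SM$, which follows from compactness of $M$.

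First, I would observe that $(x,w) \longmapsto \operatorname{Ric}_x(w,w)$ is continuous on $SM$ and therefore attains a finite maximum $K \coloneqq \sup_{(x,w) \in SM} |\operatorname{Ric}_x(w,w)|$. This bounds the quadratic term in the expansion uniformly by $\tfrac{K}{6} r^2$.

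Second, I would establish a uniform estimate on the remainder. Since $M$ is compact, the injectivity radius $\operatorname{inj}(M)$ is strictly positive, and for any $r_1 < \operatorname{inj}(M)$ the map $(x, w, r) \longmapsto J_x(rw)$ is smooth on the compact set $\{(x,w,r) : (x,w) \in SM,\ r \in [0, r_1]\}$. Define the Taylor remainder at order two by
\begin{align}
    R(x,w,r) \coloneqq J_x(rw) - 1 + \tfrac{1}{6}\operatorname{Ric}_x(w,w)\, r^2.
\end{align}
By \Cref{prop:jacobian_expansion}, for each fixed $(x,w)$ we have $R(x,w,r) = \mathcal{O}(r^3)$; moreover, $R(x,w,0) = \partial_r R(x,w,0) = \partial_r^2 R(x,w,0) = 0$. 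Applying the integral form of Taylor's theorem to the smooth function $r \longmapsto J_x(rw)$ on $[0, r_1]$, the remainder can be written as $\tfrac{1}{6} r^3$ times an average of $\partial_r^3 J_x(rw)$ over $r \in [0, r_1]$. Since $\partial_r^3 J_x(rw)$ is continuous on the compact set $SM \times [0, r_1]$, it is uniformly bounded, and we obtain a constant $C' > 0$ with $|R(x,w,r)| \le C' r^3$ for all $(x,w) \in SM$ and $r \in [0, r_1]$.

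Combining both bounds, for all $r \in [0, r_1]$,
\begin{align}
    J_x(rw)
    \le 1 + \tfrac{K}{6} r^2 + C' r^3
    = 1 + \bigl(\tfrac{K}{6} + C' r\bigr) r^2,
\end{align}
so choosing any $r_0 \in (0, r_1]$ and setting $C \coloneqq \tfrac{K}{6} + C' r_0$ yields the claim. The only nontrivial point is turning the pointwise $\mathcal{O}(r^3)$ of \Cref{prop:jacobian_expansion} into a bound that is uniform in $(x,w) \in SM$; this is precisely where compactness of $M$ (which was not needed for the proposition) enters, via uniform boundedness of the third $r$-derivative of $J_x(rw)$ on the compact domain $SM \times [0, r_1]$.
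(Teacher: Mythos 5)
Your proof is correct and follows essentially the same route as the paper: compactness of $SM$ bounds the Ricci term, and the $\mathcal{O}(r^3)$ remainder in \Cref{prop:jacobian_expansion} is made uniform in $(x,w)$. The only difference is that you spell out this uniformity explicitly (via Taylor's theorem with integral remainder and boundedness of $\partial_r^3 J_x(rw)$ on a compact set), whereas the paper simply asserts it; this is a welcome clarification, not a different argument.
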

\begin{proof}
    Since $M$ is compact, the Ricci curvature is bounded on $SM$.
    Thus, the $O(r^3)$ term in \Cref{prop:jacobian_expansion} can be chosen uniformly in $(x,w)\in SM$, which yields the existence of $r_0>0$ and $C>0$ with the stated inequality.
\end{proof}

\section{Functions of Bounded Variation on Riemannian Manifolds}
\label{sec:bv_on_manifolds}

Throughout this section $(M^n, g)$ is a compact, connected, oriented Riemannian manifold without boundary of dimension $n\ge2$, equipped with the volume measure $\vol_g$ induced by the metric $g$.
We first recall the definition and relevant properties of functions of bounded variation on $M$, following \cite{MPPP-2007, KreMor:2019}.
We then introduce two key intermediate objects: the \emph{mean geodesic variation} $G_f(r)$, which measures the average $L^1$-change of $f$ along geodesics of length $r$, and the associated \emph{mean geodesic difference quotient} $Q_f(r) = G_f(r)/r$.
The main result (Theorem~\ref{thm:variation_as_limit_of_difference_quotient}) establishes that $\lim_{r\to0^+} Q_f(r) = k_n V[f]$, providing a characterisation of $BV(M)$ entirely in terms of geodesic behaviour.
The proof proceeds in three steps:
We first establish an upper bound via smooth approximation (\Cref{subsec:upper_bound}), then a matching lower bound via radial mollifiers (\Cref{subsec:lower_bound}), and finally combine these to obtain the limit (\Cref{subsec:bv_limit}).

\subsection{Basic definitions and properties}

Denote by $\Gamma(TM)$ the space of smooth vector fields on $M$ and write $\div(X)$ for the divergence of a vector field $X$ with respect to $g$.

\begin{definition}
    \label{def:total_variation}
    Let $f \in L^1(M)$. The \emph{total variation} of $f$ is defined as
    \begin{align}
        V[f] \coloneqq \sup \left\{ \int_M f \, \div(X) \,\mathrm{d}\vol_M : X \in \Gamma(TM), \, \|X(p)\|_{g_x} \leq 1 \text{ for all } x \in M \right\}.
    \end{align}
    We say $f \in L^1(M)$ has \emph{bounded variation} and write $f \in BV(M)$, if $V[f] < \infty$.
    We call $\Omega \subset M$ a set of finite perimeter if its characteristic function $\ind_\Omega \in BV(M)$.
    In that case we set $\Per(\Omega) \coloneqq V[\ind_\Omega]$.
\end{definition}

For differentiable functions $f \in C^1(M)$, the total variation coincides with the $L^1$-norm of the function's gradient $V[f] = \norm{\nabla f}_{L^1(M)}$ (see e.g. \cite[p.454]{KreMor:2019}).
Relatedly, for domains $\Omega$ with smooth boundary $\partial\Omega$, the perimeter coincides with the $(n-1)$-dimensional Riemannian volume of the boundary.

The space $BV(M)$ becomes a Banach space when equipped with the norm
\begin{align}
    \norm{f}_{BV(M)} \coloneqq \norm{f}_{L^1(M)} + V[f], \quad f \in BV(M).
\end{align}

The upcoming results record standard density and compactness properties of $BV(M)$ that will be used throughout this article.

\begin{proposition} \label{prop:bv_lower_semicontinuous}
    Let $f_k \in BV(M)$ and $f \in L^1(M)$ such that $f_k \to f$ in $L^1(M)$. Then
        \begin{align}
            V[f] \leq \liminf_{k \to \infty} V[f_k].
        \end{align}
\end{proposition}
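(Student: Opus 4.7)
The plan is to prove the lower semicontinuity directly from the duality definition of the total variation, mirroring the classical Euclidean argument (see e.g. Evans--Gariepy). The idea is that the total variation is a supremum of continuous linear functionals on $L^1(M)$, and the supremum of a family of continuous functionals is always lower semicontinuous.

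More precisely, I would proceed as follows. Fix an arbitrary admissible test vector field $X \in \Gamma(TM)$ with $\|X(x)\|_{g_x} \leq 1$ for every $x \in M$. Since $M$ is compact and $X$ is smooth, the divergence $\div(X) \in C^\infty(M)$ is bounded, say $\|\div(X)\|_{L^\infty(M)} \leq C_X < \infty$. The $L^1$-convergence $f_k \to f$ then yields
\begin{align}
    \left| \int_M (f_k - f) \, \div(X) \, \di \vol_M \right|
    \leq C_X \, \|f_k - f\|_{L^1(M)} \xrightarrow[k \to \infty]{} 0,
\end{align}
so that $\int_M f_k \, \div(X) \, \di \vol_M \to \int_M f \, \div(X) \, \di \vol_M$.

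From the definition of $V[f_k]$, the inequality $\int_M f_k \, \div(X) \, \di \vol_M \leq V[f_k]$ holds for every $k$. Passing to the $\liminf$ on the right-hand side and using the convergence established above gives
\begin{align}
    \int_M f \, \div(X) \, \di \vol_M \leq \liminf_{k \to \infty} V[f_k].
\end{align}
Since $X$ was an arbitrary admissible test vector field, taking the supremum over all such $X$ on the left-hand side yields $V[f] \leq \liminf_{k \to \infty} V[f_k]$, as required.

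There is no real obstacle here: the statement is a soft functional-analytic consequence of the fact that $V$ is defined as a pointwise supremum of $L^1$-continuous linear functionals $f \mapsto \int_M f \, \div(X) \, \di \vol_M$. The only place compactness of $M$ enters is in ensuring that $\div(X)$ is bounded, which could alternatively be replaced by assuming $X$ has compact support; but since $M$ is compact and without boundary, all smooth vector fields are automatically of compact support. Hence the argument carries over verbatim from the Euclidean setting.
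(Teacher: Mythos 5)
Your proof is correct and follows essentially the same route as the paper: fix an admissible test field $X$, pass to the limit in $\int_M f_k \, \div(X) \di\vol_M$ using $L^1$-convergence, bound by $\liminf_k V[f_k]$, and take the supremum over $X$. The only difference is that you make explicit the boundedness of $\div(X)$ on the compact manifold, which the paper leaves implicit.
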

\begin{proof}
    Let $X \in \Gamma(TM)$ such that $\norm{X(p)}_{g(p)} \leq 1$ for all $x \in M$.
    It follows from the assumed $L^1$--convergence that
    \begin{align}
        \int_M f \, \div(X) \di \mathrm{vol}_g = \lim_{k \to \infty}\int_M f_k \, \div(X) \di \mathrm{vol}_g \leq \liminf_{k \to \infty} V[f_k].
    \end{align}
    Taking the supremum over all vector fields with $\norm{X(p)}_{g(p)} \leq 1$ proves the claim.
\end{proof}

\begin{proposition} \label{prop:smooth_functions_dense_in_bv}
    Then for every $f \in BV(M)$ and for every $\eps > 0$ there exists $\varphi \in C^\infty(M)$ such that $\|f - \varphi \|_{L^1(M)} + \bigl| V[f] - V[\varphi] \bigr| < \eps$.
\end{proposition}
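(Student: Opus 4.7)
The plan is to reduce the strict approximation statement to finding, for each $f \in BV(M)$, a sequence $\varphi_k \in C^\infty(M)$ with $\varphi_k \to f$ in $L^1(M)$ and $\limsup_{k \to \infty} V[\varphi_k] \le V[f]$. Once such a sequence is available, the lower semicontinuity of \Cref{prop:bv_lower_semicontinuous} forces $V[\varphi_k] \to V[f]$, so both $\|f - \varphi_k\|_{L^1(M)}$ and $\bigl|V[f] - V[\varphi_k]\bigr|$ become arbitrarily small for $k$ large; any prescribed $\eps > 0$ is then realised by a single $\varphi = \varphi_k$.

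The sequence $\varphi_k$ would be built by partition-of-unity mollification. Using compactness of $M$, I would fix a finite atlas of normal-coordinate charts $(U_j,\phi_j)_{j=1}^N$ together with a smooth subordinate partition of unity $\{\rho_j\}$ satisfying $\rho_j \in C_c^\infty(U_j)$ and $\sum_j \rho_j \equiv 1$. Decompose $f = \sum_j \rho_j f$, push each piece $\rho_j f$ to $\IR^n$ via $\phi_j$, convolve with a standard Euclidean mollifier $\eta_{1/k}$ at a scale small enough that the support remains inside $\phi_j(U_j)$, transport back to $M$, and sum to obtain $\varphi_k \in C^\infty(M)$. Smoothness is immediate, and the $L^1$-convergence $\varphi_k \to f$ follows from the Euclidean statement applied chart by chart, since the Riemannian volume form differs from Lebesgue measure in each chart only by a smooth positive factor.

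The substantive step is the variation upper bound $\limsup_k V[\varphi_k] \le V[f]$. In each chart the norm on $T_xM$ and the volume form agree with their Euclidean counterparts up to smooth factors that can be made as close to unity as desired by refining the atlas; the classical fact that convolution with a standard mollifier is continuous in the strict topology on $BV(\IR^n)$ then transfers chart by chart to the Riemannian setting. Summing over $j$ produces commutator contributions from differentiating through the cut-offs $\rho_j$, but these are controlled in $L^1$ by $\|f\|_{L^1(M)} \sup_j \|\nabla \rho_j\|_\infty$ and, crucially, telescope via $\sum_j \nabla \rho_j \equiv 0$ to a term that vanishes in the limit $k \to \infty$. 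The main obstacle is precisely this bookkeeping: keeping the chart-wise comparison of Riemannian and Euclidean total variation uniform across the atlas, absorbing the commutators coming from the partition of unity, and matching the mollification scale to the chart geometry. The complete execution of these estimates is essentially what is carried out in \cite{MPPP-2007, KreMor:2019}, to which one may refer for the technical details.
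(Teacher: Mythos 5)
Your proposal is essentially the argument behind the result the paper invokes: the paper's entire proof is the citation \cite[Proposition 1.4]{MPPP-2007}, and your partition-of-unity, chart-wise mollification scheme combined with the lower semicontinuity of \Cref{prop:bv_lower_semicontinuous} is precisely the standard construction carried out there. The sketch is sound (with the understood caveat that the atlas, or equivalently the mollification scales, must be chosen in terms of $\eps$ so that the chart-wise comparison of Riemannian and Euclidean variation only costs a factor $1+\delta$ with $\delta$ small), so this is the same approach, merely with more detail than the paper itself supplies.
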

\begin{proof}
See \cite[Proposition 1.4]{MPPP-2007}.
\end{proof}

\begin{proposition} \label{prop:bv_compact}
    Let $f_k \in BV(M)$ such that $\sup_k \norm{f_k}_{BV(M)} < \infty$.
    Then there exists $f \in BV(M)$ and a subsequence (not relabeled) such that $f_k \to f$ in $L^1(M)$.
\end{proposition}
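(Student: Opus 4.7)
The plan is to reduce the statement to the classical Euclidean compactness theorem for $BV$ via a finite atlas. Since $M$ is compact, fix a finite cover by coordinate charts $(U_i, \varphi_i)_{i=1}^N$ with $\overline{U_i}$ compactly contained in a slightly larger chart domain, and choose a smooth partition of unity $\{\eta_i\}$ subordinate to $\{U_i\}$. On each chart, the components $g_{ab}$ of the metric in coordinates are smooth, and by compactness the Riemannian volume form $\sqrt{\det g}\,\di x$ and the Lebesgue measure are mutually comparable with uniform constants; similarly the fibre norms $\|\cdot\|_{g_x}$ and the Euclidean norm on $\varphi_i(U_i)$ are uniformly comparable. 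This will guarantee that $L^1$ and $BV$ norms transferred between $M$ and $\varphi_i(U_i)$ are equivalent up to constants depending only on the atlas.

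Next, I would work with the localised sequences $f_k^{(i)} \coloneqq (f_k \eta_i)\circ\varphi_i^{-1} \in BV(\varphi_i(U_i))$. Using the $BV$ product rule together with the uniform $C^1$-bounds on $\eta_i$, each sequence is uniformly bounded in $BV$ of its (bounded, open) Euclidean chart image:
\begin{align}
    \|f_k^{(i)}\|_{BV(\varphi_i(U_i))} \le C_i \bigl( \|f_k\|_{L^1(M)} + V[f_k] \bigr) \le C_i \sup_k \|f_k\|_{BV(M)}.
\end{align}
By the classical Rellich--Kondrachov compactness in Euclidean $BV$ (i.e.\ the embedding $BV(\varphi_i(U_i)) \hookrightarrow L^1(\varphi_i(U_i))$ is compact for bounded open sets with Lipschitz boundary, which we can always arrange by shrinking the chart image to a ball), there exists a subsequence along which $f_k^{(i)}$ converges in $L^1(\varphi_i(U_i))$. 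A standard diagonal extraction over $i=1,\dots,N$ produces a single subsequence, which I relabel, along which $f_k \eta_i \to g_i$ in $L^1(M)$ for every $i$.

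Setting $f \coloneqq \sum_{i=1}^N g_i$ and using $\sum_i \eta_i \equiv 1$, I obtain
\begin{align}
    \|f_k - f\|_{L^1(M)} \le \sum_{i=1}^N \|f_k\eta_i - g_i\|_{L^1(M)} \longrightarrow 0.
\end{align}
Finally, lower semicontinuity of the total variation (\Cref{prop:bv_lower_semicontinuous}) yields $V[f] \le \liminf_k V[f_k] \le \sup_k \|f_k\|_{BV(M)} < \infty$, so $f \in BV(M)$ as claimed.

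\textbf{Main obstacle.} The technical core is the localisation-and-transfer step: verifying that multiplication by the cutoff $\eta_i$ preserves $BV$ with a uniform bound and that pulling back by a smooth chart diffeomorphism between a bounded Riemannian chart and a bounded Euclidean domain preserves $BV$ with equivalent norms. Both are routine but slightly delicate, since $\eta_i f_k$ need not be smooth and one must handle the $BV$ product rule in the distributional sense; this can be done either by first approximating $f_k$ by smooth functions via \Cref{prop:smooth_functions_dense_in_bv} and passing to the limit, or by invoking the coordinate-free description of $V[\,\cdot\,]$ and estimating $\int_M \eta_i f_k \div(X)\,\di\vol_g$ by writing $\eta_i \div(X) = \div(\eta_i X) - g(\nabla\eta_i, X)$ and using $\|\nabla\eta_i\|_{L^\infty} \le C$. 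Everything else is a straightforward combination of classical Euclidean $BV$ theory with the previously recorded semicontinuity.
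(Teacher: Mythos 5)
Your argument is correct, but it takes a genuinely different route from the paper. You localise with a finite atlas and a partition of unity, transfer each piece $\eta_i f_k$ to a bounded Euclidean domain using uniform comparability of the metric quantities, invoke the classical Euclidean compactness of $BV \hookrightarrow L^1$, and reassemble via a diagonal subsequence, finishing with lower semicontinuity (\Cref{prop:bv_lower_semicontinuous}). The paper instead avoids charts entirely: it uses the density statement \Cref{prop:smooth_functions_dense_in_bv} to replace $f_k$ by smooth $\varphi_k$ with $\norm{f_k-\varphi_k}_{L^1(M)} + |V[f_k]-V[\varphi_k]| < \tfrac1k$, observes that $(\varphi_k)$ is then bounded in $W^{1,1}(M)$, and applies the Rellich--Kondrachov theorem on compact manifolds to extract an $L^1$-convergent subsequence, after which the same lower semicontinuity argument gives $f \in BV(M)$. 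The trade-off: your route is self-contained modulo classical Euclidean $BV$ theory but requires the product-rule/coordinate-transfer bookkeeping you flag as the main obstacle (including the small wrinkle about Lipschitz boundaries of chart images, which is harmless here because $\eta_i f_k$ is compactly supported in $U_i$, so one may extend by zero to a ball); the paper's route is shorter and stays intrinsic, at the price of citing the manifold versions of the density and Rellich--Kondrachov theorems, both of which it has already recorded.
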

\begin{proof} 
    Using \Cref{prop:smooth_functions_dense_in_bv} we find a sequence $\varphi_k \in C^\infty(M)$ such that
    \begin{align} \label{eq:compactness_equation_closeness}
        \norm{f_k - \varphi_k}_{L^1(M)} + \bigl| V[f_k] - V[\varphi_k] \bigr| < \frac 1k\ , \quad k \in \IN.
    \end{align}
    Thus $\varphi_k$ is a bounded sequence in the Sobolev space $W^{1,1}(M)$. The Rellich-Kondrachov Theorem (see \cite[Theorem 2.9]{Hebey:1999}) implies that there exists $f \in L^1(M)$ and a subsequence (not relabeled) such that $\varphi_k \to f$ in $L^1(M)$. Using \eqref{eq:compactness_equation_closeness} we find that $f_k \to f$ in $L^1(M)$. Using \Cref{prop:bv_lower_semicontinuous} we find $f \in BV(M)$, which concludes the proof.
\end{proof}

The following result by Kreuml-Mordhorst provides a way to calculate the variation of $f\in L^1(M)$.
This result is a generalisation of the famous Bourgain-Brezis-Mironsecu formula from the Euclidean setting to Riemannian manifolds (see \cite{Davil:2002, BBM:2001}).
It expresses the total variation as a limit of non-local functionals built using radial mollifiers.

A family of \textit{radial mollifiers} $(\rho_s)_{s\in(0,1)}$ is a set of functions
\begin{align}
    \rho_s \colon \IR_{\ge0} \longrightarrow \IR_{\ge0},\ 0 < s < 1
\end{align}
satisfying the following properties:
\begin{enumerate}
    \item Each $\rho_s$ is monotonically decreasing on $\IR_{\ge0}$,
    \item $\displaystyle \int_0^\infty \rho_s(r) \, r^{n-1} \di r = \frac1{\sigma_{n-1}}$ for all $s \in (0,1)$,
    \item $\displaystyle \lim_{s\to0} \int_\delta^\infty \rho_s(r) \, r^{n-1} \di r = 0$ for all $\delta>0$,
    \item $\displaystyle \lim_{s\to0} \sup_{r\in K} \rho_s(r) = 0$ for all compact $K \subset \IR_{\ge0}$.
\end{enumerate}

\newcommand{\KMThm}{\cite[Theorem 1.1]{KreMor:2019}}
\begin{theorem}[\KMThm]
    \label{thm:mordhorst-kreuml}
    Let $f\in L^1(M)$ and $\rho_s$ a family of radial mollifiers.
    Then the variation of $f$ is given by
    \begin{align}
        \lim_{s\to0} \int_M \int_M \frac{|f(x)-f(y)|}{d(x,y)} \, \rho_s\big( d(x,y) \big) \di x \di y
        = k_n V[f],
    \end{align}
    where $k_n$ is the sphere volume ratio introduced in \Cref{prop:sphere_integration}.
\end{theorem}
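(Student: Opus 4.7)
The plan is to establish the matching bounds $\limsup_{s\to 0} I_s(f) \le k_n V[f]$ and $\liminf_{s \to 0} I_s(f) \ge k_n V[f]$, writing $I_s(f)$ for the left-hand side of the claimed identity. If $f \notin BV(M)$ the upper bound is vacuous; conversely, the lower bound will force $f \in BV(M)$ whenever $\liminf_{s \to 0} I_s(f) < \infty$. The two bounds combine to give equality.

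\textbf{Smooth core computation.} First treat $f \in C^\infty(M)$. Compactness of $M$ yields a uniform injectivity radius $r_0 > 0$. Split the inner integral at $r_0$: on $\{d(x,y) > r_0\}$ the integrand is bounded by $(2\norm{f}_{L^\infty}/r_0)\rho_s(d(x,y))$, so mollifier property~(c) kills the tail. On $B_{r_0}(x)$, substitute $y = \exp_x(rw)$ and apply \eqref{eq:polar-coordinates-1} to obtain
\begin{align*}
    I_s(f) = \sigma_{n-1} \int_0^{r_0} \rho_s(r)\, r^{n-2} \int_M \oint_x \bigl| f(\exp_x(rw)) - f(x) \bigr|\, J_x(rw)\, \di w\, \di x\, \di r + o(1).
\end{align*}
Expand $f(\exp_x(rw)) - f(x) = r\, g_x(\nabla f(x), w) + O(r^2)$, combine with $J_x(rw) = 1 + O(r^2)$ from \Cref{cor:jacobian_exp_bound}, and apply \Cref{prop:sphere_integration} to the spherical integral of $|g_x(\nabla f(x), w)|$, yielding
\begin{align*}
    \frac{1}{r} \int_M \oint_x \bigl| f(\exp_x(rw)) - f(x) \bigr|\, J_x(rw)\, \di w\, \di x = k_n \norm{\nabla f}_{L^1(M)} + O(r),
\end{align*}
uniformly in $r \in [0, r_0]$. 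Integration in $r$ with the weight $\sigma_{n-1}\rho_s(r)r^{n-1}$, which by mollifier properties~(b) and~(c) normalises to $1$ in the limit, gives $I_s(f) \to k_n V[f]$.

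\textbf{Extension to $BV(M)$ and the upper bound.} For $f \in BV(M)$ and $\varepsilon > 0$, invoke \Cref{prop:smooth_functions_dense_in_bv} to choose $\varphi \in C^\infty(M)$ with $\norm{f - \varphi}_{L^1(M)} + |V[f] - V[\varphi]| < \varepsilon$. The triangle inequality $I_s(f) \le I_s(\varphi) + I_s(f - \varphi)$, combined with the smooth-case limit for $I_s(\varphi)$, reduces the task to a uniform-in-$s$ estimate $I_s(g) \le k_n V[g]$ for all $g \in BV(M)$. For smooth $g$ this follows from the mean value inequality $|g(\exp_x(rw)) - g(x)| \le \int_0^r |\nabla g(\exp_x(\tau w))|\, \di \tau$, Fubini, the invariance result \Cref{prop:geo_flow_volume}, and the spherical average from \Cref{prop:sphere_integration}; the general case follows by \Cref{prop:smooth_functions_dense_in_bv} and lower semicontinuity (\Cref{prop:bv_lower_semicontinuous}). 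Sending $\varepsilon \to 0$ produces $\limsup_{s \to 0} I_s(f) \le k_n V[f]$.

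\textbf{Lower bound by duality; main obstacle.} For the matching lower bound, test against vector fields: for $X \in \Gamma(TM)$ with $\norm{X(x)}_{g_x} \le 1$, represent $\div(X)(x)$ as a limit as $s \to 0$ of spherical-radial averages of directional increments of $X$ weighted by $\rho_s$. Integration by parts transfers the difference onto $f$, and the pointwise estimate $|g_x(X(x), w)| \le 1$ together with \Cref{prop:sphere_integration} delivers
\begin{align*}
    \int_M f \div(X)\, \di \vol_M \le \frac{1}{k_n}\, \liminf_{s \to 0} I_s(f).
\end{align*}
Taking the supremum over admissible $X$ yields $V[f] \le \liminf_{s \to 0} I_s(f)/k_n$. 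The hardest step is this lower bound: one must design, purely from the radial profile $\rho_s$, an effective $L^1$-approximation of $\div(X)$ on $M$ that simultaneously controls the Jacobian corrections $J_x = 1 + O(r^2)$ from \Cref{cor:jacobian_exp_bound} and captures the sharp constant $k_n$. That constant is dictated entirely by the spherical averaging of \Cref{prop:sphere_integration}, which is exactly why the final formula is independent of any curvature quantity of $M$ beyond its dimension.
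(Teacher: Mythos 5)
First, note what you are up against: the paper does not prove this statement at all — it is imported verbatim from \cite[Theorem 1.1]{KreMor:2019}, and the only "proof" in the paper is that citation. So any argument here is necessarily independent of the paper; judged on its own terms, yours has a genuine gap exactly where you flag the "main obstacle". The lower bound $k_n V[f]\le \liminf_{s\to0} I_s(f)$ is only asserted, not proved: you claim that $\div(X)$ can be represented as an $s\to0$ limit of $\rho_s$-weighted spherical--radial averages of increments of $X$, and that an integration by parts then transfers the increments onto $f$ with the sharp constant $k_n$ and the correct inequality direction after taking absolute values. None of this is constructed, and it is precisely the hard analytic content of the theorem: one must build the approximating kernels from an arbitrary radial profile $\rho_s$, control the cut locus and the Jacobian $J_x(rw)=1+O(r^2)$, and extract $k_n$ from \Cref{prop:sphere_integration} without losing a constant. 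Acknowledging that this is the hardest step does not discharge it; as written you have only established $\limsup_{s\to0} I_s(f)\le k_n V[f]$ (and the genuine limit for smooth $f$), which is half the theorem.

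There are also structural problems in the upper-bound section, though these are repairable. The reduction is circular: if you had the claimed uniform-in-$s$ estimate $I_s(g)\le k_n V[g]$ for all $g\in BV(M)$, you would apply it to $g=f$ directly, and the triangle-inequality step cannot do what you want in any case, because strict approximation (\Cref{prop:smooth_functions_dense_in_bv}) makes $\|f-\varphi\|_{L^1(M)}$ and $|V[f]-V[\varphi]|$ small but does not make $V[f-\varphi]$ small. Moreover, the exact inequality $I_s(g)\le k_n V[g]$ for \emph{fixed} $s$ is false in general on a curved manifold: rewriting $I_s$ in polar normal coordinates introduces the factor $J_x(rw)$, which can exceed $1$, and for fixed $s$ the mollifier may carry mass at radii of order one, so only the limsup version of the bound is available (which is all you need). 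Finally, the passage from smooth $g$ to $g\in BV(M)$ requires lower semicontinuity of $I_s$ under $L^1$-convergence — Fatou along an a.e.-convergent subsequence — not \Cref{prop:bv_lower_semicontinuous}, which concerns $V$ and gives an inequality pointing the wrong way. Your smooth-core computation of the limit via \eqref{eq:polar-coordinates-1}, \Cref{cor:jacobian_exp_bound} and \Cref{prop:sphere_integration} is sound.
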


\subsection{The mean geodesic variation}

To connect the total variation $V[f]$ with Riemannian geometry, we need a way to measure how $f$ varies along geodesics.
The classical difference quotient approach in $\mathbb{R}^n$ measures $|f(x+h) - f(x)|/|h|$ for small displacements $h$.
On a manifold, we replace the displacement $x \longmapsto x+h$ with the geodesic flow: we follow geodesics of length $r$ from each point $x$ in all directions $w \in S_xM$ and average the resulting variation.
This leads to the following definition.

\begin{definition}
    Let $f \in L^1(M)$.
    The \emph{mean geodesic variation} of $f$ is the function $\gd{f} \colon \IR \longrightarrow \IR$ defined for each $r \in \IR$ by
    \begin{align}
        \gd{f}(r)
        \coloneqq \fint_{SM} \bigl| \hat f(\theta^r) - \hat f(\theta) \bigr| \di \theta,
    \end{align}
    where $\hat f \colon SM \longrightarrow \IR$ is the constant lift of $f$.
\end{definition}

To make the geometric meaning of this definition more apparent, we may rewrite $\gd{f}$ in terms of the exponential map using \Cref{prop:liouville_disintegration}.
For each $r \in \IR$, we have
\begin{align}
    \gd{f}(r)
    = \int_M \oint_x \bigl| f\bigl( \exp_x(rw) \bigr) - f(x) \bigr| \di w \di x .
\end{align}
For fixed $x \in M$, the inner integral measures the mean variation of $f$ along all geodesics of length $r$ emanating from $x$.
The outer integral over $M$ then collects these averages over all base points.

It follows from the definition and the properties of the Liouville measure that for each $r \in \IR$ the map $f \longmapsto \gd{f}(r)$ satisfies the triangle inequality $\gd{f+g}(r) \le \gd{f}(r) + \gd{g}(r)$ and is absolutely homogeneous $\gd{\alpha f}(r) = |\alpha| \gd{f}(r)$.
In other words, $\gd{\bullet}$ is a one-parameter family of seminorms on $L^1(M)$ indexed by $r\in\IR$.
By the same arguments, one moreover obtains the reverse triangle inequality:
\begin{align}
    \label{eq:geodesic_variation_reverse_triangle_inequality}
    |\gd{f} - \gd{g}| \leq G_{f - g},\ f,g \in L^1(M).
\end{align}

For fixed $f$ and as a function of $r$, the mean geodesic variation has the following properties.
\begin{proposition} \label{prop:geodesic_variation_properties}~
    Let $f\in L^1(M)$ and $r,s\in\mathbb{R}$. Then $\gd{f}$ satisfies the following properties:
    \begin{enumerate}
        \item (Uniform bounds) $0 = \gd{f}(0) \leq \gd{f}(r) \leq 2\norm{f}_{L^1(M)}$.
        \item (Reflection invariance) $\gd{f}(-r) = \gd{f}(r)$.
        \item (Subadditivity) $\gd{f}(r+s) \le \gd{f}(r) + \gd{f}(s)$.
        \item (Reverse subadditivity) $\bigl| \gd{f}(r)-\gd{f}(s) \bigr| \le \gd{f}\bigl( |r-s| \bigr)$.
        \item (Continuity) $\gd{f}$ is continuous on $\IR$.
    \end{enumerate}
\end{proposition}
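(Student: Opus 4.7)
The five properties all reduce to two ingredients: the invariance of the normalised Liouville measure under the geodesic flow (\Cref{prop:geo_flow_volume}) and elementary pointwise inequalities for the absolute value. My plan is to dispatch (a)--(c) directly from these, to derive (d) as a formal consequence of (c) and (b), and to spend the bulk of the effort on the only nontrivial item, the continuity statement (e).

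For (a), the equality $\gd{f}(0) = 0$ is immediate from $\Phi_0 = \mathrm{id}$, and the upper bound follows from the pointwise inequality $|\hat f(\theta^r) - \hat f(\theta)| \leq |\hat f(\theta^r)| + |\hat f(\theta)|$: integrating, then applying flow invariance (\Cref{prop:geo_flow_volume}(a)) to the first term, and using the constant-lift identity (\Cref{prop:constant_lift}), collapses each integral to $\norm{f}_{L^1(M)}$. For (b), I substitute $\theta = \eta^r$ in the defining integral of $\gd{f}(-r)$; this is measure-preserving by \Cref{prop:geo_flow_volume}(a) and converts the integrand into that of $\gd{f}(r)$. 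For (c), the pointwise triangle inequality
\begin{align}
    |\hat f(\theta^{r+s}) - \hat f(\theta)| \leq |\hat f(\theta^{r+s}) - \hat f(\theta^s)| + |\hat f(\theta^s) - \hat f(\theta)|,
\end{align}
combined with the flow law $\theta^{r+s} = (\theta^s)^r$ and the substitution $\eta = \theta^s$ in the first summand, produces subadditivity. For (d), assume without loss of generality $r \geq s$: then (c) gives $\gd{f}(r) \leq \gd{f}(r-s) + \gd{f}(s)$, and the reverse inequality follows by symmetry, with signs absorbed using (b).

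The real content lies in (e). By (d), $|\gd{f}(r+h) - \gd{f}(r)| \leq \gd{f}(|h|)$, so it suffices to show $\gd{f}(h) \to 0$ as $h \to 0$. This is essentially the continuity of translations in $L^1$ transferred to the geodesic flow. My strategy is a density argument built on the reverse triangle inequality~\eqref{eq:geodesic_variation_reverse_triangle_inequality} and part~(a):
\begin{align}
    \gd{f}(h) \leq G_{f-\varphi}(h) + \gd{\varphi}(h) \leq 2\norm{f-\varphi}_{L^1(M)} + \gd{\varphi}(h),
\end{align}
valid for any $\varphi \in C^\infty(M)$. For such a smooth $\varphi$, the map $\theta \mapsto \hat\varphi(\theta^h)$ converges pointwise to $\hat\varphi(\theta)$ as $h \to 0$ by continuity of the geodesic flow and is uniformly bounded by $\norm{\varphi}_{L^\infty(M)}$, so dominated convergence delivers $\gd{\varphi}(h) \to 0$. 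Picking $\varphi$ to approximate $f$ within $\eps/4$ in $L^1(M)$ (smooth functions are dense in $L^1$ on the compact manifold $M$) and then sending $h \to 0$ yields $\limsup_{h \to 0} \gd{f}(h) \leq \eps/2$ for every $\eps > 0$. The only genuine obstacle is this qualitative $L^1$-continuity step; once it is in place, the remaining properties are purely formal consequences of the flow invariance.
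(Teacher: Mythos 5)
Your proposal is correct and follows essentially the same route as the paper: parts (a)--(d) from flow invariance of the Liouville measure plus pointwise triangle inequalities, and (e) by reducing to continuity at $0$ via reverse subadditivity, then a density argument combining the reverse triangle inequality~\eqref{eq:geodesic_variation_reverse_triangle_inequality} with the uniform bound from (a). The only (harmless) difference is that you approximate by smooth functions and justify $\gd{\varphi}(h)\to 0$ via dominated convergence, whereas the paper approximates by continuous functions and simply invokes smoothness of the geodesic flow; your version actually spells out that step in slightly more detail.
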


\begin{proof}~
\begin{enumerate}

    \item $\gd{f}(0) = 0$ and $0 \leq \gd{f}(r)$ are immediate from the definition.
    The triangle inequality and the invariance of the Liouville measure (\Cref{prop:geo_flow_volume}) yield
    \begin{align}
        \gd{f}(r) 
        &\leq \fint_{SM} \lvert \hat f(\theta^r ) \rvert \di\theta + \fint_{SM} \lvert \hat f(\theta) \rvert \di\theta
        = 2 \norm{f}_{L^1(M)}.
    \end{align}

    \item The invariance of the Liouville measure under the geodesic flow (\Cref{prop:geo_flow_volume}) implies
    \begin{align}
        \gd{f}(-r) 
        &= \fint_{SM} \lvert \hat f(\theta^{-r}) - \hat f(\theta) \rvert \di \theta
        = \fint_{SM} \lvert \hat f(\theta) - \hat f(\theta^r) \rvert \di \theta
        = \gd{f}(r).
    \end{align}
    
    \item The triangle inequality and the invariance of the Liouville measure (\Cref{prop:geo_flow_volume}) imply 
    \begin{align}
    \begin{multlined}
        \gd{f}(r+s)
        = \fint_{SM} \lvert \hat f(\theta^{r+s}) - \hat f(\theta) \rvert \,\di\theta\\
        \le \fint_{SM} \lvert \hat f(\theta^{r+s}) - \hat f(\theta^s) \rvert \di \theta 
            + \fint_{SM} \lvert \hat f(\theta^{s}) - \hat f(\theta) \rvert \,\di\theta
        = \gd{f}(r) + \gd{f}(s).
    \end{multlined}
    \end{align}

    \item Replacing $r$ by $r-s$ in c) yields $\gd{f}(r)-\gd{f}(s) \le \gd{f}(r-s) = \gd{f}(|r-s|)$,
    where the last step comes from the symmetry proven in b).
    The claim follows after reversing the role of $r$ and $s$.
        
    \item Due to the reverse subadditivity $|\gd{f}(r)-\gd{f}(s)| \le \gd{f}(|r-s|)$ in c), it suffices to show continuity at $r=0$.
    Let $\eps>0$.
    Since $C^0(M)$ is dense in $L^1(M)$, we can find $g \in C^0(M)$ such that $\norm{f - g}_{L^1(M)} \le \tfrac{\eps}{2C}$.
    Since the geodesic flow is smooth, $\gd{g}$ is continuous on $\IR_{\ge0}$.
    In particular, $\gd{g}(r) \to \gd{g}(0) = 0$ for $r\to0$.
    This allows us to pick $r_0>0$ such that $\gd{g}(r) \le \tfrac{\eps}{2}$ for all $0<r<r_0$.
    Together with the reverse triangle inequality of the seminorm $\gd{\bullet}(r)$ in~\eqref{eq:geodesic_variation_reverse_triangle_inequality} and the homogeneous upper bound in d), we obtain the following estimate for $r\in(0,r_0)$:
    \begin{align}
        \begin{multlined}
        \gd{f}(r)
        \le \gd{g}(r) + \bigl| \gd{f}(r) - \gd{g}(r) \bigr|
        \le \gd{g}(r) + \gd{f-g}(r)\\
        \le \gd{g}(r) + C\norm{f-g}_{L^1(M)}
        \le \tfrac{\eps}{2} + C \tfrac{\eps}{2C} = \eps.
        \end{multlined}
    \end{align}

\end{enumerate}
\end{proof}

\subsection{The mean geodesic difference quotient}

The mean geodesic variation $\gd{f}(r)$ measures the average $L^1$--change of $f$ along geodesics.
Normalising $\gd{f}(r)$ by the length $r$ of the geodesics leads to the natural Riemannian analogue of the $L^1$--difference quotients used to describe $BV(\IR^n)$ in terms of finite difference quotients \cite{Galerne:2011}.

\begin{definition}
    Let $f \in L^1(M)$.
    The \emph{mean geodesic difference quotient} of $f$ is the function ${Q_f \colon \IR_{>0} \longrightarrow \IR}$ defined for each $r > 0$ by
    \begin{align}
        Q_f(r)
        \coloneqq \frac{\gd{f}(r)}{r}
        = \fint_{SM} \biggl| \frac{\hat f(\theta^r) - \hat f(\theta)}{r} \biggr| \di \theta ,
    \end{align}
    where $\hat f \colon SM \longrightarrow \IR$ is the constant lift of $f$.
\end{definition}

We can again rewrite $Q_f$ in terms of the exponential map using \Cref{prop:liouville_disintegration}.
More precisely, for each $r > 0$, we have
\begin{align}
    Q_f(r)
    = \int_M \oint_x \biggl| \frac{f\bigl( \exp_x(rw) \bigr) - f(x)}{r} \biggr| \di w \di x ,
\end{align}
so $Q_f$ measures the absolute difference quotient of $f$, averaged over all geodesics of length $r$.

Since $G_f$ is continuous, so is $Q_f$ except possibly in $r = 0$.
In the remaining parts of \cref{sec:bv_on_manifolds}, we analyse $Q_f$ as a function on $\IR_{> 0}$, with particular emphasis on its behaviour as $r \to 0^+$.

\begin{proposition}\label{prop:geodesic_quotient_properties}
    Let $f\in L^1(M)$. Then
    \begin{enumerate}
        \item (Continuity) $Q_f$ is continuous on $\IR_{>0}$.
        \item (Integer Contraction) $Q_f(mr) \le Q_f(r)$ for all $r\in\IR_{>0}$ and $m\in\IN$.
    \end{enumerate}
\end{proposition}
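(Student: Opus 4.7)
The plan is to read off both statements directly from the corresponding properties of $G_f$ established in \Cref{prop:geodesic_variation_properties}, using the defining relation $Q_f(r)=G_f(r)/r$.

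For (a), I would note that $G_f$ is continuous on all of $\IR$ by \Cref{prop:geodesic_variation_properties}(e), while the map $r\mapsto 1/r$ is continuous on $\IR_{>0}$. Since $Q_f$ is the product of these two functions on $\IR_{>0}$, it is continuous there. No further work is needed.

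For (b), the key observation is that subadditivity of $G_f$ gives, by induction on $m\in\IN$, the bound
\begin{align}
    G_f(mr) \le m\,G_f(r) \qquad \text{for all } r\in\IR_{>0},\ m\in\IN.
\end{align}
Indeed, the case $m=1$ is trivial, and if $G_f(mr)\le m\,G_f(r)$, then \Cref{prop:geodesic_variation_properties}(c) applied to $mr$ and $r$ yields $G_f((m+1)r) \le G_f(mr) + G_f(r) \le (m+1)\,G_f(r)$. Dividing the iterated estimate by $mr$ gives
\begin{align}
    Q_f(mr) = \frac{G_f(mr)}{mr} \le \frac{m\,G_f(r)}{mr} = \frac{G_f(r)}{r} = Q_f(r),
\end{align}
which is the claimed integer contraction.

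There is no substantial obstacle here, since both parts follow mechanically from \Cref{prop:geodesic_variation_properties}. The only small care point is the inductive step for subadditivity, but this is a one-line argument and does not require any additional geometric input beyond what is already packaged into the properties of $G_f$.
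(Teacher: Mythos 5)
Your argument is correct and is essentially the same as the paper's: part (a) follows from the continuity of $G_f$ (\Cref{prop:geodesic_variation_properties}e), and part (b) from iterating subadditivity to get $G_f(mr)\le m\,G_f(r)$ and dividing by $mr$. The paper's proof is just a compressed version of exactly this, so nothing is missing.
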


\begin{proof}
a) is a direct consequence of the continuity of $\gd{f}$ (\Cref{prop:geodesic_variation_properties}e).
b) follows by induction on the subadditivity of $\gd{f}$ yields $\gd{f}(mr) \leq m \gd{f}(r)$ for all $m\in\IN$, from which the claim for $Q_f(r)$ follows immediately.
\end{proof}

\begin{proposition} \label{prop:geodesic_quotient_limit}
    Let $f\in L^1(M)$.
    Then the right-sided lower and upper limit of $Q_f$ for $r\to0^+$ agree:
    \begin{align}
        \liminf_{r\to0^+} Q_f(r) &= \limsup_{r\to0^+} Q_f(r) = \sup_{r>0} Q_f(r).
    \end{align}
\end{proposition}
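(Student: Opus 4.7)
The plan is to establish $\sup_{r>0} Q_f(r) \leq \liminf_{r\to0^+} Q_f(r)$, since the reverse chain $\liminf_{r\to0^+} Q_f(r) \leq \limsup_{r\to0^+} Q_f(r) \leq \sup_{r>0} Q_f(r)$ is immediate from the definition of supremum. This is a classical Fekete-type argument: the integer contraction $Q_f(mr) \leq Q_f(r)$ from \Cref{prop:geodesic_quotient_properties}(b) forces $Q_f$ to be largest near zero, and continuity on $\IR_{>0}$ from \Cref{prop:geodesic_quotient_properties}(a) lets us upgrade this from a statement about the subsequence $\{r_0/m\}_{m\in\IN}$ to one about all sequences tending to $0^+$.

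First I would fix $r_0 > 0$ and $\eps > 0$, and aim to produce $\delta > 0$ such that $Q_f(r) > Q_f(r_0) - \eps$ for all $r \in (0, \delta)$. By continuity of $Q_f$ at $r_0$, there exists $\eta \in (0, r_0)$ with $|Q_f(s) - Q_f(r_0)| < \eps$ for all $s \in (r_0 - \eta, r_0 + \eta)$. For any $r \in (0, \eta)$, the integer $m \coloneqq \lfloor r_0/r \rfloor$ satisfies $mr \in (r_0 - r, r_0] \subset (r_0 - \eta, r_0]$. Combining integer contraction with the continuity estimate gives
\begin{align}
    Q_f(r) \geq Q_f(mr) > Q_f(r_0) - \eps ,
\end{align}
so choosing $\delta = \eta$ works. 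Letting $\eps \to 0^+$ yields $\liminf_{r\to0^+} Q_f(r) \geq Q_f(r_0)$, and since $r_0 > 0$ was arbitrary, we conclude $\liminf_{r\to0^+} Q_f(r) \geq \sup_{r>0} Q_f(r)$.

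There is no genuine obstacle here; the argument is entirely formal once \Cref{prop:geodesic_quotient_properties} is in hand. The only subtlety worth noting is that integer contraction alone gives the desired lower bound only along the countable subsequence $r = r_0/m$, and continuity is what bridges the gap to an honest one-sided limit as $r \to 0^+$. Observe also that the supremum may a priori be infinite (precisely when $f \notin BV(M)$ modulo the constant $k_n$, in view of the forthcoming \Cref{thm:variation_as_limit_of_difference_quotient}), but the argument above is valid for $\sup_{r>0} Q_f(r) \in [0, \infty]$ without modification.
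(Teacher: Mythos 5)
Your proof is correct and follows essentially the same route as the paper: fix a point $r_0>0$, use continuity of $Q_f$ there together with the integer contraction $Q_f(mr)\le Q_f(r)$ to bound $Q_f$ from below near $0$ by $Q_f(r_0)-\eps$. The only cosmetic difference is that you take $m=\lfloor r_0/r\rfloor$ and prove a uniform bound for all small $r$, whereas the paper picks a specific small $t$ realising the liminf and uses $m=\lceil r/t\rceil$; both are the same Fekete-type argument.
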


\begin{proof}
    It suffices to show that $Q_f(r) \le \liminf_{s\to0} Q_f(s)$ for all $r\in\IR_{>0}$.
    Let $r\in\IR_{>0}$ and $\eps>0$.
    Since $Q_f$ is continuous at $r$, there is $\delta>0$ such that $|Q_f(r)-Q_f(s)| < \eps$ for all $s\in\IR_{>0}$ with $|r-s| < \delta$.
    Since the limit inferior is an accumulation point, we find $t\in\IR_{>0}$ with $t<\delta$ such that $|Q_f(t) - \liminf_{s\to0} Q_f(s)| < \eps$.
    Now, we set $m \coloneqq \lceil \frac{r}{t} \rceil \in \IN$, defined as the unique natural number $m$ such that $m-1 < \frac{r}{t} \le m$.
    After reshuffling the terms, we obtain $0 \le mt-r < t$ and in particular $|mt-r| < \delta$.
    Therefore
    \begin{align}
        Q_f(r) &\le \eps + Q_f(mt)
        \le \eps + Q_f(t)
        \le 2\eps + \liminf_{s\to0} Q_f(s).
    \end{align}
    The first inequality follows from the continuity of $Q$ at $r$, the second from \Cref{prop:geodesic_quotient_properties}b), and the third from the definition of limes inferior and the choice of $t$.
    As this is true for all $\eps>0$, the claim follows.
\end{proof}

\subsection{Upper bound via smooth approximation}
\label{subsec:upper_bound}

It turns out that the mean geodesic difference quotient $Q_f$ is bounded from above by a multiple of the total variation of $f$ (see also \cite[Proposition 11]{Galerne:2011}).
In the smooth setting, the increment $f(\exp_x(rw)) - f(x)$ along a geodesic segment of length $r$ can be written as an integral of the differential of $f$ along the geodesic flow, and the invariance of the Liouville measure then yields a bound in terms of the $L^1$--norm of $\nabla f$, uniform in $r>0$.
This gives a pointwise inequality $Q_f(r) \le k_n V[f]$ for $C^1$--functions.
Using the density of smooth functions in $BV(M)$ (\Cref{prop:smooth_functions_dense_in_bv}), we can then transfer this estimate to general $BV$--functions and obtain an upper bound on $\limsup_{r\to0^+} Q_f(r)$ in terms of the total variation. 

\begin{lemma} \label{prop:upper_bound_of_upper_limit_of_Q}
    Let $f \in L^1(M)$, then for all $r>0$,
    \begin{align}
        Q_f(r) \leq k_n V[f].
    \end{align}
\end{lemma}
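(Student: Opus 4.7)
The plan is to prove the inequality first for $C^1$ functions by a direct computation along geodesics, and then extend to arbitrary $f \in L^1(M)$ by smooth approximation. If $V[f] = \infty$, the inequality is trivial, so we may assume $f \in BV(M)$.

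For $f \in C^1(M)$, I would apply the fundamental theorem of calculus along the geodesic $\gamma(t) = \exp_x(tw)$ with $w \in S_xM$ to write
$$f(\exp_x(rw)) - f(x) = \int_0^r g_{\pi(\theta^t)}\bigl(\nabla f(\pi(\theta^t)),\, w^t\bigr) \di t,$$
where $\theta = (x,w) \in SM$ and $\theta^t = (\pi(\theta^t), w^t) \in SM$ (the latter because geodesics have constant speed). Taking absolute values, dividing by $r$, integrating over $SM$ with respect to the Liouville measure, and swapping the order of integration (justified by Fubini, since $\nabla f$ is bounded on the compact manifold $M$) gives
$$Q_f(r) \leq \frac{1}{r}\int_0^r \fint_{SM} \bigl| g_{\pi(\theta^t)}(\nabla f(\pi(\theta^t)),\, w^t) \bigr| \di \theta \di t.$$
The invariance of the normalised Liouville measure under the geodesic flow (\Cref{prop:geo_flow_volume}) eliminates the $t$-dependence of the inner integrand, which then becomes $\fint_{SM} |g_x(\nabla f(x), w)| \di \theta$. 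Applying the disintegration formula (\Cref{prop:liouville_disintegration}) followed by the sphere-average identity (\Cref{prop:sphere_integration}) yields
$$\int_M \oint_x |g_x(\nabla f(x), w)| \di w \di x = k_n \int_M \|\nabla f(x)\|_{g_x} \di x = k_n V[f],$$
using $V[f] = \|\nabla f\|_{L^1(M)}$ for $C^1$ functions. Consequently $Q_f(r) \leq k_n V[f]$ for every $r > 0$ in the smooth case.

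To extend to general $f \in BV(M)$, I would pick $\varphi_k \in C^\infty(M)$ with $\|f - \varphi_k\|_{L^1(M)} + |V[f] - V[\varphi_k]| \to 0$ using \Cref{prop:smooth_functions_dense_in_bv}. For each fixed $r > 0$, the reverse triangle inequality \eqref{eq:geodesic_variation_reverse_triangle_inequality} combined with the uniform $L^1$-bound from \Cref{prop:geodesic_variation_properties}(a) gives
$$|Q_f(r) - Q_{\varphi_k}(r)| \leq \frac{G_{f-\varphi_k}(r)}{r} \leq \frac{2\|f - \varphi_k\|_{L^1(M)}}{r} \longrightarrow 0,$$
which together with $Q_{\varphi_k}(r) \leq k_n V[\varphi_k] \to k_n V[f]$ yields the claim. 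There is no genuine obstacle: the argument is essentially bookkeeping, with the only non-trivial step being the correct use of Liouville invariance to reduce an integral along the orbit of the geodesic flow to a single fibre integral on $SM$, after which the remaining pieces assemble immediately.
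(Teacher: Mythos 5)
Your proposal is correct and follows essentially the same route as the paper: the fundamental theorem of calculus along geodesics plus Liouville invariance and \Cref{prop:sphere_integration} for the smooth case, then smooth approximation via \Cref{prop:smooth_functions_dense_in_bv} with the estimate $G_{f-\varphi}(r)/r \le 2\|f-\varphi\|_{L^1(M)}/r$ for the general case. The only differences are cosmetic (parametrising the geodesic over $[0,r]$ instead of $[0,1]$, and phrasing the approximation step with a sequence rather than a fixed $\eps$-close $\varphi$).
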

\begin{proof}
    If $f$ does not have bounded variation, the inequality is trivially satisfied.
    For $f \in BV(M)$, we prove the bound by smooth approximation.

    \textit{Step 1: The smooth case.}
    Let $f \in C^1(M)$ and fix $x \in M$, $w \in T_x M$ and $r>0$.
    We use the geodesic path $\gamma \colon [0,1] \longrightarrow M$ given by $\gamma(t) = \exp_x(trw)$ and the fundamental theorem of calculus to find
    \begin{align} \label{eq:hadamard-trick}
        f\bigl( \exp_x(rw) \bigr) - f(x)
        &= \int_0^1 \frac{\mathrm d}{\mathrm dt} \bigl( f \circ \gamma \bigr)(t) \di t.
    \end{align}
    We evaluate the derivative with the chain rule:
    \begin{align}
        \frac{\mathrm d}{\mathrm dt}\bigl( f \circ \gamma \bigr)(t)
        &= \di_{\gamma(t)}f ( \dot\gamma(t) )
        = \bigl( \di_{\exp_x(trw)}f \circ \di_{trw} \exp_x \bigr)(rw)
        = r \, \bigl( \di f \circ \Phi_{tr} \bigr)(x,w),
    \end{align}
    where $\Phi$ is the geodesic flow on the unit tangent bundle $SM$.
    Consequently, we can estimate
    \begin{align}
        Q_f(r)
        &= \frac 1r \int_M \oint_x \bigl| f\bigl( \exp_x(rw) \bigr) - f(x) \bigr| \di w \di x
    \\
        &\le \frac 1r \int_M \oint_x
        \int_0^1 \Bigl| \frac{\mathrm d}{\mathrm dt}\bigl( f \circ \gamma \bigr)(t) \Bigr| \di t \di w \di x
    \\
        &= \int_0^1 \int_M \oint_x
        \bigl| ( \di f \circ \Phi_{tr} )(x,w) \bigr|
        \di w \di x \di t
    \\
        &\overset{\text{(1)}}{=}
        \int_0^1 \int_M \oint_x
        \bigl| \di f(x,w) \bigr|
        \di w \di x \di t
    \\
        &
        \overset{\text{(2)}}{=}
        \int_0^1 \di t \int_M \oint_x
        \bigl| g_x\bigl( \nabla f(x), w \bigr) \bigr|
        \di w \di x
    \\
        &\overset{\text{(3)}}{=} k_n \int_M \norm{\nabla f(x)}_{g_x} \di x 
        \; = \; k_n \, V[f].
    \end{align}
    Equality (1) is the invariance of the Liouville measure under the geodesic flow $\Phi$ (\Cref{prop:geo_flow_volume}).
    In equality (2), we first use the definition of the gradient.
    Then we note that the integrand is now independent of $t$, so we can perform the integration over $[0,1]$.
    Finally, \Cref{prop:sphere_integration} yields equality~(3).

    \textit{Step 2: The general case.}
    Let $f \in BV(M)$, $\eps > 0$ and $r>0$.
    By \Cref{prop:smooth_functions_dense_in_bv}, we can find a smooth function $\varphi \in C^\infty(M)$ such that
    \begin{align}
        \norm{f-\varphi}_{L^1(M)} < \frac{r}4 \eps
        \qand
        \bigl| V[f] - V[\varphi] \bigr| < \frac{1}{2k_n} \eps,
        \quad\text{in particular }
        k_n V[\varphi] < \frac\eps2 + k_n V[f].
    \end{align}
    We then manipulate $Q_f(r)$ by adding and subtracting $\hat\varphi(\theta)$ and applying the triangle inequality twice:
    \begin{align}
        Q_f(r)
        &= \frac1r \fint_{SM} \bigl\lvert \hat f(\theta^r) - \hat \varphi(\theta^r) + \hat \varphi(\theta^r) - \hat \varphi(\theta) + \hat \varphi(\theta) - \hat f(\theta) \bigr\rvert \di\theta
    \\
        &\le \frac1r \fint_{SM} \bigl\lvert \hat f(\theta^r) - \hat \varphi(\theta^r) \bigr\rvert \di \theta
        + \frac1r \fint_{SM} \bigl\lvert \hat \varphi(\theta^r) - \hat \varphi(\theta) \bigr\rvert \di \theta
        + \frac1r \fint_{SM} \bigl\lvert \hat \varphi(\theta) - \hat f(\theta) \bigr\rvert \di\theta
    \\
        &\overset{\text{(1)}}{=} \frac2r \, \norm{f-\varphi}_{L^1(M)}
        \; + \; Q_\varphi(r)
    \\
        &\overset{\text{(2)}}{\le} \frac2r \, \norm{f-\varphi}_{L^1(M)} + k_n V[\varphi]
    \\
        &\overset{\text{(3)}}{\le} \eps + k_n V[f].
    \end{align}
    Here, the invariance of the Liouville measure under the geodesic flow (\Cref{prop:geo_flow_volume}) provides us with equality (1).
    Inequality (2) employs the smooth case proven in Step~1.
    Finally, the choice of $\varphi$ yields inequality (3).
    Since $\eps > 0$ was arbitrary, this is the desired result.
\end{proof}

\subsection{Lower bound via radial mollifiers}
\label{subsec:lower_bound}

We will now show that, as $r\to0^+$, the upper limit of the mean geodesic difference quotient is bounded from below by $k_n V[f]$.
This is a consequence of the characterisation of $BV(M)$ due to Kreuml and Mordhorst \cite{KreMor:2019}.
By choosing a particularly simple family of mollifiers supported in small geodesic balls, and rewriting the corresponding double integral on $M \times M$ in polar normal coordinates, we obtain an average of the geodesic difference quotients $Q_f(r)$ over small radii.
Uniform control on the Jacobian of the exponential map then allows us to pass to the limit and deduce the desired bound.

\begin{lemma} \label{prop:lower_bound_of_upper_limit_of_Q_Mordhorst}
    Let $f \in L^1(M)$, then
    \begin{align}
        k_n V[f] &\le \limsup_{r\to0} Q_f(r).
    \end{align}
\end{lemma}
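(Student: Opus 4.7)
The plan is to apply the Kreuml--Mordhorst formula of \Cref{thm:mordhorst-kreuml} with a particularly simple family of radial mollifiers, rewrite the resulting double integral on $M\times M$ via the polar change of variables \eqref{eq:polar-coordinates-1}, and control the Jacobian correction using \Cref{cor:jacobian_exp_bound}.

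Concretely, I would work with
\begin{align}
    \rho_s(r) \coloneqq \frac{n}{\sigma_{n-1} s^n}\,\ind_{[0,s]}(r),
\end{align}
which is monotone decreasing on $\IR_{\ge 0}$, satisfies $\int_0^\infty \rho_s(r)\,r^{n-1}\di r = \sigma_{n-1}^{-1}$, and concentrates at $0$ as $s \to 0$; one checks directly that this defines a family of radial mollifiers. Since $M$ is compact, its injectivity radius is positive, so for $s$ below this radius the measure $\rho_s(d(x,y))\di x$ is supported on the normal neighbourhood of each $y$. Applying \eqref{eq:polar-coordinates-1} pointwise in $y$ and then Fubini yields
\begin{align}
    \int_M\int_M \frac{|f(x)-f(y)|}{d(x,y)}\,\rho_s(d(x,y))\di x\di y
    = \int_0^s \frac{n\,r^{n-1}}{s^n}\,A_f(r)\di r,
\end{align}
where $A_f(r) \coloneqq \int_M \oint_y \frac{|f(\exp_y(rw)) - f(y)|}{r}\,J_y(rw)\di w\di y$ is the Jacobian-weighted analogue of $Q_f(r)$.

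To absorb the Jacobian factor, I invoke \Cref{cor:jacobian_exp_bound} to obtain constants $r_0, C>0$ with $J_y(rw) \le 1+Cr^2$ uniformly in $(y,w)\in SM$ for $r\in[0,r_0]$; in particular $A_f(r) \le (1+Cr^2)\,Q_f(r)$ on $(0,r_0]$. Writing $L \coloneqq \limsup_{r\to 0^+} Q_f(r)$, I may assume $L<\infty$ (otherwise the desired inequality is trivial). For every $\eta>0$, choose $\delta>0$ so that $Q_f(r) \le L+\eta$ on $(0,\delta)$. Then, for $s<\min(\delta,r_0)$, using $\int_0^s \frac{n r^{n-1}}{s^n}\di r = 1$,
\begin{align}
    \int_0^s \frac{n\,r^{n-1}}{s^n}\,A_f(r)\di r \;\le\; (L+\eta)(1+Cs^2).
\end{align}
Letting $s\to 0$ and applying \Cref{thm:mordhorst-kreuml} to the left-hand side yields $k_n V[f]\le L+\eta$, and since $\eta>0$ was arbitrary this gives $k_n V[f]\le\limsup_{r\to 0^+} Q_f(r)$.

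The main obstacle is ensuring that the Jacobian correction is negligible in the limit $s\to 0$; this is precisely where compactness of $M$ enters, through the uniform quadratic bound of \Cref{cor:jacobian_exp_bound}. Everything else is a direct interplay between the polar disintegration, Fubini's theorem, and the definition of the limit superior.
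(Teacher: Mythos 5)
Your proof is correct and takes essentially the same route as the paper: both apply the Kreuml--Mordhorst formula (\Cref{thm:mordhorst-kreuml}) to an indicator-type mollifier supported in $[0,s]$, rewrite the double integral in polar normal coordinates via \eqref{eq:polar-coordinates-1}, and absorb the Jacobian using the uniform bound $J_x(rw)\le 1+Cr^2$ from \Cref{cor:jacobian_exp_bound} before comparing with $\limsup_{r\to0^+}Q_f(r)$. The only difference is cosmetic: the paper chooses $\rho_s(r)=\tfrac{1}{\sigma_{n-1}\,s\,r^{n-1}}\ind_{[0,s]}(r)$, turning the double integral into a uniform average of the Jacobian-weighted quotient over $(0,s)$, whereas your constant mollifier yields an $r^{n-1}$-weighted average; both averages are dominated by $\sup_{0<r<s}Q_f(r)$ up to a factor $1+Cs^2$, so the conclusion is identical.
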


\begin{proof}
The following set of piecewise-continuous, bounded functions provides a family of radial mollifier:
\begin{align}
    \rho_s(r) = \frac1{\sigma_{n-1} \, s \, r^{n-1}} \ind_{[0,s]}(r).
\end{align}
By the result of Kreuml and Mordhorst (\Cref{thm:mordhorst-kreuml}), the total variation of $f$ is given by
\allowdisplaybreaks
\begin{align}
    k_n V[f]
    &= \lim_{s\to0} \int_M \int_M \frac{|f(x)-f(y)|}{d(x,y)} \, \rho_s\big( d(x,y) \big) \di x \di y
\\[6pt]
    &\overset{\text{(1)}}{=} \lim_{s\to0} \frac1s \int_M \frac1{\sigma_{n-1}} \int_{B_s(x)} \frac{|f(y)-f(x)|}{d(x,y)^n} \di y \di x
\\[6pt]
    &\overset{\text{(2)}}{=} \lim_{s\to0} \frac1s \int_M \int_0^s \oint_x \frac{|f(\exp_x(rw))-f(x)|}{r^n} \, J_x(rw) \, r^{n-1} \di w \di r \di x
\\[6pt]
    &\overset{\text{(3)}}{\le} \limsup_{s\to0} \fint_0^s \int_M \oint_x \frac{|f(\exp_x(rw))-f(x)|}{r} (1+Cr^2) \di w \di x \di r
\\[6pt]
    & \leq \limsup_{s \to 0} \sup_{0 < r < s} \bigg\{ \int_M \oint_x \frac{|f(\exp_x(rw))-f(x)|}{r} \di w \di x \bigg\} \underbrace{\fint_0^s (1 + Cr^2) \di r}_{= 1 + \frac C3 s^2}
\\[6pt]
    & = \limsup_{r\to0} \int_M \oint_x \frac{|f(\exp_x(rw))-f(x)|}{r} \di w \di x
\\[6pt]
    & = \limsup_{r\to0} Q(r).
\end{align}
Equality~(1) uses the definition of $\rho_s$, equality~(2) expresses the integration in polar normal coordinates using the formula~\eqref{eq:polar-coordinates-1}, and inequality~(3) uses that $M$ is compact, such that the bound of \Cref{cor:jacobian_exp_bound} holds.
\end{proof}

\subsection{Total variation as limit of the geodesic difference quotient}
\label{subsec:bv_limit}

We are now in the position to relate the mean geodesic difference quotient $Q_f$ to the total variation $V[f]$.
The next result shows that, for every $f \in L^1(M)$, the right-sided limit $\lim_{r\to0^+} Q_f(r)$ coincides with $k_n V[f]$ (possibly $+\infty$).

\begin{theorem} \label{thm:variation_as_limit_of_difference_quotient}
    Let $f \in L^1(M)$, then
    \begin{align}
        \lim_{r\to0^+} Q_f(r) = k_n V[f].
    \end{align}
\end{theorem}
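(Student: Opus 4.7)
The plan is to simply assemble the three preparatory results that have already been established: the pointwise upper bound $Q_f(r) \le k_n V[f]$ from \Cref{prop:upper_bound_of_upper_limit_of_Q}, the asymptotic lower bound $k_n V[f] \le \limsup_{r \to 0^+} Q_f(r)$ from \Cref{prop:lower_bound_of_upper_limit_of_Q_Mordhorst}, and the equality $\liminf_{r\to 0^+} Q_f(r) = \limsup_{r\to 0^+} Q_f(r) = \sup_{r>0} Q_f(r)$ from \Cref{prop:geodesic_quotient_limit}. All of the real work has been done, so this is essentially bookkeeping.

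First, I would dispose of the case $V[f] = +\infty$. In that case the lower bound lemma gives $\limsup_{r\to 0^+} Q_f(r) = +\infty$, and by \Cref{prop:geodesic_quotient_limit} the limit inferior agrees with this limit superior, so the limit exists and equals $+\infty = k_n V[f]$.

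Assume now $f \in BV(M)$. Taking the supremum over $r > 0$ in \Cref{prop:upper_bound_of_upper_limit_of_Q} yields $\sup_{r>0} Q_f(r) \le k_n V[f]$, and combining this with the identity $\limsup_{r\to 0^+} Q_f(r) = \sup_{r>0} Q_f(r)$ from \Cref{prop:geodesic_quotient_limit} gives the upper bound
\begin{align}
    \limsup_{r\to 0^+} Q_f(r) \le k_n V[f].
\end{align}
\Cref{prop:lower_bound_of_upper_limit_of_Q_Mordhorst} provides the matching lower bound, so both inequalities combine to $\limsup_{r\to 0^+} Q_f(r) = k_n V[f]$. A second application of \Cref{prop:geodesic_quotient_limit} shows that the limit inferior equals the limit superior, hence the full limit exists and takes the value $k_n V[f]$, as claimed.

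There is no real obstacle here; the bulk of the technical work was carried out in \Cref{prop:upper_bound_of_upper_limit_of_Q} (smooth approximation plus invariance of the Liouville measure under the geodesic flow) and in \Cref{prop:lower_bound_of_upper_limit_of_Q_Mordhorst} (choice of a concentrated radial mollifier and the Jacobian expansion from \Cref{cor:jacobian_exp_bound}). The only subtle point worth articulating clearly in the write-up is the role of \Cref{prop:geodesic_quotient_limit}: the integer-contraction property $Q_f(mr) \le Q_f(r)$ is what promotes the pointwise upper bound into a bound on $\sup_{r>0} Q_f(r)$ and simultaneously forces the one-sided limit to exist, so that the matching upper and lower asymptotic estimates actually pin down a genuine limit rather than just coinciding $\limsup$ and $\liminf$ separately.
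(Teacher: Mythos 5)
Your proposal is correct and follows exactly the paper's argument: combine the pointwise upper bound of \Cref{prop:upper_bound_of_upper_limit_of_Q}, the lower bound on the limsup from \Cref{prop:lower_bound_of_upper_limit_of_Q_Mordhorst}, and the equality of the one-sided limits from \Cref{prop:geodesic_quotient_limit}. The explicit treatment of the case $V[f]=+\infty$ and the remark on the role of the integer-contraction property are fine but add nothing beyond the paper's own (shorter) proof.
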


\begin{proof}
Since the lower and upper limit agree according to \Cref{prop:geodesic_quotient_limit}, the claim follows by combining the pointwise upper bound in \Cref{prop:upper_bound_of_upper_limit_of_Q} and the lower bound on the limsup in 
\Cref{prop:lower_bound_of_upper_limit_of_Q_Mordhorst}.
\end{proof}

An immediate consequence of Theorem~\ref{thm:variation_as_limit_of_difference_quotient} is a characterisation of the space $BV(M)$ in terms of the mean geodesic variation $G_f$:

\begin{theorem} \label{thm:geodesic_variation_Lipschitz}
    Let $f \in L^1(M)$.
    Then the following statements are equivalent:
    \begin{enumerate}
        \item $f \in BV(M)$.
        \item $\gd{f}$ is Lipschitz continuous.
    \end{enumerate}
    In that case, the right-sided derivative of $\gd{f}$ in $r=0$ exists and takes the value
    \begin{align} \label{eq:thm:derivative_mean_difference}
        G'_f(0) = \|G'_f\|_{L^\infty(\IR)} = k_n V[f].
    \end{align}
\end{theorem}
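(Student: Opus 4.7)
The plan is to read off both directions of the equivalence by combining three ingredients already in place: the pointwise bound $Q_f(r)\le k_n V[f]$ from \Cref{prop:upper_bound_of_upper_limit_of_Q}, the reverse subadditivity $|G_f(r)-G_f(s)|\le G_f(|r-s|)$ from \Cref{prop:geodesic_variation_properties}d), and the limit identity $\lim_{r\to 0^+} Q_f(r) = k_n V[f]$ from \Cref{thm:variation_as_limit_of_difference_quotient}. None of the individual steps is hard; the point is that these three facts already package the theorem together.

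For (a)$\Rightarrow$(b), I would feed the pointwise estimate $G_f(r)\le k_n V[f]\cdot r$ (which is just $Q_f\le k_n V[f]$ rewritten) into the reverse subadditivity to obtain
\begin{align}
    |G_f(r)-G_f(s)| \le G_f(|r-s|) \le k_n V[f] \cdot |r-s|, \qquad r,s\in\IR,
\end{align}
which says exactly that $G_f$ is Lipschitz with constant at most $k_n V[f]$. Conversely, if $G_f$ is Lipschitz with constant $L$, then $G_f(0)=0$ gives $Q_f(r) = G_f(r)/r \le L$ uniformly in $r>0$, and passing $r\to 0^+$ via \Cref{thm:variation_as_limit_of_difference_quotient} yields $k_n V[f]\le L<\infty$, so $f\in BV(M)$.

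For the derivative identity, $G_f(0)=0$ together with \Cref{thm:variation_as_limit_of_difference_quotient} immediately gives
\begin{align}
    G'_f(0) = \lim_{r\to 0^+} \frac{G_f(r)}{r} = \lim_{r\to 0^+} Q_f(r) = k_n V[f].
\end{align}
The $L^\infty$ identity then follows by combining two inequalities: the forward direction already yields $\|G'_f\|_{L^\infty(\IR)}\le k_n V[f]$, while for any Lipschitz function the constant dominates every difference quotient, in particular $Q_f(r)\le \|G'_f\|_{L^\infty(\IR)}$ for all $r>0$, and letting $r\to 0^+$ produces the reverse inequality $k_n V[f]\le \|G'_f\|_{L^\infty(\IR)}$. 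The only subtle point worth flagging is that \Cref{prop:upper_bound_of_upper_limit_of_Q} gives a uniform-in-$r$ bound on $Q_f$ rather than an asymptotic one; it is exactly this uniformity that lets reverse subadditivity upgrade to a global Lipschitz estimate.
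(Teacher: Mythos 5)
Your proposal is correct and follows essentially the same route as the paper's proof: reverse subadditivity plus the bound $Q_f\le k_nV[f]$ gives the Lipschitz estimate, setting $s=0$ and invoking \Cref{thm:variation_as_limit_of_difference_quotient} gives the converse and the value of $G_f'(0)$, and the two-sided comparison of $\|G_f'\|_{L^\infty}$ with the difference quotients settles the last identity. The only cosmetic difference is that you quote the pointwise bound of \Cref{prop:upper_bound_of_upper_limit_of_Q} directly (and a fundamental-theorem-of-calculus argument for the lower bound on $\|G_f'\|_{L^\infty}$) where the paper routes through \Cref{prop:geodesic_quotient_limit} and Rademacher's theorem; the content is the same.
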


\begin{proof}
    Assume that $f \in BV(M)$. We use \Cref{prop:geodesic_variation_properties}c), \Cref{prop:geodesic_quotient_limit} and \Cref{thm:variation_as_limit_of_difference_quotient} to obtain
    \begin{align}
        \bigl| G_f(r) - G_f(s) \bigr| \leq G_f\bigl( |r - s| \bigr) \leq |r - s| \, \sup_{|r - s| > 0} Q_f\bigl( |r - s| \bigr) = k_n V[f] \, |r - s|
    \end{align}
    for all $r,s \in \IR$. Thus $G_f$ is Lipschitz continuous.
    
    Assume now that $\gd{f}$ is Lipschitz continuous. Then we find $L > 0$ such that
    \begin{align}
        |G_f(r) - G_f(s)| \leq L \, |r - s| && \text{for all } r,s \in \IR.
    \end{align}
    Choosing $s = 0$ and using \Cref{thm:variation_as_limit_of_difference_quotient} we find that
    \begin{align}
        k_n V[f] = \limsup_{r \to 0^+} \frac{G_f(r)}{r} \leq L,
    \end{align}
    which shows that $f \in BV(M)$. The formula for the right-sided derivative now follows from \Cref{thm:variation_as_limit_of_difference_quotient}.
    
    To obtain the $L^\infty$-estimate, we note that since $G_f$ is Lipschitz, it is also differentiable almost everywhere by Rademacher's theorem. Let $r > 0$ such that $G_f$ is differentiable in $r$. Then using the pre-Lipschitz property in \Cref{prop:geodesic_variation_properties} we find that
    \begin{align}
        |G_f'(r)| = \lim_{h \to 0^+} \frac{|G_f(r + h) - G_f(r)|}{h} \leq \liminf_{h \to 0^+} \frac{G_f(h)}{h} = k_n \, V[f],
    \end{align}
    which concludes the proof.
\end{proof}

\section{The Riemannian Autocorrelation Function}
\label{sec:riemannian_autocorrelation_function}

In this section, we introduce and study the autocorrelation function on compact, connected, oriented Riemannian manifolds without boundary $(M^n,g)$ of dimension $n \geq 2$.
Using the theory developed in \cref{sec:bv_on_manifolds}, we show that the autocorrelation function defined on manifolds shares the key properties of its Euclidean counterpart.

The Euclidean autocorrelation function $c_\Omega^{\mathrm{Eucl}}(r)$ in~\eqref{eq:eucl_autocorrelation} arises by averaging the covariogram $C_\Omega^{\text{Eucl}}(h) = \vol(\Omega \cap (\Omega + h))$ over all translations $h\in\mathbb{R}^n$ of length $r$.
On a general manifold, however, there is no global notion of translation, and hence no canonical covariogram to average.
One natural approach is to generalise the covariogram to homogeneous spaces, i.e. manifolds that are equipped with a transitive group action that can replace the notion of translations.
But the existence of such an action is a restrictive assumption that may exclude many geometrically interesting manifolds.
We instead bypass the covariogram entirely and define the autocorrelation function essentially by changing the order of integration: first average over geodesics of length $r$ emanating in all directions, then integrate over all starting points.
This is naturally realised by following the geodesic flow on the unit tangent bundle $SM$ and integrating with respect to the Liouville measure, which captures the simultaneous average over all directions and all starting points.

\begin{definition}\label{def:riemannian_autocorrelation}
    Let $\Omega \subset M$ be measurable and let $\widehat \Omega \coloneqq \pi^{-1}(\Omega)$ be its preimage with respect to the canonical projection $\pi \colon SM \longrightarrow M$.
    We define the \emph{autocorrelation function} $c_\Omega \colon \IR \longrightarrow \IR$ for all $r \in \IR$ via
    \begin{align}\label{eq:riemannian_autocorrelation_SM}
        c_\Omega(r) 
        &\coloneqq \fint_{SM}\mspace{-7mu} \ind_{\widehat\Omega}(\theta) \ind_{\widehat\Omega}(\theta^r) \di\theta,
    \end{align}
    or equivalently,
    \begin{align}\label{eq:riemannian_autocorrelation_M}
        c_\Omega(r)
        &= \int_M \oint_x \ind_\Omega(x) \ind_\Omega\bigl( \exp_x(rw) \bigr) \di w \di x.
    \end{align}
\end{definition}

The autocorrelation function $c_\Omega(r)$ captures all the geometric information about how $\Omega$ intersects with itself, on average, under the geodesic flow.
More precisely, the intersection between any two geodesic translates $\Phi_r \widehat{\Omega}$ and $\Phi_s \widehat{\Omega}$ depends only on their relative geodesic displacement $r-s$, not on the individual parameters $r$ and $s$.
\begin{proposition} \label{prop:double_single_autocorrelation_fct}
    Let $\Omega \subset M$ be measurable.
    Then for all $r,s \in \IR$
    \begin{align}
        \voln\bigl( \Phi_r \widehat{\Omega} \cap \Phi_s \widehat{\Omega} \bigr) = c_\Omega(r-s).
    \end{align}
    In particular, the value of the autocorrelation function at $r$ is given by $c_\Omega(r) = \voln\bigl( \widehat{\Omega} \cap \Phi_r \widehat{\Omega} \bigr)$.
\end{proposition}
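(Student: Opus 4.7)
The plan is to reduce the claim to a direct application of the Liouville invariance of the geodesic flow recorded in \Cref{prop:geo_flow_volume}(b). The only non-trivial ingredient is to correctly translate the flowed sets $\Phi_r\widehat\Omega$ and $\Phi_s\widehat\Omega$ into indicator functions, after which everything reduces to a time-translation in the integrand.

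First I would observe that because $\Phi_r$ is a diffeomorphism of $SM$ (with inverse $\Phi_{-r}$), the indicator of a flowed set is simply the pullback of the original indicator along the inverse flow:
\begin{align}
    \ind_{\Phi_r\widehat\Omega}(\theta) \;=\; \ind_{\widehat\Omega}\bigl(\Phi_{-r}(\theta)\bigr) \;=\; \ind_{\widehat\Omega}(\theta^{-r}),
\end{align}
and analogously for $\Phi_s\widehat\Omega$. Consequently,
\begin{align}
    \voln\bigl(\Phi_r\widehat\Omega \cap \Phi_s\widehat\Omega\bigr)
    \;=\; \fint_{SM} \ind_{\widehat\Omega}(\theta^{-r}) \, \ind_{\widehat\Omega}(\theta^{-s}) \di\theta .
\end{align}

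Next I would apply the two-point flow invariance in \Cref{prop:geo_flow_volume}(b) with the function $F(\eta,\zeta) = \ind_{\widehat\Omega}(\eta)\,\ind_{\widehat\Omega}(\zeta)$, at parameters $(-r,-s)$, shifting by $t = r$. This replaces $(\theta^{-r},\theta^{-s})$ by $(\theta^{0},\theta^{r-s}) = (\theta,\theta^{r-s})$, yielding
\begin{align}
    \voln\bigl(\Phi_r\widehat\Omega \cap \Phi_s\widehat\Omega\bigr)
    \;=\; \fint_{SM} \ind_{\widehat\Omega}(\theta) \, \ind_{\widehat\Omega}(\theta^{r-s}) \di\theta
    \;=\; c_\Omega(r-s),
\end{align}
which is precisely the claim. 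Shifting by $t = s$ instead would produce $c_\Omega(s-r)$; these two outputs agree because another application of the same invariance (with $t=r$ on the single-argument version) shows that $c_\Omega$ is even, so the result is unambiguous.

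The ``In particular'' statement is immediate: setting $s=0$ gives $\Phi_0\widehat\Omega = \widehat\Omega$, so $\voln(\widehat\Omega \cap \Phi_r\widehat\Omega) = c_\Omega(r)$. I do not anticipate any genuine obstacle here — the content of the proposition is essentially the semantic content of ``the Liouville measure is flow-invariant'' combined with the defining formula \eqref{eq:riemannian_autocorrelation_SM}; the only care needed is to track the sign of the flow parameter when converting between a flowed set and a pullback indicator.
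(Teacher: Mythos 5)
Your proof is correct and follows essentially the same route as the paper: express the normalised Liouville volume of the intersection as an integral of a product of indicators and invoke the flow invariance from \Cref{prop:geo_flow_volume}(b) to shift the parameters to $(0,r-s)$. The only difference is that you track the sign in $\ind_{\Phi_r\widehat\Omega}(\theta)=\ind_{\widehat\Omega}(\theta^{-r})$ explicitly (the paper writes $\theta^{r},\theta^{s}$ directly), and your remark that the two possible shifts give $c_\Omega(r-s)$ versus $c_\Omega(s-r)$, which coincide by evenness, resolves that harmless discrepancy.
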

\begin{proof}
    By definition we have
    \begin{align}
        \voln\bigl( \Phi_r \widehat{\Omega} \cap \Phi_s \widehat{\Omega} \bigr) 
        = \fint_{SM} \ind_{\widehat\Omega}(\theta^r) \ind_{\widehat\Omega}(\theta^{s}) \di\theta
        = \fint_{SM} \ind_{\widehat\Omega}(\theta^{r-s}) \ind_{\widehat\Omega}(\theta) \di\theta
        = c_\Omega(r-s),
    \end{align}
    where the second equality is exactly the invariance of the Liouville measure under the geodesic flow described in \Cref{prop:geo_flow_volume}, while the final equality is again by definition.
\end{proof}

The following formula, analogous to Matheron's classical result \cite{Matheron:1986} (see also \cite[Lemma 10]{Galerne:2011}), is the crucial link that connects the autocorrelation function $c_\Omega$ to the mean geodesic variation $G_{\chi_\Omega}$.
This relationship enables us to translate properties of $G_f$ into properties of $c_\Omega$ and ultimately to characterise sets of finite perimeter via the autocorrelation function.

\begin{lemma} \label{lemma:matheron}
    Let $r \in \IR$ and $\Omega \subset M$ be measurable.
    Then
    \begin{align}
        c_\Omega(0) - c_\Omega(r) = \tfrac12 \gd{\ind_\Omega}(r).
    \end{align}
\end{lemma}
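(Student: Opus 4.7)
My plan is to prove the identity by exploiting the fact that $\ind_{\widehat\Omega}$ only takes values in $\{0,1\}$, so the absolute difference can be expanded algebraically as a quadratic. Specifically, for $a,b\in\{0,1\}$ one has $|a-b|=a+b-2ab$ (since $a^2=a$, $b^2=b$, and $(a-b)^2=|a-b|$). Applying this pointwise on $SM$ gives
\begin{align}
    \bigl|\ind_{\widehat\Omega}(\theta^r)-\ind_{\widehat\Omega}(\theta)\bigr|
    = \ind_{\widehat\Omega}(\theta^r) + \ind_{\widehat\Omega}(\theta) - 2\,\ind_{\widehat\Omega}(\theta)\,\ind_{\widehat\Omega}(\theta^r).
\end{align}

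I would then integrate this identity against the normalised Liouville measure and split the right-hand side into three separate integrals. The cross term is, by definition, exactly $2\,c_\Omega(r)$, so what remains is to identify the two linear terms with $c_\Omega(0)$. By the flow-invariance of the Liouville measure (\Cref{prop:geo_flow_volume}(a)) the integral of $\ind_{\widehat\Omega}(\theta^r)$ equals that of $\ind_{\widehat\Omega}(\theta)$. On the other hand, $\theta^0=\theta$ and $\ind_{\widehat\Omega}^2=\ind_{\widehat\Omega}$, so
\begin{align}
    c_\Omega(0) = \fint_{SM} \ind_{\widehat\Omega}(\theta)^2\,\di\theta = \fint_{SM}\ind_{\widehat\Omega}(\theta)\,\di\theta.
\end{align}
Thus both linear terms produce $c_\Omega(0)$, and assembling everything yields $G_{\ind_\Omega}(r)=2c_\Omega(0)-2c_\Omega(r)$, which rearranges to the claim.

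There is no real obstacle here: the entire argument is a one-line algebraic expansion followed by an invocation of Liouville invariance. The only conceptual point worth stressing in the write-up is that the identity $|a-b|=a+b-2ab$ is the manifold analogue of Matheron's classical observation that for indicator functions the covariogram and the perimeter-type functional differ only by constants, and that it is precisely this identity that converts the quadratic object $c_\Omega$ into the affine object $G_{\ind_\Omega}$.
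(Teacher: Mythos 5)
Your proposal is correct and follows essentially the same route as the paper's proof: both rest on the algebraic identity $|a-b|=(a-b)^2=a+b-2ab$ for indicator values and on identifying the cross term with $c_\Omega(r)$ and the linear terms with $c_\Omega(0)$. If anything, you are slightly more explicit than the paper in invoking the flow-invariance of the Liouville measure (\Cref{prop:geo_flow_volume}) to equate $\fint_{SM}\ind_{\widehat\Omega}(\theta^r)\,\di\theta$ with $c_\Omega(0)$, a step the paper leaves implicit.
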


\begin{proof}
Since the difference of two characteristic functions only takes the values $\{0,\pm 1\}$, we find that
\begin{align}
    \left| \ind_\Omega(q) - \ind_\Omega(q') \right|
    &= \left( \ind_\Omega(q) - \ind_\Omega(q') \right)^2
    = \ind_\Omega(q)^2 - 2 \ind_\Omega(q) \ind_\Omega(q') + \ind_\Omega(q')^2
\end{align}
Setting $q' \coloneqq x$ and $q \coloneqq \exp_x(rw)$ allows us to rewrite the upper integral as follows:
\begin{align}
    \int_M \oint_x
    \bigl| \ind_\Omega\bigl( \exp_x(rw) \bigr) - \ind_\Omega(x) \bigr| \di w \di x
    &= c_\Omega(0) - 2 \, c_\Omega(r) + c_\Omega(0)
\\
    &= 2 \, c_\Omega(0) - 2 \, c_\Omega(r).
\end{align}
\end{proof}

The following properties follow almost immediately from \Cref{lemma:matheron} and the theory developed in \cref{sec:bv_on_manifolds}. They are consistent with the properties of the autocorrelation function with flat underlying geometry (see \cite{KnuShi:2023, BrKnMC:2023}).

\begin{proposition} \label{prop:autocorrelation_function}
    Let $\Omega \subset M$ be measurable and $r,s \in \IR$.
    Then the autocorrelation function $c_\Omega$ satisfies the following properties.
    \begin{enumerate}
        \item (Uniform Bounds) $0 \leq c_\Omega(r) \leq c_\Omega(0) = |\Omega|$.
        \item (Reflection invariance) $c_\Omega(r) = c_\Omega(-r)$.
        \item (Sum Estimate) $c_\Omega(r) + c_\Omega(s) \le c_\Omega(0) + c_\Omega(r-s)$.
        \item (Pre-Lipschitz estimate) $|c_\Omega(r) - c_\Omega(s)| \le c_\Omega(0) - c_\Omega(r-s)$.
        \item (Complement relation) $c_{\Omega^c}(r) = |M| - 2|\Omega| + c_\Omega(r)$.
    \end{enumerate}
\end{proposition}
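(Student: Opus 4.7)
The plan is to derive (a), (b), and (e) by direct computation from the defining integral \eqref{eq:riemannian_autocorrelation_SM}, exploiting the invariance of the normalised Liouville measure under the geodesic flow (\Cref{prop:geo_flow_volume}) and the constant-lift identity (\Cref{prop:constant_lift}); and to deduce (c) and (d) from the Matheron-style identity $c_\Omega(0) - c_\Omega(r) = \tfrac12 \gd{\ind_\Omega}(r)$ established in \Cref{lemma:matheron}. The latter converts statements about $c_\Omega$ into statements about the mean geodesic variation $\gd{\ind_\Omega}$, for which the required properties have already been collected in \Cref{prop:geodesic_variation_properties}.

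For (a), non-negativity of the integrand is immediate. At $r=0$ the integrand reduces to $\ind_{\widehat\Omega}(\theta)^2 = \ind_{\widehat\Omega}(\theta)$, so \Cref{prop:constant_lift} yields $c_\Omega(0) = |\Omega|$; the upper bound $c_\Omega(r) \le c_\Omega(0)$ then follows from the representation $c_\Omega(r) = \voln_{SM}\bigl(\widehat\Omega \cap \Phi_r\widehat\Omega\bigr)$ of \Cref{prop:double_single_autocorrelation_fct}, combined with $\voln_{SM}(\Phi_r\widehat\Omega) = |\Omega|$ by Liouville invariance. For (b), the substitution $\eta = \theta^{-r}$ in the defining integral, together with \Cref{prop:geo_flow_volume}, directly gives $c_\Omega(-r) = c_\Omega(r)$. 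For (e), I would expand $\ind_{\widehat{\Omega^c}} = 1 - \ind_{\widehat\Omega}$ in the defining integral, distribute the product, and evaluate the three resulting single integrals using \Cref{prop:constant_lift} and Liouville invariance of $\ind_{\widehat\Omega}(\theta^r)$.

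For (c) and (d), I apply \Cref{lemma:matheron} to both sides of the inequality. A short algebraic manipulation shows that the Sum Estimate (c) is equivalent to $\gd{\ind_\Omega}(r-s) \le \gd{\ind_\Omega}(r) + \gd{\ind_\Omega}(s)$. This follows from the subadditivity in \Cref{prop:geodesic_variation_properties}(c), applied to the decomposition $r - s = r + (-s)$, together with the reflection invariance of \Cref{prop:geodesic_variation_properties}(b) to replace $\gd{\ind_\Omega}(-s)$ by $\gd{\ind_\Omega}(s)$. Similarly, the Pre-Lipschitz estimate (d) translates via \Cref{lemma:matheron} into $|\gd{\ind_\Omega}(r) - \gd{\ind_\Omega}(s)| \le \gd{\ind_\Omega}(|r-s|)$, which is precisely the reverse subadditivity of \Cref{prop:geodesic_variation_properties}(d).

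No step presents a genuine obstacle once the machinery of Sections~2 and~3 is in place: every claim either is a one-line computation using Liouville invariance or reduces, via Matheron's identity, to a previously established property of $\gd{\ind_\Omega}$. The only subtle point is in (c), where subadditivity must be applied to the decomposition $r - s = r + (-s)$ rather than to $r + s$, forcing an additional appeal to reflection invariance; once this is noted, the proof amounts to careful bookkeeping of the Matheron relation.
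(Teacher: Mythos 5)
Your proposal is correct and follows essentially the same route as the paper: the substantive parts (c) and (d) are reduced via the Matheron identity of \Cref{lemma:matheron} to the subadditivity and reverse subadditivity of $\gd{\ind_\Omega}$ in \Cref{prop:geodesic_variation_properties}, and (e) is the same direct computation with $\ind_{\Omega^c} = 1 - \ind_\Omega$. The only (immaterial) difference is that you verify (a) and (b) directly from the defining integral and Liouville invariance, whereas the paper also channels them through Matheron's formula and the corresponding properties of the mean geodesic variation.
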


\begin{proof}~
    Using Matheron's formula established by \Cref{lemma:matheron}, statements a)--d) follow immediately from the corresponding properties of the geodesic variation in \Cref{prop:geodesic_variation_properties}.
    The last assertion follows from the relation $\ind_{\Omega^c} = 1 -\ind_\Omega$ together with a direct computation.
\end{proof}

We are now in the position to state the characterisation of sets of finite perimeter in terms of the autocorrelation function.
As a reminder, we say that $\Omega \subset M$ is a set of finite perimeter, if $\ind_\Omega \in \mathrm{BV}(M)$. In that case we write $\Per(\Omega) \coloneqq V[\ind_\Omega]$.

\begin{theorem} \label{thm:autocorrelation_function_Lipschitz}
    Let $\Omega \subset M$ be measurable.
    Then the following statements are equivalent:
    \begin{enumerate}[itemsep=1mm]
        \item $\Omega$ is a set of finite perimeter.
        \item The autocorrelation function $c_\Omega$ is Lipschitz continuous.
    \end{enumerate}
    In that case, the right-sided derivative at $r=0$ is
    \begin{align}
        c'_\Omega(0)
        = -\norm{ c'_\Omega }_{L^\infty(\IR_{\ge0})}
        = - \tfrac12 k_n \Per(\Omega).
    \end{align}
\end{theorem}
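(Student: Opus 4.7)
The plan is to reduce everything to the mean geodesic variation of $\ind_\Omega$ via Matheron's identity (\Cref{lemma:matheron}) and then invoke the $BV$--characterisation obtained in \Cref{thm:geodesic_variation_Lipschitz}. By \Cref{lemma:matheron}, for every $r\in\IR$ one has
\begin{align}
    c_\Omega(r) = c_\Omega(0) - \tfrac12\,G_{\ind_\Omega}(r),
\end{align}
so $c_\Omega$ and $G_{\ind_\Omega}$ differ only by a sign and an additive constant. In particular, $c_\Omega$ is Lipschitz continuous if and only if $G_{\ind_\Omega}$ is Lipschitz continuous, and the Lipschitz constants of the two functions agree up to the factor $\tfrac12$.

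For the equivalence (a)$\Leftrightarrow$(b), I would simply apply \Cref{thm:geodesic_variation_Lipschitz} to $f=\ind_\Omega$: the Lipschitz continuity of $G_{\ind_\Omega}$ characterises $\ind_\Omega \in BV(M)$, which by \Cref{def:total_variation} is the same as $\Omega$ being a set of finite perimeter. Combining with the reduction above yields the desired equivalence.

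For the derivative formula, observe that if $\Omega$ has finite perimeter, then since $c_\Omega(r) - c_\Omega(0) = -\tfrac12 G_{\ind_\Omega}(r)$ and $G_{\ind_\Omega}(0)=0$, dividing by $r>0$ and passing to the limit gives
\begin{align}
    c_\Omega'(0) = \lim_{r\to0^+} \frac{c_\Omega(r)-c_\Omega(0)}{r}
    = -\tfrac12 \lim_{r\to0^+} \frac{G_{\ind_\Omega}(r)}{r}
    = -\tfrac12\, G_{\ind_\Omega}'(0)
    = -\tfrac12\, k_n \Per(\Omega),
\end{align}
where the last equality is \eqref{eq:thm:derivative_mean_difference} applied to $\ind_\Omega$. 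For the $L^\infty$--bound, the relation $c_\Omega = c_\Omega(0) - \tfrac12 G_{\ind_\Omega}$ implies $c_\Omega' = -\tfrac12 G_{\ind_\Omega}'$ almost everywhere (both functions being Lipschitz and hence differentiable a.e. by Rademacher's theorem), so $\|c_\Omega'\|_{L^\infty(\IR_{\ge0})} = \tfrac12\|G_{\ind_\Omega}'\|_{L^\infty(\IR)} = \tfrac12 k_n \Per(\Omega) = -c_\Omega'(0)$.

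There is essentially no hard step here: every non-trivial ingredient has already been established, and the argument is just a bookkeeping exercise tracking the factor $\tfrac12$ and the sign arising from $c_\Omega$ being maximal at $r=0$. The only mild care required is to note that the right-sided limit at $r=0$ in \Cref{thm:geodesic_variation_Lipschitz} translates directly into a right-sided limit for $c_\Omega$ because the additive constant $c_\Omega(0)$ disappears in the difference quotient.
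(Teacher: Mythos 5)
Your proposal is correct and follows exactly the paper's argument: the paper likewise deduces the theorem by combining Matheron's identity (\Cref{lemma:matheron}) with the $BV$--characterisation of \Cref{thm:geodesic_variation_Lipschitz}, only stating the bookkeeping you spell out rather than writing it down. No gaps; your additional care with the sign, the factor $\tfrac12$, and the $L^\infty$--bound via Rademacher is exactly what the paper leaves implicit.
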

\begin{proof}
    For any $r > 0$, \Cref{lemma:matheron} states that $c_\Omega(0) - c_\Omega(r) = \frac 12 G_{\ind_\Omega}(r)$. The theorem then follows from \Cref{thm:geodesic_variation_Lipschitz}.
\end{proof}

We remark that the constant factor $\frac{k_n}2$ is the exact same constant as in the case of flat spaces (see \cite{Galerne:2011, KnuShi:2023, BrKnMC:2023}). In other words: no geometric information is stored in this constant (other than dimension). Instead, the influence of the Riemannian metric on the autocorrelation function to first order is stored in the perimeter.

\section{Application to Pattern Formation}
\label{sec:application_pattern_formation}

In this section, we analyse the asymptotic behaviour of non-local isoperimetric energies on $S^n$ as the interaction length scale $\eps \to 0$, using the techniques developed above. More precisely, let $n \in \IN$, $n \geq 2$, and let $M = S^n$ be the $n$-dimensional unit sphere equipped with the round metric $g$. We denote by $d(x,y)$ the corresponding geodesic distance between $x,y \in M$, and by $\mathrm dx, \di y$ etc. the corresponding volume form. Let $0 < \theta < |M|$ and define the class of admissible sets
\begin{align}
    \AA \coloneqq \{ \Omega \subset M : \ind_\Omega \in BV(M), \ |\Omega| = \theta\}.
\end{align}
Let $\gamma, \eps > 0$. We define the non-local isoperimetric energy
\begin{align}
    E_{\gamma,\eps}(\Omega) \coloneqq \left\{ \begin{array}{ll}
        \displaystyle \Per(\Omega) - \frac{\gamma}{\eps} \int_{M \times M} K_\eps(x,y) \, \bigl| \ind_\Omega(x) - \ind_\Omega(y) \bigr| \di x \di y & \quad \text{if } \Omega \in \AA, \\[10pt]
        +\infty & \quad \text{otherwise,}
    \end{array} \right.
\end{align}
where $K_\eps \colon M \times M \longrightarrow \IR$ is the fundamental solution of the Helmholtz equation
    \begin{align}
        K_\eps(x,y) - \eps ^2 \Delta_x K_\eps(x,y) = \delta_y && \text{for all } x,y \in M,
    \end{align}
where $\Delta$ denotes the Laplace-Beltrami operator with respect to the metric $g$.
The parameter $\gamma$ controls the strength of the non-local interaction relative to the perimeter, while $\eps$ sets the length scale of the interaction.
Such energies arise in models of pattern formation where systems tend to minimise interfaces while maintaining interactions between separated regions \cite{BrKnMC:2023}.

We define the quantity $\gamma_\crit \geq 0$ via
\begin{align} \label{eq:defi_gamma_crit}
    \gamma_\crit \coloneqq \lim_{\eps \to 0} \gamma_\eps,  && \frac{1}{\gamma_\eps} \coloneqq k_n  \int_M \frac{d(x,y)}{\eps} \, K_\eps(x,y) \di x
\end{align}

for some $y \in M$. We show in \Cref{prop:convergence_gamma_eps} that this quantity is independent of $y \in M$ and $0 < \gamma_\crit < \infty$ is given explicitly. In this section, we prove the following $\Gamma$--convergence result regarding the non-local isoperimetric energy $E_{\gamma, \eps}$, where for two measurable sets $\Omega, \Omega' \subset M$, we denote the symmetric difference by $\Omega \bigtriangleup \Omega' \coloneqq (\Omega \setminus \Omega') \cup (\Omega' \setminus \Omega)$ and its volume by $|\Omega \bigtriangleup \Omega'|$.

\begin{theorem} \label{thm:gamma_convergence}
    Let $0 < \gamma < \gamma_{\crit}$. Then $E_{\gamma,\eps} \stackrel{\Gamma}{\longrightarrow} E_{\gamma,0}$ in the $L^1$ topology, where
    \begin{align}
        E_{\gamma,0}(\Omega) \coloneqq \left\{ \begin{array}{lll}
            \bigl( 1 - \frac{\gamma}{\gamma_{\crit}} \bigr) \Per(\Omega)  && \quad \text{if } \Omega \in \AA, \\[6pt]
            +\infty  && \quad \text{else.}
        \end{array}\right.
    \end{align}
    More precisely:
    \begin{itemize}
        \item \textbf{Liminf inequality:} For every measurable $\Omega \subset M$ and for every sequence of measurable sets $\Omega_\eps \subset M$ such that $|\Omega \bigtriangleup \Omega_\eps| \to 0$ we have
        \begin{align}
            E_{\gamma,0}(\Omega) \leq \liminf_{\eps \to 0} E_{\gamma,\eps}(\Omega_\eps).
        \end{align}
        \item \textbf{Limsup inequality:} For every measurable $\Omega \subset M$ there exists a sequence of measurable sets $\Omega_\eps \subset M$ such that $|\Omega \bigtriangleup \Omega_\eps| \to 0$ and
        \begin{align}
            E_{\gamma,0}(\Omega) \geq \limsup_{\eps \to 0} E_{\gamma,\eps}(\Omega_\eps).
        \end{align}
    \end{itemize}
\end{theorem}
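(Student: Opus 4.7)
The plan is to rewrite the non-local term in terms of the mean geodesic difference quotient $Q_{\ind_\Omega}$ from \Cref{sec:bv_on_manifolds}, and then combine the Lipschitz bound of \Cref{thm:geodesic_variation_Lipschitz} with the pointwise limit of \Cref{thm:variation_as_limit_of_difference_quotient} and the concentration of the Helmholtz kernel at the diagonal (captured by $\gamma_\eps\to\gamma_\crit$) to pass to the limit $\eps\to 0$.

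\textbf{Step 1: reformulation.} Because $S^n$ is homogeneous under $SO(n+1)$, the Helmholtz kernel satisfies $K_\eps(x,y)=k_\eps(d(x,y))$ for some radial profile $k_\eps$. Applying the polar coordinate formula \eqref{eq:polar-coordinates-2} to the inner integral of the non-local term, interchanging the order of integration by Fubini, and using the definition of $G_{\ind_\Omega}$ and $Q_{\ind_\Omega}$ produces the key identity
\begin{align}
    \frac{1}{\eps}\int_{M\times M} K_\eps(x,y)\,\bigl|\ind_\Omega(x)-\ind_\Omega(y)\bigr|\di x\di y
    = \int_0^\pi \nu_\eps(r)\,Q_{\ind_\Omega}(r)\di r,
\end{align}
where $\nu_\eps(r)\coloneqq\sigma_{n-1}\,k_\eps(r)\,\sin^{n-1}(r)\,r/\eps$. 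Rewriting \eqref{eq:defi_gamma_crit} in polar coordinates (homogeneity makes the choice of $y$ immaterial) then gives $\int_0^\pi\nu_\eps(r)\di r = 1/(k_n\gamma_\eps)$, which tends to $1/(k_n\gamma_\crit)$. I will also use the complementary estimate $\int_R^\pi\nu_\eps(r)\di r\to 0$ for every $R>0$, which follows from the off-diagonal decay of the Helmholtz kernel on $S^n$ encoded in \Cref{prop:convergence_gamma_eps}.

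\textbf{Step 2: liminf inequality.} Suppose $\Omega_\eps\to\Omega$ in $L^1$ with $\liminf_\eps E_{\gamma,\eps}(\Omega_\eps)<\infty$; along a subsequence $\Omega_\eps\in\AA$. By \Cref{thm:geodesic_variation_Lipschitz}, $G_{\ind_{\Omega_\eps}}$ is Lipschitz with constant $k_n\Per(\Omega_\eps)$, so $Q_{\ind_{\Omega_\eps}}(r)\le k_n\Per(\Omega_\eps)$ for every $r>0$. Inserting this uniform bound into the reformulation yields
\begin{align}
    E_{\gamma,\eps}(\Omega_\eps)
    &\ge \Per(\Omega_\eps)-\gamma\cdot k_n\Per(\Omega_\eps)\cdot\frac{1}{k_n\gamma_\eps}
    = \Bigl(1-\frac{\gamma}{\gamma_\eps}\Bigr)\Per(\Omega_\eps).
\end{align}
Since $\gamma<\gamma_\crit$ and $\gamma_\eps\to\gamma_\crit$, the prefactor is eventually bounded below by a positive constant, so $\sup_\eps\Per(\Omega_\eps)<\infty$. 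The compactness and lower semicontinuity results \Cref{prop:bv_compact,prop:bv_lower_semicontinuous} then give $\Omega\in\AA$ (the volume constraint passes to the $L^1$ limit) and $\Per(\Omega)\le\liminf_\eps\Per(\Omega_\eps)$. Passing to the liminf in the previous display produces $E_{\gamma,0}(\Omega)\le\liminf_\eps E_{\gamma,\eps}(\Omega_\eps)$.

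\textbf{Step 3: limsup inequality.} If $\Omega\notin\AA$ the inequality is trivial; otherwise take the constant recovery sequence $\Omega_\eps\equiv\Omega$. The task reduces to showing $\lim_\eps\int_0^\pi\nu_\eps(r)Q_{\ind_\Omega}(r)\di r=\Per(\Omega)/\gamma_\crit$. Given $\delta>0$, \Cref{thm:variation_as_limit_of_difference_quotient} provides $R\in(0,\pi)$ with $|Q_{\ind_\Omega}(r)-k_n\Per(\Omega)|<\delta$ on $(0,R)$. Splitting the integral at $R$, the contribution over $(0,R)$ differs from $k_n\Per(\Omega)\int_0^R\nu_\eps(r)\di r$ by at most $\delta/(k_n\gamma_\eps)$, while the contribution over $(R,\pi)$ is bounded by $k_n\Per(\Omega)\int_R^\pi\nu_\eps(r)\di r$ via the uniform bound on $Q_{\ind_\Omega}$. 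Combining the mass convergence $\int_0^\pi\nu_\eps\di r\to 1/(k_n\gamma_\crit)$ with the tail vanishing from Step 1 and sending $\delta\to 0$ delivers the claim, hence $\lim E_{\gamma,\eps}(\Omega)=(1-\gamma/\gamma_\crit)\Per(\Omega)=E_{\gamma,0}(\Omega)$.

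\textbf{Main obstacle.} The technical heart of the argument is the kernel analysis in Step 1, namely the two properties of $\nu_\eps$: mass convergence and diagonal concentration. These rest on fine asymptotic properties of the Helmholtz kernel on the round sphere that must be isolated in \Cref{prop:convergence_gamma_eps}. Once available, the $\Gamma$-convergence statement becomes a clean interplay between the uniform Lipschitz bound on $Q_{\ind_\Omega}$ (driving the liminf) and its pointwise limit at $r=0$ (driving the limsup), with the subcritical hypothesis $\gamma<\gamma_\crit$ entering only to make the coefficient $1-\gamma/\gamma_\eps$ positive.
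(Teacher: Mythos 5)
Your proof is correct, and it follows the same overall skeleton as the paper (radial reformulation of the non-local term, a uniform bound giving the sharp lower bound $(1-\gamma/\gamma_\eps)\Per$ for the liminf plus compactness, constant recovery sequence for the limsup), but the middle layer is executed differently. The paper routes everything through the autocorrelation function: Matheron's formula (\Cref{lemma:matheron}) turns the non-local term into an integral of $c_\Omega(0)-c_\Omega(r)$ (\Cref{lemma:reformulation_energy_to_autocorrelation1}), an integration by parts against the integrated kernel $\Phi_\eps$ (\Cref{lemma:reformulation_energy_to_autocorrelation2}, \Cref{prop:properties_Phi_eps}) isolates the error term $\int\Phi_\eps(c'_\Omega(r)-c'_\Omega(0))$, whose sign gives \Cref{cor:pointwise_lower_bound}, and the limsup is proved by re-invoking the Ferrers/Bessel asymptotics of $K_\eps$ inside a dominated-convergence argument (\Cref{prop:energy_pointwise_convergence}). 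You instead work directly with $G_{\ind_\Omega}=rQ_{\ind_\Omega}$ (equivalent to the $c_\Omega$-formulation by \Cref{lemma:matheron}, up to a factor $2$), obtain the lower bound from the pointwise estimate $Q_f\le k_nV[f]$ (\Cref{prop:upper_bound_of_upper_limit_of_Q}) with no integration by parts, and handle the limsup by a soft approximate-identity argument: split at a radius $R$ where $Q_{\ind_\Omega}$ is $\delta$-close to its limit $k_n\Per(\Omega)$ (\Cref{thm:variation_as_limit_of_difference_quotient}), use the mass identity $\int_0^\pi\nu_\eps=1/(k_n\gamma_\eps)$ and $\gamma_\eps\to\gamma_\crit$ (\Cref{prop:convergence_gamma_eps}), and kill the tail. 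This confines the hard kernel asymptotics to \Cref{prop:convergence_gamma_eps} and is arguably cleaner than the paper's \Cref{prop:energy_pointwise_convergence}. One small correction of attribution: your tail estimate $\int_R^\pi\nu_\eps(r)\di r\to0$ is best sourced not from \Cref{prop:convergence_gamma_eps} but from the uniform second-moment bound in \Cref{prop:helmholtz_kernel_properties}d) via
\begin{align}
    \int_R^\pi \nu_\eps(r)\di r
    \le \frac{\eps}{R}\int_M \frac{d(x,y)^2}{\eps^2}\,K_\eps(x,y)\di x
    = \O(\eps),
\end{align}
together with $K_\eps\ge0$ (\Cref{prop:helmholtz_kernel_properties}c)), which you also implicitly need for $\nu_\eps\ge0$ when inserting the uniform bound on $Q$; with that reference fixed, every step of your argument is covered by results established in the paper.
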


The strategy to prove \Cref{thm:gamma_convergence} is as follows. In \Cref{lemma:reformulation_energy_to_autocorrelation1} and \Cref{lemma:reformulation_energy_to_autocorrelation2}, we reformulate the energy in terms of the autocorrelation function (similar to \cite[Lemma 4.2]{BrKnMC:2023}, see also \cite[Proposition 2.6]{KnuShi:2023}). Using the sharp bounds on the autocorrelation function, we find optimal lower bounds of the energy in terms of the perimeter functional. Compactness (see \Cref{cor:compactness}) as well as the liminf inequality follow from these sharp bounds. In addition, this reformulation quantifies the error from the limit functional to zeroth order (see also \cite[Lemma 3.1]{MuNoSi:2025}). For the limsup inequality, we again use the reformulation in terms of the autocorrelation function together with explicit computations regarding integrals of the Helmholtz kernel. We find the pointwise limit of the energy $E_{\gamma,\eps}$ as $\eps \to 0$ (see \Cref{prop:energy_pointwise_convergence}) and use the constant sequence as a recovery sequence to conclude the proof of \Cref{thm:gamma_convergence}.

We remark that the the case of the sphere with radius $R$ follows from the case of the unit sphere by rescaling.
To be more precise, consider the the energy
    \begin{align}
        E^{(R)}_{\gamma,\eps}(\Omega) \coloneqq \Per^{(R)}(\Omega) - \frac{\gamma}{\eps} \int_{M^{(R)} \times M^{(R)}} K_{\eps}^{(R)}(x,y) \, |\ind_\Omega(x) - \ind_\Omega(y)| \di x^{(R)} \di y^{(R)},
    \end{align}
where $M^{(R)} \coloneqq (S^n, R^2 g)$ is the $n$-dimensional round sphere of radius $R$, $\Per^{(R)}$ denotes the perimeter with respect to the metric $R^2 g$, $\mathrm dx^{(R)}, \di y^{(R)}$ denote the volume forms with respect to the metric $R^2 g$, and $K_\eps^{(R)} \colon M^{(R)} \times M^{(R)} \longrightarrow \IR$ is the solution of
    \begin{align}
        K_\eps^{(R)}(\filll,y) - \eps^2 \Delta^{(R)} K_\eps^{(R)}(\filll,y) = \delta_y,
    \end{align}
where $\Delta^{(R)}$ is the Laplace-Beltrami operator corresponding to the metric $R^2 g$. Simple calculations reveal that
    \begin{align}
        \Per^{(R)} = R^{n - 1} \Per,  && \mathrm dx^{(R)} = R^n \mathrm dx, && K_\eps^{(R)} = \frac{1}{R^n} K_{\eps/R},
    \end{align}
and thus
    \begin{align}
        E^{(R)}_{\gamma,\eps} = R^{n - 1} E_{\gamma, \frac{\eps}{R}} \stackrel{\Gamma}{\longrightarrow} R^{n - 1} \big(1 - \frac{\gamma}{\gamma_\crit} \big) \Per = \big(1 - \frac{\gamma}{\gamma_\crit} \big) \Per^{(R)}.
    \end{align}
In particular, the value $\gamma_\crit$ is independent of the radius of the sphere.

\subsection{Kernel on the Sphere} \label{sec:helmholtz_kernel}

In this section, we take a closer look at the Helmholtz kernel $K_\eps \colon M \times M \longrightarrow \IR$ defined as the unique solution to the equation
\begin{align} \label{eq:helmholtz_equation}
    K_\eps(\filll, y) - \eps^2 \Delta K_\eps(\filll,y) = \delta_y  && \text{in } M,
\end{align}
where the equation is to be understood in the distributional sense, and $\delta_y$ denotes the Dirac distribution with support $y \in M$. The operator $\Delta$ denotes the Laplace-Beltrami operator (with respect to the round metric). It follows from standard elliptic theory that there exists a unique solution to the equation \eqref{eq:helmholtz_equation} which is smooth outside the diagonal (see also \cite[Theorem 4.8]{CoDaDu:2018} where the solution is given explicitly).

\begin{proposition} \label{prop:helmholtz_kernel_properties}
    Let $\eps > 0$ and let $K_\eps$ be the Helmholtz kernel. Then:
    \begin{enumerate}
        \item $\displaystyle \int_M K_\eps(x,y) \di x = 1$ for all $y \in M$.
        \item $K_\eps$ only depends on the distance, i.e. $K_\eps(x,y) = K_\eps\bigl( d(x,y) \bigr)$ for all $x,y \in M$.
        \item $K_\eps \geq 0$.
        \item As $\eps \to 0$ it holds
            \begin{align}
              \int_M \frac{d(x,y)}{\eps} K_\eps(x,y) \di y + \int_M \frac{d(x,y)^2}{\eps^2} K_\eps(x,y) \di y = \O(1). 
            \end{align}
    \end{enumerate}
\end{proposition}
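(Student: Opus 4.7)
Parts (a), (b), and (c) follow from standard properties of the Helmholtz operator together with the symmetry of the round sphere, so I would dispatch them quickly. For (a), I would integrate~\eqref{eq:helmholtz_equation} over $x\in M$: since $M=S^n$ is closed, the divergence theorem gives $\int_M \Delta_x K_\eps(x,y)\di x = 0$, and the right-hand side integrates to $1$. For (b), the key observation is that for every $\varphi\in\Isom(S^n)=O(n+1)$ the Laplace--Beltrami operator commutes with pullback and $\varphi^\ast\delta_y = \delta_{\varphi^{-1}(y)}$; thus $(x,y)\longmapsto K_\eps(\varphi(x),\varphi(y))$ solves the same distributional equation as $K_\eps$, so they coincide by uniqueness, and since $O(n+1)$ acts transitively on pairs of points at a fixed geodesic distance, $K_\eps(x,y)$ depends only on $d(x,y)$.

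For (c), I would use the subordination representation $(1-\eps^2\Delta)^{-1}=\int_0^\infty e^{-t}\, e^{\eps^2 t\Delta}\di t$, which applied to $\delta_y$ gives
\begin{align}
    K_\eps(x,y) = \int_0^\infty e^{-t}\, p_{\eps^2 t}(x,y)\di t,
\end{align}
where $p_s$ denotes the heat kernel on $(S^n,g)$. Non-negativity of $K_\eps$ then follows immediately from $p_s\ge 0$ (parabolic maximum principle).

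Part (d) is the main point, and I expect it to be the only non-trivial step. Using the representation from (c) together with the standard short-time Gaussian upper bound for the heat kernel on a compact Riemannian manifold, there exist $C,c>0$ such that $p_s(x,y)\le C s^{-n/2} e^{-c\, d(x,y)^2/s}$ for $0<s\le 1$, while $p_s$ is uniformly bounded for $s\ge 1$. Writing the $y$-integral in polar normal coordinates via~\eqref{eq:polar-coordinates-2}, using $\sin(r)^{n-1}\le r^{n-1}$, and performing the change of variables $u=r/\sqrt s$, I obtain
\begin{align}
    \int_M d(x,y)^k\, p_s(x,y)\di y \le C\, s^{k/2},\qquad k\in\{1,2\},\ 0<s\le 1.
\end{align}
Substituting back into the subordination formula, splitting the $t$-integral at $t=1/\eps^2$, and bounding the tail $\int_{1/\eps^2}^\infty e^{-t}\di t$ by $e^{-1/\eps^2}$ yields $\int_M d(x,y)^k K_\eps(x,y)\di y \le C\eps^k$ for $k\in\{1,2\}$, from which the $\O(1)$ bound follows immediately upon dividing by $\eps^k$.

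The only real obstacle is the effective heat-kernel estimate underlying (d): one must combine subordination with short-time Gaussian bounds in a way that keeps the $\eps$--dependence explicit and independent of the base point $y$. A cleaner but less conceptual alternative would be to exploit the explicit representation of $K_\eps$ on $S^n$ in terms of associated Legendre or hypergeometric functions (cf.~\cite{CoDaDu:2018}) and to estimate the resulting Bessel-type integrals directly; I would prefer the heat-kernel route because it is shorter and adapts verbatim to arbitrary compact manifolds.
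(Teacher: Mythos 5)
Your proposal is correct, but it proves parts (b)--(d) by a genuinely different route than the paper. For (a) you do the same thing (pair the equation with the constant function). For (b) the paper simply cites the explicit representation of $K_\eps$ from \cite[Theorem 4.8]{CoDaDu:2018}, whereas your isometry-invariance-plus-uniqueness argument, using that $O(n+1)$ acts transitively on pairs at fixed geodesic distance, is self-contained and arguably cleaner. For (c) the paper argues via the strong maximum principle: on the closed set where $K_\eps(\filll,y)\le 0$ the kernel is superharmonic for $\eps^2\Delta$, forcing it to vanish there; your subordination formula $K_\eps(x,y)=\int_0^\infty e^{-t}\,p_{\eps^2 t}(x,y)\di t$ together with $p_s\ge 0$ gives the same conclusion, at the cost of invoking the heat semigroup. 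The largest divergence is in (d): the paper tests the Helmholtz equation with $d(x,y)^2/\eps^2$, uses the sphere-specific identity $\Delta_x d(x,y)^2 = 2+2(n-1)\,d(x,y)\cot(d(x,y))$, splits $M$ into the hemisphere $B_{\pi/2}(y)$ (where $r\cot r\le 1$) and its complement, and then deduces the first-moment bound from the second moment by splitting at $d(x,y)=\eps$; you instead combine subordination with short-time Gaussian upper bounds for the heat kernel, obtain $\int_M d(x,y)^k\,p_s(x,y)\di y\le C s^{k/2}$ by the scaling substitution, and conclude $\int_M d(x,y)^k K_\eps(x,y)\di y=\O(\eps^k)$ after splitting the $t$-integral at $1/\eps^2$ (the tail being harmless since $d\le\pi$ and $\int_M p_s\di y=1$). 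Your route buys generality -- it works verbatim on any compact manifold and even yields the slightly stronger moment bound $\O(\eps^k)$ rather than just uniform boundedness of the rescaled moments -- but it imports Gaussian heat-kernel estimates as a black box; the paper's argument is more elementary, using only the distributional equation, the maximum principle, and an explicit Laplacian computation available on the round sphere. Both arguments are sound.
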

\begin{proof} 
    The first assertion immediately follows from testing \eqref{eq:helmholtz_equation} with the constant function.
    Assertion b) follows from the explicit representation of the Helmholtz kernel proved in \cite[Theorem 4.8]{CoDaDu:2018}.

    \medskip
    
    To show assertion c), we note that since the integral over $K_\eps(\filll, y)$ is positive, we know that the set $U_y \coloneqq \{x \in M : K_\eps(x,y) \geq 0\} \neq \emptyset$. Let $V_y \coloneqq \{x \in M : K_\eps(x,y) \leq 0\}$. If $V_y = \emptyset$ then there is nothing to show. If $V_y \neq \emptyset$, it is easy to see that $y \notin V_y$. Moreover $V_y$ is compact and
    \begin{align}
        \eps^2 \Delta_x K_\eps(x,y) = K_\eps(x,y) \leq 0  && \text{for all } x \in V_y,
    \end{align}
    so $K_\eps(\filll,y)$ is superharmonic in $V_y$. It follows from the strong maximum principle (see e.g. \cite[Theorem 7.1.7]{Peters:2016}) that $K_\eps(\filll,y) = 0$ in $V_y$, and thus $K_\eps \geq 0$.

    \medskip

    To see assertion d), we first note that it suffices to show the bounds for only one $y \in M$ since the kernel only depends on the distance. We test \eqref{eq:helmholtz_equation} with $\varphi(x) \coloneqq \frac{d^2(x,y)}{\eps^2}$ and find
    \begin{align}
        \int_M \frac{d(x,y)^2}{\eps^2} K_\eps(x,y) \di x  = \int_M \Delta_x (d(x,y)^2) \, K_\eps(x,y) \di x.
    \end{align}
    Using the identity $\Delta_x d(x,y)^2 = 2 + 2(n - 1) \, d(x,y) \, \cot(d(x,y))$ (see e.g. \cite[example~3.23, eq.~(3.84)]{Grigoryan2012}), we further compute
    \begin{align}
        \int_M \Delta_x (d(x,y)^2) \, K_\eps(x,y) \di x  & = 2 + 2 (n - 1) \int_{M_+} d(x,y) \, \cot(d(x,y)) \, K_\eps(x,y) \di x    \\
            & \quad + 2(n - 1) \int_{M_-} d(x,y) \, \cot(d(x,y)) \, K_\eps(x,y) \di x,
    \end{align}
    where $M_+ = B_{\frac \pi 2}(y)$ and $M_- = M \setminus M_+$. For the integration in $M_+$, we note that $r \longmapsto r \cot(r)$ is bounded by $1$ for $0 < r < \frac \pi 2$, and thus 
    \begin{align}
        \int_{M_+} d(x,y) \, \cot(d(x,y)) \, K_\eps(x,y) \di x \leq \int_{M_+} K_\eps(x,y) \di x \leq 1.
    \end{align}
    For the integral over $M_-$, we note that since $K_\eps \xrightarrow{\eps \to 0} 0$ pointwise, for every $\eps_0 > 0$ there exists a constant $\m(\eps_0) > 0$ such that $K_\eps \leq \m$ in $M_-$ for every $0 < \eps < \eps_0$. Thus we find after employing polar coordinates (see \eqref{eq:polar-coordinates-2})
    \begin{align}
        \int_{M_-} d(x,y) \, \cot(d(x,y)) \, K_\eps(x,y) \di x   & \leq \m \int_{M_-} d(x,y) \, \cot(d(x,y)) \di x  \\[6pt]
            & = \m \, \sigma_{n - 1} \int_{\frac{\pi}{2}}^\pi r \cot(r) \sin(r)^{n - 1} \di r < \infty,
    \end{align}
    where we recall $\sigma_{n - 1}$ is the volume of the unit sphere (see \eqref{eq:sphere_volume_formula}). Altogether, the second moment is uniformly bounded provided that $0 < \eps < \eps_0$. Having obtained the uniform bound on the second moment, we find that
    \begin{align}
        \int_M \frac{d(x,y)}{\eps} K_\eps(x,y) \di x  & = \int_{B_{\eps}(y)} \frac{d(x,y)}{\eps} K_\eps(x,y) \di x + \int_{M \setminus B_{\eps}(y)} \frac{d(x,y)}{\eps} K_\eps(x,y) \di x   \\[6pt]
            & \leq \int_{B_{\eps}(y)} K_\eps(x,y) \di x + \int_{M \setminus B_{\eps}(y)} \frac{d(x,y)^2}{\eps^2} K_\eps(x,y) \di x,
    \end{align}
    and thus is uniformly bounded provided $0 < \eps < \eps_0$. 
\end{proof}

We now show that the first moments are converging and thus, that the critical value $\gamma_\crit$ defined in \eqref{eq:defi_gamma_crit} is well defined.

\begin{proposition} \label{prop:convergence_gamma_eps}
    Let $\gamma_\eps$ as in \eqref{eq:defi_gamma_crit}. Then $\gamma_\eps$ is independent of $y$ and a convergent sequence as $\eps \to 0$ with positive limit $\gamma_\crit = 1$.
\end{proposition}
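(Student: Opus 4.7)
The plan is: establish independence of $y$ via homogeneity of $S^n$, reduce $\gamma_\eps$ to a one-dimensional integral via polar coordinates, and identify the limit as a Euclidean Bessel computation. For independence, $S^n$ is a homogeneous Riemannian manifold with isometry group $O(n+1)$ acting transitively. Given $y, y' \in S^n$, fix an isometry $\psi$ with $\psi(y) = y'$. Uniqueness of the Helmholtz fundamental solution combined with isometry-invariance of the Laplace-Beltrami operator gives $K_\eps(\cdot, y') = K_\eps(\psi^{-1}(\cdot), y)$. Since $\psi$ preserves both $d$ and the volume form, the change of variables $x \mapsto \psi^{-1}(x)$ shows the integral in \eqref{eq:defi_gamma_crit} takes the same value at $y$ and $y'$.

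For the limit, fix $y$ and use radial symmetry (\Cref{prop:helmholtz_kernel_properties}b) together with \eqref{eq:polar-coordinates-2} to write
\begin{align}
    \frac{1}{\gamma_\eps} = k_n \sigma_{n-1} \int_0^\pi \frac{r}{\eps} \, K_\eps(r) \, \sin^{n-1}(r) \di r.
\end{align}
Substitute $r = \eps s$ and set $\tilde K_\eps(s) \coloneqq \eps^n K_\eps(\eps s)$. The rescaled kernel $\tilde K_\eps$ solves the radial form of the Helmholtz equation on $S^n$ rewritten in the $s$ variable, in which the curvature term $\eps \cot(\eps s)$ converges to $1/s$ as $\eps \to 0$. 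The limiting ODE is therefore the radial form of $(1 - \Delta) u = 0$ on $\IR^n$, and the normalisation inherited from \Cref{prop:helmholtz_kernel_properties}a) identifies the pointwise limit as the Euclidean Helmholtz fundamental solution $\mathcal K$. The uniform second-moment bound of \Cref{prop:helmholtz_kernel_properties}d), rewritten in the $s$-variable, supplies the domination needed to interchange limit and integral, giving
\begin{align}
    \lim_{\eps \to 0} \frac{1}{\gamma_\eps} = k_n \sigma_{n-1} \int_0^\infty s^n \, \mathcal K(s) \di s = k_n \int_{\IR^n} |z| \, \mathcal K(|z|) \di z.
\end{align}

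To evaluate this limit explicitly, use the representation $\mathcal K(s) = (2\pi)^{-n/2} s^{-(n/2-1)} K_{n/2-1}(s)$, where $K_\nu$ denotes the modified Bessel function of the second kind, together with the identity
\begin{align}
    \int_0^\infty r^\mu K_\nu(r) \di r = 2^{\mu-1} \Gamma\Big(\tfrac{\mu+\nu+1}{2}\Big) \Gamma\Big(\tfrac{\mu-\nu+1}{2}\Big)
\end{align}
applied with $\mu = n/2 + 1$ and $\nu = n/2 - 1$. Combined with the explicit formulae for $\sigma_{n-1}$ and $k_n$ from \Cref{prop:sphere_integration}, a direct cancellation reduces the right-hand side to $1$, so $\gamma_\crit = 1$, matching the flat case \cite{BrKnMC:2023}.

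The main obstacle is the rigorous justification of $\tilde K_\eps \to \mathcal K$ and the dominated convergence in the rescaled integral, which requires uniform control on $\tilde K_\eps$ both near the origin (where the fundamental solution is singular) and up to the cutoff $\pi/\eps$ (where the geometry of the sphere differs most from $\IR^n$). The cleanest route is via the explicit representation of $K_\eps$ on $S^n$ in terms of associated Legendre functions from \cite[Theorem 4.8]{CoDaDu:2018}, which makes the pointwise asymptotic transparent and yields tail estimates uniform in $\eps$; the uniform second-moment bound in \Cref{prop:helmholtz_kernel_properties}d) then furnishes the integrable majorant needed to pass to the limit.
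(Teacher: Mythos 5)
Your overall route is the same as the paper's: reduce to a radial integral, rescale $r=\eps s$, identify the limit kernel as the Euclidean Bessel kernel, and evaluate the same moment integral $\int_0^\infty r^{n/2+1}K_{n/2-1}(r)\di r$ against the explicit formulas for $\sigma_{n-1}$ and $k_n$; your final numerics are consistent with the paper, which obtains $\frac{\sigma_{n-1}}{(2\pi)^{n/2}}\int_0^\infty r^{\frac n2+1}\bessel_{\frac n2 -1}(r)\di r = \frac1{k_n}$, hence $\gamma_\crit=1$. The independence-of-$y$ argument via isometries is fine (the paper gets it from radial symmetry of $K_\eps$, which is the same fact). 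Your first justification of $\tilde K_\eps \to \mathcal K$ (convergence of the coefficients of the radial ODE) is only heuristic -- convergence of coefficients does not by itself give convergence of the selected singular solutions -- but you concede this yourself and fall back on the explicit Legendre/Ferrers representation of \cite[Theorem 4.8]{CoDaDu:2018}, which is exactly the paper's tool.

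The genuine gap is in the limit--integral interchange. The second-moment bound of \Cref{prop:helmholtz_kernel_properties}d) is a bound on an \emph{integral}, not a pointwise bound on the (rescaled) kernel, so it cannot ``furnish the integrable majorant'' for dominated convergence as you claim. Moreover, the uniform asymptotics of the Ferrers function hold only on $0<r<\rho$ for a fixed $\rho<\pi$; they break down near the antipodal point, i.e. near the upper endpoint $s=\pi/\eps$ of your rescaled integral, which is precisely the region you would also need to dominate. The paper's actual argument resolves both issues by splitting at a fixed $\rho<\pi$: the contribution of $M\setminus B_\rho(y)$ to the first moment is made uniformly small using the moment bounds (there $d(x,y)\ge\rho$, so the first moment is controlled by $\eps/\rho$ times the second moment), while on $B_\rho(y)$ the uniform expansion $K_\eps(r)\approx \frac{a_\tau\tau^\mu}{(2\pi)^{\mu+1}\eps^2}\sin(r)^{-\mu}\sqrt{r/\sin(r)}\,\bessel_\mu(\tau r)$ supplies, after rescaling, a genuine integrable majorant of the form $C\,r^{\frac n2+1}\bessel_\mu(r)$, to which dominated convergence applies. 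With that splitting inserted, your proposal becomes the paper's proof.
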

\begin{proof} 
    It suffices to show that the first moment of $K_\eps$ converges as $\eps\to 0$. Let $0 < \eps < \eps_0 \coloneqq \frac{2}{n - 1}.$
    From \Cref{prop:helmholtz_kernel_properties} d) we know that there exists a constant $\m_1 > 0$ that only depends on $n$ such that
    \begin{align}
        \int_M \frac{d(x,y)}{\eps} K_\eps(x,y) \di x \leq \m_1 && \text{for all } 0 < \eps < \eps_0.
    \end{align}
    Since $K_\eps$ only depends on distance, the first moments are independent of $y \in M$ and thus $\gamma_\eps$ is independent of $y$.
    If we set $\beta = \frac1\eps$ in \cite[Theorem 4.8]{CoDaDu:2018},
    this theorem yields an explicit representation formula of the Helmholtz kernel $\eps^2 K_\eps$.
    Thus, for $0 < \eps < \eps_0$ our kernel takes the form
    \begin{align}
        K_\eps(r) = \frac1{\eps^2} \frac{\Gamma(\mu+\nu+1) \, \Gamma(\mu-\nu)}{2 \, (2\pi)^{\mu+1} \sin(r)^\mu} \, \mathrm P^{-\mu}_\nu \bigl( -\cos(r) \bigr)   && \text{for all } 0 < r < \pi,
    \end{align}
    where $\mathrm P^{-\mu}_\nu \colon (-1,1) \longrightarrow \IR$ is the Ferrers conical function of the first kind, and 
    \begin{align}
        \mu = \frac{n}2 - 1, && \nu = -\frac12 + \i \tau,   && \tau^2 \coloneqq \frac1{\eps^2} - \frac{(n-1)^2}4,
    \end{align}
    Note that in the limit $\eps \to 0$ we can expand $\tau = \frac1\eps + \O(1) \to \infty$ or equivalently $\eps\tau = 1 + \O(\eps) \to 1$, and in particular $\O(\tfrac 1\tau) = \O(\eps)$.
    The Ferrers function is defined in terms of the hypergeometric function $_2F_1$ (see \cite[eq.~(2.35)]{CoDaDu:2018}),
    but its exact form is not relevant in this article.
    However, its asymptotic expansion for $\tau\to\infty$ stated in \cite[Theorem 2.17, (2.64)]{CoDaDu:2018} provides us with the following expansion of our Helmholtz kernel: For every $0 < \rho < \pi$ there exists $\tau_0 > 0$, such that for all $\tau > \tau_0$
    \begin{align} \label{eq:asymptotic_ferrers_functions}
        K_\eps(r)
        & = \frac{a_\tau \tau^\mu}{(2\pi)^{\mu+1}\eps^2} \frac1{\sin(r)^\mu} \sqrt{\frac{r}{\sin(r)}} \bessel_\mu(\tau r) \, \big(1 + \O(\tfrac 1\tau)\big)
        && \text{for all } 0 < r < \rho
    \\
        & \text{with }
        a_\tau \coloneqq \Gamma(\mu+\nu+1) \, \Gamma(\mu-\nu) \frac{e^{\pi \tau}}{2\pi \tau^{n-2}}\ ,
    \end{align}
    where $\bessel_\mu \colon \IR_{> 0} \longrightarrow \IR$ denotes the modified Bessel function of the second kind. Its exact form is also not relevant in this article.
    We note that in our situation $\mu - \nu = (\mu + \nu + 1)^*$, where $z^*$ denotes the complex conjugate for $z$. Thus\footnote{Empty products are set to $1$.}
    \begin{align}
        \Gamma(\mu+\nu+1) \, \Gamma(\mu-\nu) = \bigl| \Gamma(\tfrac{n-1}2 + \i\tau) \bigr|^2
        & = \left\{ \begin{array}{ll}
            \displaystyle \frac{\pi}{\cosh(\pi\tau)} \prod_{j = 1}^{\frac{n}2 - 1}  \big((j - \tfrac12)^2 + \tau^2\big)
            & \quad n \in 2\IN,
        \\[16pt]
            \displaystyle \frac{\pi \tau}{\sinh(\pi\tau)} \prod_{j = 1}^{\frac{n-1}2 - 1} \bigl( j^2 + \tau^2 \bigr)
            & \quad n \in 2\IN + 1,
        \end{array}\right.
    \end{align}
    and it follows $a_\tau = 1 + \O(\tfrac 1\tau) = 1 + \O(\eps) \to 1$ in the limit $\eps \to 0$.
    
    To compute the limit of $\gamma_\eps$, let $\delta > 0$. Using \Cref{prop:helmholtz_kernel_properties} we find $0 < \rho < \pi$ such that
    \begin{align}
        \int_{M \setminus B_\rho(y)} \frac{d(x,y)}{\eps} K_\eps(x,y) \di x < \delta    && \text{for all } 0 < \eps < \eps_0,
    \end{align}
    where $B_\rho(y)$ denotes the geodesic ball around $y$ with radius $\rho$.
    So to compute the desired first moment, we can focus on the domain around the reference point $y \in M$.
    Using polar normal coordinates as in \eqref{eq:polar-coordinates-2} and the asymptotic expansion \eqref{eq:asymptotic_ferrers_functions}, we compute for $\tau > \tau_0$
    \begin{align}
        \int_{B_\rho(y)} \frac{d(x,y)}{\eps} K_\eps(x,y) \di x
        & = \frac{\sigma_{n-1}}{\eps} \int_0^\rho r \, K_\eps(r) \sin(r)^{n-1}\di r
    \\
        & = \frac{\sigma_{n-1}}{(2\pi)^{\mu+1}} \frac{a_\tau \tau^\mu}{\eps^3} \int_0^\rho r \sin(r)^\frac{n}2 \sqrt{\frac{r}{\sin(r)}} \bessel_\mu(\tau r) \, \big(1 + \O(\tfrac1\tau)\big) \di r
    \\
        & = \frac {\sigma_{n-1}}{(2\pi)^{\mu+1}} \frac{a_\tau}{(\eps\tau)^3} \int_0^\rho \bigl( \tau r \bigr)^{\frac{n}2+1} \biggl( \frac{\sin(r)}{r} \biggr)^\frac{n-1}2 \bessel_\mu(\tau r) \, \tau \di r \, \big(1 + \O(\tfrac1\tau)\big)
    \\
        & = \frac {\sigma_{n-1}}{(2\pi)^{\mu+1}} \int_0^{\rho \tau} r^{\frac{n}2+1} \biggl( \frac{\sin(r/\tau)}{r/\tau} \biggr)^\frac{n-1}2 \bessel_\mu(r) \di r
        \, \big(1 + \O(\tfrac1\tau)\big)
    \end{align}
    Since the function $r \longmapsto r^{\frac n2 + 1} \bessel_\mu(r)$ is integrable over $\IR_{\ge0}$, it follows from the dominated convergence theorem that
    \begin{align}
        \int_{B_\rho(y)} \frac{d(x,y)}{\eps} K_\eps(x,y) \di x
        & \xrightarrow{\eps \to 0} \frac{\sigma_{n - 1}}{(2\pi)^{\mu+1}} \ubr{\int_0^\infty r^{\frac n2 + 1} \bessel_\mu(r) \di r}{=}{\sqrt \pi \, 2^{\frac{n}2 - 1} \Gamma(\frac{n+1}2)}
        = \frac{1}{k_n},
    \end{align}
    where in the last line we used \Cref{prop:sphere_integration}.
    The integral of the Bessel function is recorded in \cite[p.676]{GraRyz:2015}. Taking the limits $\rho \to \pi$ and then $\delta \to 0$ we obtain the desired convergence. Combining the results of the calculation together with \eqref{eq:defi_gamma_crit} and \Cref{prop:sphere_integration} we obtain
    \begin{align}
        \gamma_\crit    & = \lim_{\eps \to 0}\bigg( k_n \int_M \frac{d(x,y)}{\eps} K_\eps(x,y) \di x \bigg)^{-1} = 1,
    \end{align}
    which was the claim.
\end{proof}

Since the Helmholtz kernel only depends on the distance (see \Cref{prop:helmholtz_kernel_properties} b)), we are able to integrate it radially to define the integrated kernel $\Phi_\eps \colon (0,\pi) \longrightarrow \IR$ via
\begin{align} \label{eq:defi_Phi_eps}
   \Phi_\eps(r) \coloneqq \frac {\sigma_{n - 1}}\eps \int_r^{\pi} K_\eps(r) \, \sin(r)^{n - 1} \di r && \text{for all } r \in (0,\pi).
\end{align}
The following properties of $\Phi_\eps$ follow from the properties of the Helmholtz kernel.

\begin{proposition} \label{prop:properties_Phi_eps}
    Let $\Phi_\eps \colon (0,\pi) \longrightarrow \IR$ given as in \eqref{eq:defi_Phi_eps}. Then
    \begin{enumerate}
        \item $\Phi_\eps \geq 0$.
        \item $\Phi_\eps(0) = \frac 1 \eps$ and $\Phi_\eps(\pi) = 0$.
        \item $\|\Phi_\eps\|_{L^1(I)} = \frac {1}{k_n \gamma_\eps}$.
    \end{enumerate}
\end{proposition}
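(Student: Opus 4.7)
The plan is to prove the three properties in order, each following directly from a corresponding property of the Helmholtz kernel $K_\eps$ established in Proposition~\ref{prop:helmholtz_kernel_properties} together with the polar-coordinates formula~\eqref{eq:polar-coordinates-2}. The entire argument is mechanical, so no step is really an obstacle; the only thing to be careful about is to avoid the notational clash in the definition of $\Phi_\eps$ by using a dummy integration variable $s$ in place of $r$, so that $\Phi_\eps(r) = \tfrac{\sigma_{n-1}}{\eps}\int_r^\pi K_\eps(s)\,\sin(s)^{n-1}\,\di s$.

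For a), I would simply note that $K_\eps(s)\ge 0$ by \Cref{prop:helmholtz_kernel_properties}c) and $\sin(s)^{n-1}\ge 0$ for $s\in[0,\pi]$, so the integrand is pointwise non-negative. For b), the value $\Phi_\eps(\pi)=0$ is immediate from the vanishing integration range. For $\Phi_\eps(0)$, I would apply the polar-coordinates identity~\eqref{eq:polar-coordinates-2} with $f\equiv K_\eps(\,\cdot\,,y)$ (for any fixed $y\in M$) to get
\begin{align}
    \sigma_{n-1}\int_0^\pi K_\eps(s)\,\sin(s)^{n-1}\,\di s
    = \int_{S^n\setminus\{\pm y\}} K_\eps(x,y)\,\di x
    = \int_M K_\eps(x,y)\,\di x
    = 1,
\end{align}
where the last equality is \Cref{prop:helmholtz_kernel_properties}a). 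Dividing by $\eps$ yields $\Phi_\eps(0)=\tfrac{1}{\eps}$.

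For c), I would use Fubini's theorem on the triangle $\{(r,s) : 0\le r\le s\le \pi\}$ to compute
\begin{align}
    \|\Phi_\eps\|_{L^1((0,\pi))}
    = \frac{\sigma_{n-1}}{\eps}\int_0^\pi\!\!\int_r^\pi K_\eps(s)\,\sin(s)^{n-1}\,\di s\,\di r
    = \frac{\sigma_{n-1}}{\eps}\int_0^\pi s\, K_\eps(s)\,\sin(s)^{n-1}\,\di s.
\end{align}
Applying~\eqref{eq:polar-coordinates-2} once more, this time to $f(x)=\tfrac{d(x,y)}{\eps}K_\eps(x,y)$, the right-hand side equals $\int_M \tfrac{d(x,y)}{\eps} K_\eps(x,y)\,\di x$, which is $\tfrac{1}{k_n\gamma_\eps}$ by the defining equation~\eqref{eq:defi_gamma_crit} of $\gamma_\eps$ (valid for any choice of $y$ by \Cref{prop:convergence_gamma_eps}). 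This completes the proof.
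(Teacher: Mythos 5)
Your proof is correct. Parts a) and b) match the paper's (very terse) argument, just spelled out more explicitly via \eqref{eq:polar-coordinates-2} and \Cref{prop:helmholtz_kernel_properties}a). For part c) you take a mildly different route: the paper extracts the weight $r$ by integrating by parts, writing $\int_0^\pi \Phi_\eps(r)\,\di r = \bigl[r\,\Phi_\eps(r)\bigr]_0^\pi - \int_0^\pi r\,\Phi_\eps'(r)\,\di r$ and using $\Phi_\eps(\pi)=0$ to kill the boundary term, whereas you swap the order of integration over the triangle $\{0\le r\le s\le\pi\}$ via Tonelli. The two manipulations are equivalent in substance and both land on $\frac{\sigma_{n-1}}{\eps}\int_0^\pi s\,K_\eps(s)\sin(s)^{n-1}\,\di s$, which is then identified with the first moment $\int_M \frac{d(x,y)}{\eps}K_\eps(x,y)\,\di x = \frac{1}{k_n\gamma_\eps}$ exactly as in the paper; your Tonelli version is arguably slightly cleaner since the integrand is non-negative, so no differentiability of $\Phi_\eps$ or boundary-term bookkeeping is needed. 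Your remark about the dummy variable is also apt, since the paper's formula \eqref{eq:defi_Phi_eps} indeed overloads $r$.
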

\begin{proof} 
    Since $K_\eps \geq 0$, so is $\Phi_\eps$. The second assertions follows from the integrability of $K_\eps$. To show the third assertion, we integrate by parts to find
    \begin{align}
        \int_0^\pi \Phi_\eps(r) \di r  & =  \Big[r \, \Phi_\eps(r) \Big]_{r = 0}^{r = \pi} -\int_0^\pi r \Phi_\eps'(r) \di r  \\[6pt]
            & = \frac{\sigma_{n-1}}{\eps} \int_0^\pi r \sin(r)^{n-1} K_\eps(r) \di r   \\[6pt]
            & = \int_M \frac{d(x,y)}{\eps} K_\eps(x,y) \di x = \frac1{k_n \gamma_\eps},
    \end{align}
    where we used polar normal coordinates in the last line.
\end{proof}

\subsection{Asymptotic Analysis}

In this section, we present the proof of \Cref{thm:gamma_convergence}. We first reformulate the non-local term in $E_{\gamma,\eps}$ in terms of the autocorrelation function. Since the Helmholtz kernel only depends on the distance, the autocorrelation function appears naturally in the non-local term of the energy.

\begin{lemma}  \label{lemma:reformulation_energy_to_autocorrelation1}
    Let $\Omega \subset M$ be measurable. Then, for every $\eps > 0$,
    \begin{align}
        \int_{M \times M} K_\eps(x,y)  \, \bigl| \ind_\Omega(x) - \ind_\Omega(y) \bigr| \di x \di y = 2 \sigma_{n - 1} \int_0^\pi K_\eps(r) \, \sin(r)^{n - 1} \bigl( c_\Omega(0) - c_\Omega(r) \bigr) \di r .
    \end{align}
\end{lemma}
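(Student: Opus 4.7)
The plan is straightforward: reduce the double integral on $M \times M$ to a radial integral by exploiting that $K_\eps$ is radial and that the sphere admits nice polar normal coordinates, and then recognise the resulting expression via Matheron's formula.

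First I would fix $x \in M$ and rewrite the inner integral $\int_M K_\eps(x,y)\,|\ind_\Omega(x) - \ind_\Omega(y)|\di y$ in polar normal coordinates based at $x$, using formula~\eqref{eq:polar-coordinates-2}. Since the cut locus of $x \in S^n$ is the antipodal point $\{-x\}$ and thus has measure zero, this coordinate change loses nothing. Writing $y = \exp_x(rw)$ with $r \in (0,\pi)$ and $w \in S_xM$, we have $d(x,y) = r$, so by \Cref{prop:helmholtz_kernel_properties}b) the kernel becomes $K_\eps(x,y) = K_\eps(r)$. This yields
\begin{align}
    \int_M K_\eps(x,y)\,|\ind_\Omega(x) - \ind_\Omega(y)|\di y
    = \sigma_{n-1} \int_0^\pi K_\eps(r)\,\sin(r)^{n-1} \oint_x \bigl|\ind_\Omega(x) - \ind_\Omega(\exp_x(rw))\bigr|\di w\di r.
\end{align}

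Next I would integrate over $x \in M$ and apply Fubini to pull the radial integral outside, obtaining
\begin{align}
    \int_{M \times M} K_\eps(x,y)\,|\ind_\Omega(x) - \ind_\Omega(y)|\di x\di y
    = \sigma_{n-1} \int_0^\pi K_\eps(r)\,\sin(r)^{n-1}\, \gd{\ind_\Omega}(r)\di r,
\end{align}
where I recognise the inner double integral over $M$ and $S_xM$ as the mean geodesic variation $\gd{\ind_\Omega}(r)$.

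Finally I would invoke Matheron's identity from \Cref{lemma:matheron}, which gives $\gd{\ind_\Omega}(r) = 2\bigl(c_\Omega(0) - c_\Omega(r)\bigr)$. Substituting this in produces exactly the claimed identity. There is no real obstacle here: the only thing to check is that the polar coordinate formula~\eqref{eq:polar-coordinates-2} is legitimately applied (the antipodal cut locus is a null set, so integrating over $S^n \setminus \{-x\}$ instead of $S^n$ introduces no error), and that Fubini is applicable, which follows from non-negativity of $K_\eps$ (\Cref{prop:helmholtz_kernel_properties}c)) together with the fact that the integrand $|\ind_\Omega(x) - \ind_\Omega(y)|$ is bounded by $1$ and $K_\eps \in L^1(M \times M)$ by \Cref{prop:helmholtz_kernel_properties}a).
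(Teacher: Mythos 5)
Your proposal is correct and follows essentially the same route as the paper's own proof: pass to polar normal coordinates via \eqref{eq:polar-coordinates-2}, use Tonelli to interchange the radial and base-point integrations, recognise the mean geodesic variation $\gd{\ind_\Omega}(r)$, and conclude with Matheron's formula (\Cref{lemma:matheron}). The extra remarks on the null cut locus and the justification of the interchange of integrals (non-negativity of the integrand) only make explicit what the paper leaves implicit.
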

\begin{proof}
    Since the Helmholtz kernel only depends on the distance, we introduce polar normal coordinates as in~\eqref{eq:polar-coordinates-2} and apply Tonelli's theorem:
    \begin{align}
        \int_{M \times M} & K_\eps(x,y) \, |\ind_\Omega(x) - \ind_\Omega(y)| \di x \di y
    \\[6pt]
        & = \sigma_{n-1} \int_0^\pi K_\eps(r) \, \sin(r)^{n - 1} \bigg( \int_M \oint_y |\ind_\Omega(\exp_y(rw)) - \ind_\Omega(y)| \di w \di y \bigg) \di r
    \\[6pt]
        & = 2 \sigma_{n-1} \int_0^\pi K_\eps(r) \, \sin(r)^{n - 1} \bigl( c_\Omega(0) - c_\Omega(r) \bigr) \di r,
    \end{align}
    where the last step is Matheron's formula presented in \Cref{lemma:matheron}.
\end{proof}

The reformulation in \Cref{lemma:reformulation_energy_to_autocorrelation1} is essential in proving the limsup inequality in \Cref{thm:gamma_convergence}. We show that the energy functional is pointwise converging to the limit functional, which enables us to use the constant sequence as a recovery sequence.

\begin{proposition} \label{prop:energy_pointwise_convergence}
    Let $\gamma > 0$ and $\Omega \in \AA$. Then
    \begin{align}
        E_{\gamma,\eps}(\Omega) \xrightarrow{\eps \to 0} E_{\gamma,0}(\Omega).
    \end{align}
\end{proposition}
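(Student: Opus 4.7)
The plan is to use \Cref{lemma:reformulation_energy_to_autocorrelation1} to rewrite the non-local term in terms of the autocorrelation function, and then to exploit two ingredients from the previous sections: the pointwise derivative formula $c_\Omega'(0) = -\tfrac{k_n}{2}\Per(\Omega)$ from \Cref{thm:autocorrelation_function_Lipschitz}, which provides the leading-order behaviour of $c_\Omega(0) - c_\Omega(r)$ near $r = 0$, and the moment bounds for $K_\eps$ from \Cref{prop:helmholtz_kernel_properties,prop:convergence_gamma_eps}, which control the concentration of the kernel near the diagonal. Combining \Cref{lemma:reformulation_energy_to_autocorrelation1} with the definition of $E_{\gamma,\eps}$, the task reduces to showing
\begin{align}
    \frac{2\sigma_{n-1}}{\eps} \int_0^\pi K_\eps(r) \sin(r)^{n-1} \bigl( c_\Omega(0) - c_\Omega(r) \bigr) \di r \xrightarrow{\eps\to0} \frac{1}{\gamma_\crit} \Per(\Omega) = \Per(\Omega),
\end{align}
since this would give $E_{\gamma,\eps}(\Omega) \to \Per(\Omega) - \gamma\Per(\Omega) = E_{\gamma,0}(\Omega)$.

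The key steps are as follows. First, I would fix $\eta>0$, use that $c_\Omega$ has a right-sided derivative at $0$ equal to $-\tfrac{k_n}{2}\Per(\Omega)$ (\Cref{thm:autocorrelation_function_Lipschitz}), and choose $\delta > 0$ such that
\begin{align}
    \left\lvert \frac{c_\Omega(0)-c_\Omega(r)}{r} - \frac{k_n}{2}\Per(\Omega) \right\rvert < \eta \qquad\text{for all } 0 < r < \delta.
\end{align}
Second, I would split the integral as $\int_0^\pi = \int_0^\delta + \int_\delta^\pi$. On the near part $(0,\delta)$, writing $c_\Omega(0) - c_\Omega(r) = r \cdot \tfrac{c_\Omega(0)-c_\Omega(r)}{r}$ and pulling out the factor $\tfrac{k_n}{2}\Per(\Omega)$ shows that, up to an error controlled by $\eta$, the near contribution equals
\begin{align}
    k_n\Per(\Omega) \cdot \frac{\sigma_{n-1}}{\eps} \int_0^\delta r\, K_\eps(r) \sin(r)^{n-1} \di r,
\end{align}
which by polar normal coordinates is $k_n\Per(\Omega)$ times the first-moment integral considered in \Cref{prop:convergence_gamma_eps}, and hence tends to $k_n\Per(\Omega) \cdot \tfrac{1}{k_n\gamma_\crit} = \Per(\Omega)$.

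For the far part, I would use the pre-Lipschitz estimate $c_\Omega(0) - c_\Omega(r) \leq c_\Omega(0) = \lvert\Omega\rvert$ together with the uniform second-moment bound $\int_M d(x,y)^2 \eps^{-2} K_\eps(x,y) \di y = \O(1)$ from \Cref{prop:helmholtz_kernel_properties}~d). The simple estimate
\begin{align}
    \frac{\sigma_{n-1}}{\eps} \int_\delta^\pi K_\eps(r) \sin(r)^{n-1} \di r \leq \frac{1}{\delta^2} \cdot \eps \cdot \frac{\sigma_{n-1}}{\eps^2} \int_\delta^\pi r^2 K_\eps(r) \sin(r)^{n-1} \di r = \O(\eps/\delta^2)
\end{align}
shows that the far contribution is $\O(\eps/\delta^2)$ and vanishes as $\eps \to 0$ for fixed $\delta$. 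Combining the two estimates, sending $\eps\to0$ and then $\eta\to0$ yields the desired limit.

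The main obstacle is the interplay between the two limits: the near-field analysis relies on the smallness of $\delta$, while the far-field estimate degrades as $\delta\to 0$. The order of limits, first $\eps\to 0$ at fixed $\delta$, then $\eta\to 0$ (which controls $\delta$), is the structural point to get right. Everything else reduces to manipulations with integrals that have already been carried out in the preliminaries, particularly the identification of the limit of the first moment of $K_\eps$ with $1/k_n$ established inside the proof of \Cref{prop:convergence_gamma_eps}.
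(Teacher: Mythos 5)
Your proposal is correct, and it reaches the limit by a somewhat different mechanism than the paper. Both arguments start from \Cref{lemma:reformulation_energy_to_autocorrelation1} and split the radial integral into a near and a far region, but the paper chooses the splitting radius so that the \emph{tail of the first moment} of $K_\eps$ is small, bounds the far part by the Lipschitz estimate $c_\Omega(0)-c_\Omega(r)\le \tfrac{k_n}{2}\Per(\Omega)\,r$, and then evaluates the near part by inserting the explicit Ferrers/Bessel asymptotics of $K_\eps$ from the proof of \Cref{prop:convergence_gamma_eps}, rescaling $r\mapsto r/\tau$, and applying dominated convergence with the difference quotient $\tfrac{c_\Omega(0)-c_\Omega(r/\tau)}{r/\tau}\to -c'_\Omega(0)$ inside the integral. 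You instead choose the splitting radius $\delta$ from the existence of the right-sided derivative (so the difference quotient is uniformly $\eta$-close to $\tfrac{k_n}{2}\Per(\Omega)$ on $(0,\delta)$), factor it out, and reduce the near part to the first moment of $K_\eps$, while the far part is killed by the crude bound $c_\Omega(0)-c_\Omega(r)\le|\Omega|$ together with the uniform second-moment bound, giving $\O(\eps/\delta^2)$. This is more modular: within this proof you never touch the explicit kernel representation, only the moment bounds of \Cref{prop:helmholtz_kernel_properties}~d) and the first-moment limit. The one point you should make explicit is that the \emph{truncated} first moment $\tfrac{\sigma_{n-1}}{\eps}\int_0^\delta r\,K_\eps(r)\sin(r)^{n-1}\di r$ has the same limit $\tfrac{1}{k_n\gamma_\crit}$ as the full moment appearing in \eqref{eq:defi_gamma_crit}; this follows either from the computation inside the proof of \Cref{prop:convergence_gamma_eps} (which is carried out on $B_\rho(y)$ for fixed $\rho$), or self-containedly from your own second-moment trick, since on $(\delta,\pi)$ one has $\tfrac{r}{\eps}\le\tfrac{\eps}{\delta}\tfrac{r^2}{\eps^2}$, so the discarded tail of the first moment is $\O(\eps/\delta)$. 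With that remark added, the order of limits you describe (first $\eps\to0$ at fixed $\delta$, then $\eta\to0$) closes the argument, and the constants check out: the near part converges to $2\cdot\tfrac{k_n}{2}\Per(\Omega)\cdot\tfrac{1}{k_n}=\Per(\Omega)$ up to an $\O(\eta)$ error, which matches $E_{\gamma,0}(\Omega)=(1-\gamma)\Per(\Omega)$ since $\gamma_\crit=1$.
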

\begin{proof}
    It suffices to prove convergence of the non-local term. Let $0 < \eps < \eps_0 \coloneqq \frac{2}{n - 1}$. Let $\delta > 0$ and $y \in M$. Using \Cref{prop:helmholtz_kernel_properties} we find $\rho > 0$ such that
    \begin{align} \label{eq:lemma:moment_by_delta}
        \int_{M \setminus B_\rho(y)} \frac{d(x,y)}{\eps} K_\eps(x,y) \di x < \delta && \text{for all } 0 < \eps < \eps_0.
    \end{align}
    Using \Cref{lemma:reformulation_energy_to_autocorrelation1}, we find after switching to polar normal coordinates (see \eqref{eq:polar-coordinates-2})
    \begin{align}
        \frac{1}{\eps} \int_{M \times M} K_\eps(x,y) \, \bigl| \ind_\Omega(x) - \ind_\Omega(y) \bigr| \di x \di y 
        &= \frac{2 \sigma_{n - 1}}{\eps} \int_0^\rho K_\eps(r) \sin(r)^{n - 1} \, \bigl( c_\Omega(0) - c_\Omega(r) \bigr) \di r  \\[6pt]
        &\phantom{={}}\ + 2 \sigma_{n - 1}\int_\rho^\pi \frac{r}{\eps} K_\eps(r) \sin(r)^{n - 1} \, \frac{c_\Omega(0) - c_\Omega(r)}{r} \di r.
    \end{align}
    Using the Lipschitz continuity of the autocorrelation function with corresponding Lipschitz constant $\frac{k_n}{2} \Per(\Omega)$ (see \Cref{thm:autocorrelation_function_Lipschitz}), as well as \eqref{eq:lemma:moment_by_delta}, the second integral can be uniformly estimated as follows:
    \begin{align}
        \bigg| 2 \sigma_{n - 1}\int_\rho^\pi \frac{r}{\eps} K_\eps(r) \, \sin(r)^{n - 1} \, \frac{c_\Omega(0) - c_\Omega(r)}{r} \di r \, \bigg| \leq \delta \, k_n \Per(\Omega)   && \text{for all } 0 < \eps < \eps_0.
    \end{align}
    For the other integrand, with the notation set in the proof of \Cref{prop:convergence_gamma_eps}, we rewrite 
    \begin{align}
        \frac{2 \sigma_{n - 1}}{\eps} & \int_0^\rho K_\eps(r) \sin(r)^{n - 1} \, \bigl( c_\Omega(0) - c_\Omega(r) \bigr) \di r
    \\[6pt]
        & = \frac{2 \sigma_{n-1}}{(2 \pi)^{\mu+1}} 
        \int_0^{\rho \tau} r^{\frac n2 + 1} \ \bigg(\frac{\sin(r /\tau)}{r/\tau} \bigg)^\frac{n-1}2 \bessel_\mu(r) \, \frac{c_\Omega(0) - c_\Omega(r/\tau)}{r/\tau} 
        \di r \, \big(1 + \O(\tfrac1\tau)\big)
    \\[6pt]
        & \xrightarrow{\eps \to 0} \frac{2 \sigma_{n-1}}{(2 \pi)^{\mu+1}} 
        \int_0^\infty r^{\frac n2 + 1} \bessel_\mu(r) \, \bigl( -c'_\Omega(0) \bigr) \di r = \frac{1}{\gamma_\crit} \Per(\Omega),
    \end{align}
    where in the last line we used that $c_\Omega$ is Lipschitz continuous as well as the formula for $c'_\Omega(0)$ (see \Cref{thm:autocorrelation_function_Lipschitz}), together with the dominated convergence theorem.
    Since $\delta > 0$ was arbitrary, the claim follows.
\end{proof}

Using \Cref{lemma:reformulation_energy_to_autocorrelation1}, we find yet another reformulation of $E_{\gamma,\eps}$ in terms of the autocorrelation function using integration by parts. To do so, we recall the integrated kernel $\Phi_\eps \colon (0,\pi) \longrightarrow \IR$ in \eqref{eq:defi_Phi_eps}, which is given by
    \begin{align}
        \Phi_\eps(r) \coloneqq \frac {\sigma_{n - 1}}\eps \int_r^{\pi} K_\eps(r) \, \sin(r)^{n - 1} \di r && \text{for all } r \in (0,\pi).
    \end{align}
The following representation quantifies the error of $E_{\gamma,\eps}$ from the limit functional $E_{\gamma,0}$ in terms of the integrated kernel $\Phi_\eps$.

\begin{lemma} \label{lemma:reformulation_energy_to_autocorrelation2}
    Let $\gamma > 0$ and $\Omega \subset M$ a set of finite perimeter. Then
    \begin{align}
        E_{\gamma,\eps}(\Omega) = \Big(1 - \frac{\gamma}{\gamma_\eps} \Big) \Per(\Omega) + 2\gamma \int_0^\pi \Phi_\eps(r) \, \bigl( c'_\Omega(r) - c'_\Omega(0) \bigr) \di r .
    \end{align}
\end{lemma}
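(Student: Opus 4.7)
The plan is to start from the autocorrelation reformulation of the non-local term already established in \Cref{lemma:reformulation_energy_to_autocorrelation1}, then perform integration by parts against the primitive $\Phi_\eps$, and finally recognise the leading-order contribution using the known values $c_\Omega'(0) = -\tfrac{k_n}{2}\Per(\Omega)$ and $\|\Phi_\eps\|_{L^1} = \frac{1}{k_n\gamma_\eps}$.

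First I would substitute the formula from \Cref{lemma:reformulation_energy_to_autocorrelation1} into $E_{\gamma,\eps}(\Omega)$ to obtain
\begin{align}
    E_{\gamma,\eps}(\Omega) = \Per(\Omega) - 2\gamma \int_0^\pi \frac{\sigma_{n-1}}{\eps} K_\eps(r) \sin(r)^{n-1} \bigl(c_\Omega(0) - c_\Omega(r)\bigr) \di r.
\end{align}
By the definition \eqref{eq:defi_Phi_eps} and the fundamental theorem of calculus, $\Phi_\eps$ is absolutely continuous on $[0,\pi]$ with $\Phi_\eps'(r) = -\tfrac{\sigma_{n-1}}{\eps}K_\eps(r)\sin(r)^{n-1}$. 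Moreover, $\Omega$ has finite perimeter, so by \Cref{thm:autocorrelation_function_Lipschitz} the autocorrelation $c_\Omega$ is Lipschitz, hence absolutely continuous. Both factors therefore admit integration by parts.

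Next I would integrate by parts in the integral above. Using $\Phi_\eps(\pi) = 0$ from \Cref{prop:properties_Phi_eps}\,b) and the fact that $c_\Omega(0) - c_\Omega(r)$ vanishes at $r=0$, both boundary terms drop out, yielding
\begin{align}
    \int_0^\pi \frac{\sigma_{n-1}}{\eps} K_\eps(r) \sin(r)^{n-1} \bigl(c_\Omega(0) - c_\Omega(r)\bigr) \di r
    = -\int_0^\pi \Phi_\eps(r) \, c_\Omega'(r) \di r.
\end{align}
Therefore
\begin{align}
    E_{\gamma,\eps}(\Omega) = \Per(\Omega) + 2\gamma \int_0^\pi \Phi_\eps(r) \, c_\Omega'(r) \di r.
\end{align}

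Finally, I would write $c_\Omega'(r) = \bigl(c_\Omega'(r) - c_\Omega'(0)\bigr) + c_\Omega'(0)$ inside the integral and split the contribution. For the constant piece, \Cref{thm:autocorrelation_function_Lipschitz} gives $c_\Omega'(0) = -\tfrac{k_n}{2}\Per(\Omega)$, while \Cref{prop:properties_Phi_eps}\,c) gives $\int_0^\pi \Phi_\eps(r)\di r = \tfrac{1}{k_n\gamma_\eps}$, so that
\begin{align}
    2\gamma\, c_\Omega'(0) \int_0^\pi \Phi_\eps(r)\di r = 2\gamma \cdot \Bigl(-\tfrac{k_n}{2}\Per(\Omega)\Bigr)\cdot \tfrac{1}{k_n\gamma_\eps} = -\tfrac{\gamma}{\gamma_\eps}\Per(\Omega),
\end{align}
which combines with the $\Per(\Omega)$ term to produce the prefactor $1 - \gamma/\gamma_\eps$, leaving the error term in the stated form.

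The only real subtlety is the justification of integration by parts, since $c_\Omega$ is merely Lipschitz rather than $C^1$. This is handled by the Lipschitz-to-absolutely-continuous implication (which lets one apply the standard IBP formula for AC functions) together with $\Phi_\eps \in C^0([0,\pi])$ with $\Phi_\eps \in C^1((0,\pi])$ from $K_\eps \in C^\infty$ off the diagonal; the vanishing of both boundary terms is the decisive structural input.
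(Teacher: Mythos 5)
Your proposal is correct and follows essentially the same route as the paper: substitute \Cref{lemma:reformulation_energy_to_autocorrelation1}, integrate by parts against $\Phi_\eps$ (with both boundary terms vanishing via $\Phi_\eps(\pi)=0$ and $c_\Omega(0)-c_\Omega(0)=0$), then add and subtract $c'_\Omega(0)$ and use $c'_\Omega(0)=-\tfrac{k_n}{2}\Per(\Omega)$ together with $\|\Phi_\eps\|_{L^1}=\tfrac{1}{k_n\gamma_\eps}$. Your extra care in justifying the integration by parts for the merely Lipschitz $c_\Omega$ is a welcome refinement of the paper's argument, not a deviation from it.
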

\begin{proof}
    Since $\Omega$ is a set of finite perimeter, we find that $c_\Omega$ is Lipschitz (see \Cref{thm:autocorrelation_function_Lipschitz}). Using \Cref{lemma:reformulation_energy_to_autocorrelation1} and performing an integration by parts yields
    \begin{align}
        \begin{multlined}
        \frac{\sigma_{n-1}}{\eps}\int_0^\pi K_\eps(r) \, \sin(r)^{n - 1} \, \bigl( c_\Omega(0) - c_\Omega(r) \bigr) \di r \\[6pt]
            = -\bigg[\Phi_\eps(r) \, \bigl( c_\Omega(0) - c_\Omega(r) \bigr) \bigg]_0^\pi - \int_0^\pi \Phi_\eps(r) \, c'_\Omega(r) \di r 
            = - \int_0^\pi \Phi_\eps(r) \, c'_\Omega(r) \di r,
        \end{multlined}
    \end{align}
    where in the last step \Cref{prop:properties_Phi_eps} is used. Using again \Cref{thm:autocorrelation_function_Lipschitz} we further obtain
    \begin{align}
        \int_0^\pi \Phi_\eps(r) \, c'_\Omega(r) \di r
        = \int_0^\pi \Phi_\eps(r) \, \bigl( c'_\Omega(r) - c'_\Omega(0) \bigr) \di r - \frac{k_n}{2} \|\Phi_\eps\|_{L^1(I)} \Per(\Omega).
    \end{align}
    In total we get
    \begin{align}
        E_{\gamma,\eps}(\Omega) = \Per(\Omega) + 2 \gamma \int_0^\pi \Phi_\eps(r) \bigl( c'_\Omega(r) - c'_\Omega(0) \bigr) \di r - \frac{\gamma}{\gamma_\eps} \Per(\Omega),
    \end{align}
    which was the claim.
\end{proof}

With the reformulation in \Cref{lemma:reformulation_energy_to_autocorrelation2} at hand, we formulate two simple corollaries. The first one is a pointwise lower bound of the energy in terms of the perimeter functional. It is the crucial step in bounding the energy optimally from below, which is a result of the fine bounds obtained for the autocorrelation function. This pointwise lower bound is used to obtain the liminf inequality in \Cref{thm:gamma_convergence}.

\begin{corollary} \label{cor:pointwise_lower_bound}
    Let $\gamma, \eps > 0$ and $\Omega \in \AA$. Then 
        \begin{align}
            \Big(1 - \frac{\gamma}{\gamma_\eps} \Big) \Per(\Omega) \leq E_{\gamma,\eps}(\Omega).
        \end{align}
\end{corollary}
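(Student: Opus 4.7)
The plan is to read the inequality off Lemma \ref{lemma:reformulation_energy_to_autocorrelation2}, which, since $\Omega \in \AA$ has finite perimeter, already furnishes the decomposition
\begin{align}
    E_{\gamma,\eps}(\Omega) = \Big(1 - \frac{\gamma}{\gamma_\eps}\Big)\Per(\Omega) + 2\gamma \int_0^\pi \Phi_\eps(r)\bigl(c'_\Omega(r) - c'_\Omega(0)\bigr)\di r.
\end{align}
The claimed pointwise lower bound is therefore equivalent to the nonnegativity of the integral correction term on the right.

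To establish this nonnegativity, I would handle the two factors inside the integrand separately. The weight $\Phi_\eps$ is nonnegative by \Cref{prop:properties_Phi_eps}a), and $\gamma>0$ by assumption, so it remains to show $c'_\Omega(r)-c'_\Omega(0)\ge 0$ for almost every $r\in(0,\pi)$. For this I would invoke \Cref{thm:autocorrelation_function_Lipschitz}, which gives the sharp identity
\begin{align}
    c'_\Omega(0) = -\norm{c'_\Omega}_{L^\infty(\IR_{\ge 0})} = -\tfrac{k_n}{2}\Per(\Omega).
\end{align}
In other words, $c'_\Omega(0)$ coincides with the essential infimum of $c'_\Omega$ on $\IR_{\ge 0}$, so that $c'_\Omega(r)\ge c'_\Omega(0)$ for almost every $r$. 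Combining the two observations, the integrand is a.e.\ nonnegative, the integral is nonnegative, and the asserted inequality follows.

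There is no genuine obstacle here: the entire content of the corollary is packaged in the sharpness of the Lipschitz bound in \Cref{thm:autocorrelation_function_Lipschitz}, which identifies $c'_\Omega(0)$ with the essential minimum of $c'_\Omega$. Once that fact is in hand, the estimate is a one-line consequence of the integral reformulation in \Cref{lemma:reformulation_energy_to_autocorrelation2}.
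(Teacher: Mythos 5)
Your proposal is correct and follows essentially the same route as the paper: the paper likewise combines the decomposition of \Cref{lemma:reformulation_energy_to_autocorrelation2} with $\Phi_\eps \ge 0$ from \Cref{prop:properties_Phi_eps} and the a.e.\ bound $c'_\Omega(r) \ge c'_\Omega(0)$ furnished by \Cref{thm:autocorrelation_function_Lipschitz}. Nothing is missing.
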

    \begin{proof}
        Let $\Omega \in \AA$. Using \Cref{thm:autocorrelation_function_Lipschitz} we find that $c'(r) \geq c'(0)$ for almost all $r > 0$. Using \Cref{prop:properties_Phi_eps}a) and \Cref{lemma:reformulation_energy_to_autocorrelation2} we find
            \begin{align}
                E_{\gamma,\eps}(\Omega) = \Big(1 - \frac{\gamma}{\gamma_\eps} \Big) \Per(\Omega) + 2\gamma \int_0^\pi \Phi_\eps(r) \, \bigl( c_\Omega'(r) - c_\Omega'(0) \bigr) \di r \geq \Big(1 - \frac{\gamma}{\gamma_\eps} \Big) \Per(\Omega)
            \end{align}
        as claimed.
    \end{proof}

In addition to the pointwise lower bound, another corollary of the reformulation in \Cref{lemma:reformulation_energy_to_autocorrelation2} is the $L^1$-compactness of the energy functional in the subcritical regime.

\begin{corollary} \label{cor:compactness}
    Let $0 < \gamma < \gamma_\crit$. Let $\Omega_\eps \subset M$ measurable such that $\limsup_{\eps \to 0} E_{\gamma,\eps}(\Omega_\eps) < \infty$. Then there exists $\Omega \in \AA$ and a subsequence (not relabeled) such that $|\Omega \bigtriangleup \Omega_\eps| \xrightarrow{\eps \to 0} 0$.
\end{corollary}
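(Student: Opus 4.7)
The idea is to extract a uniform $BV$-bound on the characteristic functions $\ind_{\Omega_\eps}$ from the energy hypothesis, and then invoke the Rellich-type compactness already recorded in \Cref{prop:bv_compact}. Since $\limsup_{\eps\to 0} E_{\gamma,\eps}(\Omega_\eps) < \infty$, the very definition of $E_{\gamma,\eps}$ forces $\Omega_\eps \in \AA$ for all sufficiently small $\eps > 0$; in particular $|\Omega_\eps| = \theta$ and $\ind_{\Omega_\eps} \in BV(M)$. Combining this with the pointwise lower bound \Cref{cor:pointwise_lower_bound} gives
\begin{align}
    \Bigl( 1 - \tfrac{\gamma}{\gamma_\eps} \Bigr) \Per(\Omega_\eps) \le E_{\gamma,\eps}(\Omega_\eps).
\end{align}
By \Cref{prop:convergence_gamma_eps}, $\gamma_\eps \to \gamma_\crit = 1$ as $\eps \to 0$, and since $\gamma < \gamma_\crit$ there exist $\eps_0 > 0$ and $c > 0$ such that $1 - \gamma/\gamma_\eps \ge c$ for all $0 < \eps < \eps_0$. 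Together with the assumed uniform bound on the energy this yields $\sup_{\eps<\eps_0} \Per(\Omega_\eps) < \infty$.

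Since the $L^1$-norms $\|\ind_{\Omega_\eps}\|_{L^1(M)} = \theta$ are trivially bounded, we obtain a uniform bound on $\|\ind_{\Omega_\eps}\|_{BV(M)}$. Applying \Cref{prop:bv_compact} produces some $f \in BV(M)$ and a (non-relabelled) subsequence with $\ind_{\Omega_\eps} \to f$ in $L^1(M)$. After passing to a further almost-everywhere convergent subsequence, the limit $f$ takes values in $\{0,1\}$ a.e., so $f = \ind_\Omega$ for a measurable set $\Omega \subset M$. The $L^1$-convergence preserves the volume constraint, giving $|\Omega| = \theta$, and \Cref{prop:bv_lower_semicontinuous} yields $\Per(\Omega) < \infty$, so $\Omega \in \AA$. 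Finally, rewriting the $L^1$-convergence in terms of symmetric differences gives $|\Omega \bigtriangleup \Omega_\eps| = \|\ind_\Omega - \ind_{\Omega_\eps}\|_{L^1(M)} \to 0$, as claimed.

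The only step requiring attention is the first: we need the coefficient in front of $\Per(\Omega_\eps)$ to remain uniformly positive as $\eps \to 0$. This is exactly where the subcriticality assumption $\gamma < \gamma_\crit$ enters, through the explicit convergence $\gamma_\eps \to 1$ established in \Cref{prop:convergence_gamma_eps}; without this convergence the subcriticality alone would not suffice to rule out the coefficient degenerating along a subsequence. The remaining steps are standard consequences of compactness in $BV$ on compact Riemannian manifolds and closedness of $\{0,1\}$-valued functions under $L^1$-convergence.
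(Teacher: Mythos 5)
Your proposal is correct and follows essentially the same route as the paper: reduce to $\Omega_\eps \in \AA$, use \Cref{cor:pointwise_lower_bound} together with $\gamma_\eps \to \gamma_\crit$ to get a uniform perimeter bound, and conclude via the $BV$ compactness of \Cref{prop:bv_compact} and identification of the limit as a characteristic function with the right volume. Your argument even spells out a point the paper leaves implicit, namely that $1 - \gamma/\gamma_\eps$ stays uniformly positive for small $\eps$.
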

\begin{proof}
    From the energy bound we can assume, without loss of generality, that $\Omega_\eps \in \AA$ for all $\eps > 0$.
    Since $0 < \gamma < \gamma_\crit$, it follows from \Cref{cor:pointwise_lower_bound} that, for $\eps$ sufficiently small,
        \begin{align}
            0 \leq \Big(1 - \frac{\gamma}{\gamma_\eps} \Big) \Per(\Omega_\eps) \leq E_{\gamma,\eps}(\Omega_\eps).
        \end{align}
    Thus $\Per(\Omega_\eps)$ is uniformly bounded as $\eps \to 0$. Since $|\Omega_\eps| = \theta$ for all $\eps > 0$, the claim follows from the compact embedding of $BV(M)$ into $L^1(M)$ (see \Cref{prop:bv_compact}). The limit is again a set of finite perimeter since, after going to a subsequence, the pointwise limit of characteristic functions is again a characteristic function.
\end{proof}

We are now in the position to prove the main result \Cref{thm:gamma_convergence}.

\medskip

\begin{proof}[Proof of \Cref{thm:gamma_convergence}:]
    We show the liminf and limsup inequality seperately.
    For the limsup inequality, we use the constant sequence $\Omega_\eps = \Omega$ as a recovery sequence.
    Without loss of generality, we may assume that $\Omega \in \AA$ since otherwise the limsup inequality is trivial.
    The claim then follows from \Cref{prop:energy_pointwise_convergence}.

    \medskip

    To show the liminf inequality, let $\Omega_\eps, \Omega \subset M$ such that $|\Omega_\eps \bigtriangleup \Omega| \xrightarrow{\eps \to 0} 0$. If $\liminf_{\eps \to 0} E_{\gamma, \eps}(\Omega_\eps) = +\infty$, then the liminf inequality is trivially satisfied. Otherwise we obtain from \Cref{cor:compactness} that $\Omega \in \AA$. We find using the lower-semicontinuity of the perimeter (see \Cref{prop:bv_lower_semicontinuous}) and the super-multiplicativity of the limit inferior
    \begin{align}
        E_{\gamma,0}(\Omega)    & \leq \biggl( \lim_{\eps \to 0} \Bigl( 1 - \frac{\gamma}{\gamma_\eps} \Bigr) \biggr) \biggl( \liminf_{\eps \to 0} \Per(\Omega_\eps) \biggr) \\[6pt]
            & \leq \liminf_{\eps \to 0} \biggl( \Bigl(1 - \frac{\gamma}{\gamma_\eps} \Bigr) \Per(\Omega_\eps) \biggr) \\[6pt]
            & \leq \liminf_{\eps \to 0} E_{\gamma,\eps}(\Omega_\eps),
    \end{align}
    where in the last line we used \Cref{cor:pointwise_lower_bound}.
\end{proof}

\bibliographystyle{alpha}
\bibliography{bibliography}

\end{document}